\documentclass[12pt]{article}
\usepackage{mathtools}
\usepackage{comment}
\usepackage{amsopn,amsmath,amssymb,amsthm}
\usepackage[dvipsnames]{xcolor}
\usepackage[left=1.5cm,right=1.5cm,top=1.5cm,bottom=2cm]{geometry}
\usepackage{subcaption}
\usepackage[pdfencoding=unicode, psdextra, citecolor=blue, urlcolor=blue,
  linkcolor=blue, colorlinks=true, bookmarksopen=true]{hyperref}
\usepackage{graphicx}
\usepackage{csquotes}
\usepackage{enumerate}
\usepackage[normalem]{ulem}
%

\numberwithin{equation}{section}
\numberwithin{figure}{section}

\newcommand{\R}{\mathbb{R}}
\newcommand{\Z}{\mathbb{Z}}
\newcommand{\const}{\mathrm{const}}
\newcommand{\polar}{\circ}

\DeclareMathOperator{\argmax}{\mathrm{arg\,max}}

\newcommand{\ii}{\mathrm{\bf i}}
\newcommand{\jj}{\mathrm{\bf j}}
\newcommand{\kk}{\mathrm{\bf k}}

\usepackage{cite}

\def\h{\mathfrak{h}}
\def\sl{\mathfrak{sl}}
\def\su{\mathfrak{su}}
\def\se{\mathfrak{se}}
\def\sh{\mathfrak{sh}}
\def\H{\mathrm{H}}
\def\SL{\mathrm{SL}}
\def\SU{\mathrm{SU}}
\def\SE{\mathrm{SE}}
\def\SH{\mathrm{SH}}

\newcommand{\En}{\mathbb{E}}
\newcommand{\Cn}{\mathcal{C}}
\newcommand{\Un}{\mathcal{U}}
\newcommand{\Sn}{\mathbb{S}}
\def\then{\Rightarrow}
\def\co{\cos_{\Omega}}
\def\so{\sin_{\Omega}}
\def\coo{\cos_{\Omega^{\polar}}}
\def\soo{\sin_{\Omega^{\polar}}}
\def\tho{\theta^{\polar}}
\def\So{\Sn^{\polar}}
\DeclareMathOperator{\Id}{Id}
\DeclareMathOperator{\intt}{int}
\DeclareMathOperator{\spann}{span}
\newcommand{\eq}[1]{$(\protect\ref{#1})$}
\newcommand{\be}[1]{\begin{equation}\label{#1}}
\newcommand{\ee}{\end{equation}}
\DeclareMathOperator{\sgn}{sgn}
\DeclareMathOperator{\conv}{conv}
\def\om{\omega}
\def\a{\alpha}
\def\b{\beta}
\def\lam{\lambda}
\DeclareMathOperator{\Sing}{\mathfrak{S}}

\newcommand{\twofiglabel}[6]
{
\begin{figure}[htbp]
\includegraphics[width=0.47\textwidth]{#1}
\hfill
\includegraphics[width=0.47\textwidth]{#4}
\\
\parbox[t]{0.49\textwidth}{\caption{#2}\label{#3}}
\hfill
\parbox[t]{0.49\textwidth}{\caption{#5}\label{#6}}
\end{figure}
}

\newcommand{\twofiglabels}[8]
{
\begin{figure}[htbp]
\parbox[t]{0.49\textwidth}{
\begin{center}
\includegraphics[width=#7\textwidth]{#1}
\end{center}
}
\hfill
\parbox[t]{0.49\textwidth}{
\begin{center}
\includegraphics[width=#8\textwidth]{#4}
\end{center}
}
\\
\parbox[t]{0.495\textwidth}{\caption{#2}\label{#3}}
\hfill
\parbox[t]{0.495\textwidth}{\caption{#5}\label{#6}}
\end{figure}
}

\sloppy

\newcommand*{\arxiv}[1]{\href{http://arxiv.org/abs/#1}{arXiv: #1}}

\newcommand{\N}{\mathbb{N}}
\theoremstyle{plain}
\newtheorem{thm}{Theorem}[section]
\newtheorem{prop}{Proposition}[section]
\newtheorem{corollary}{Corollary}[section]

\theoremstyle{definition}
\newtheorem{defn}{Definition}[section]

\newtheorem{remark}{Remark}[section]


\begin{document}
\title{Extremals for a series of sub-Finsler problems with 2-dimensional control via convex trigonometry\thanks{Section 6 was written by A.A. Ardentov.  Sections 1--3, 5, 7 were written by L.V. Lokutsievskiy. Section 4 was written by Yu.L. Sachkov. All results in this paper are products of authors collaborative work. The work of A.A. Ardentov is supported by the Russian Science Foundation under grant 17-11-01387-p and performed in Ailamazyan Program Systems Institute of Russian Academy of Sciences. The work of L.V. Lokutsievskiy is supported by the Russian Science Foundation under grant 20-11-20169 and performed in Steklov Mathematical Institute of Russian Academy of Sciences. The work of Yu.L. Sachkov is supported by the Russian Foundation for Basic Research, project number 19-31-51023.}}
\author{A.A.~Ardentov\thanks{Ailamazyan Program Systems Institute, Russian Academy of Sciences, Pereslavl-Zalessky, Russia. E-mail: aaa@pereslavl.ru}, L.V.~Lokutsievskiy\thanks{Steklov Mathematical Institute of Russian Academy of Sciences, Moscow, Russia. E-mail: lion.lokut@gmail.com}, Yu.L.~Sachkov\thanks{Sirius University of Science and Technology (Sochi, Russia) and Ailamazyan Program Systems Institute of Russian Academy of Sciences (Pereslavl-Zalessky, Russia).  E-mail: yusachkov@gmail.com}}
\date{26.04.2020}

\maketitle

\begin{abstract} 
We consider a series of optimal control problems with 2-dimensional control lying in an arbitrary convex compact set~$\Omega$. The considered problems are well studied for the case when $\Omega$ is a unit disc, but barely studied for arbitrary $\Omega$. We derive extremals to these problems in general case by using machinery of convex trigonometry, which allows us to do this identically and independently on the shape of~$\Omega$. The paper  {describes geodesics in } (i)  the Finsler problem on the Lobachevsky hyperbolic plane; (ii) left-invariant sub-Finsler  {problems} on all unimodular 3D Lie groups ($\SU(2)$, $\SL(2)$, $\SE(2)$, $\SH(2)$); (iii) the problem of rolling ball on a plane with distance function given by~$\Omega$;  (iv) a series of ``yacht problems'' generalizing Euler's elastic problem, Markov-Dubins problem, Reeds-Shepp problem and a new sub-Riemannian problem on $\SE(2)$; and (v)  {the} plane dynamic motion {problem}.
\end{abstract}

\section*{Introduction}

In this paper, we obtain extremals in a series of optimal control problems with two-dimensional control lying in a compact convex set $\Omega\subset\mathbb{R}^2$, $0\in\mathrm{int}\,\Omega$. Usually Pontryagin maximum principle dictates an optimal control to move along the boundary $\partial\Omega$. In the case, when $\partial\Omega$ is the unit circle, this motion can be conveniently described by the trigonometric functions $\cos$ and $\sin$. This allows to find explicit formulas for extremals in a lot of optimal control problems with $\Omega$ being the unit disc, which in fact is very useful for any deep investigation of a problem. In the case, when $\Omega$ has an arbitrary form, finding formulas for extremals is much more difficult task also because extremals depend on the structure of $\Omega$. Hence, a lot of important problems (which are very well studied when $\Omega$ is the unit disc) are barely investigated or even not investigated at all if $\Omega$ has some different shape. It seems to us, it happens because of lack of a convenient way to describe motions of the optimal control along the boundary of an arbitrary flat convex set.

In the present paper, we use new functions $\cos_\Omega$ and $\sin_\Omega$ (which were introduced in~\cite{CT1}) to describe this motion. These new functions inherit a lot of convenient properties from the classical functions $\cos$ and $\sin$ (we usually use the term ``convex trigonometry'' for this collection of properties). On the one hand, convex trigonometry allows us to derive short formulas for extremals in terms of {$\cos_\Omega$ and $\sin_\Omega$} in a series of optimal control problems. On the other hand, the functions $\cos_\Omega$ and $\sin_\Omega$ can be expressed by classic functions for a lot of concrete sets $\Omega$ (see {Subsec.~\ref{subsec:explicit_sets}} for the detailed list {of} the sets). For example, if $\Omega=\{|x|^p+|y|^p\le 1\}$, $1\le p\le \infty$, the functions $\cos_\Omega$ and $\sin_\Omega$ coincide with well known Shelupsky's functions and can be expressed in terms of Euler beta function or hypergeometric function $_2F_1$.

So the following new optimal control problems are investigated in the present paper.

\begin{enumerate}
	
	\item In Sec.~\ref{sec:lobachavsky}, we find Finsler geodesics on the Lobachevsky hyperbolic plane. This problem is equivalent to a left-invariant Finsler problem on the group $\mathrm{Aff}(\mathbb{R})$ of  proper affine transformations of the real line $\mathbb{R}$. For centrally symmetrical sets $\Omega$, this problem was investigated in \cite{Gribanova}. In the present paper, we find all geodesics for all possible sets $\Omega$. The result is obtained by convex trigonometry, but the final answer does not contain the functions $\cos_\Omega$ and $\sin_\Omega$ at all (see Theorem~\ref{thm:FinslerLobachevsky}).
	
	\item 
Left-invariant  sub-Finsler problems on 3D unimodular Lie groups
	considered in Sec.~\ref{sec:3DLie} are a natural generalization of similar sub-Riemannian problems (recall that a problem is sub-Riemannian if $\Omega$ is an ellipse centered at the origin). Sub-Riemannian geodesics on all these groups can be parameterized by elliptic functions, see~\cite[Sec.~18]{ABB}. Moreover, in some of these problems, an optimal synthesis is known (see \cite{versh_gersh} for the Heisenberg group $\H_3$, \cite{berestovsky, boscain_rossi} for some $\mathrm{SO}(3)$ and $\SL(2)$ cases, \cite{max_sre, cut_sre1, cut_sre2} for $\SE(2)$, and \cite{sh21, sh22, sh23} for $\SH(2)$). Left invariant sub-Riemannian and sub-Finsler problems are of great importance for control theory and mathematics in general~\cite{ABB, SRG, b1, Breuillard-LeDonne1, montgomery}, and arise in a lot of practical applications, see e.g.~\cite{SCP, BDRS, boscain3level, Jurdjevic}. 
Sub-Finsler problems on $\SE(2)$ take important role for building control models of wheeled robots, e.g. famous Reeds-Shepp car~\cite{reeds-shepp} corresponds to $l_\infty$ case (see~Subsection~\ref{subsec:li}, Case~4), shortest paths for differential-drive mobile robot with the restriction on wheel-rotation speed~\cite{bvdr,chitsaz2009} are equavalent to sub-Finsler minimizers on $\SE(2)$ with $l_1$ norm (see~Subsection~\ref{subsec:l1}, Case~4). The~generalization of such models on polygonal and strictly convex smooth boundaries is explored in~\cite{bvmb}.	
Nonetheless, for sub-Finsler problems (when $\Omega$ is arbitrary), only geodesics on the Heisenberg group $\H_3$ are known (see \cite{buseman,berestovskiiHeisenberg}). In the present paper, for arbitrary $\Omega$, we find solutions to the vertical subsystem of the Pontryagin maximum principle in terms of convex trigonometry (see Sec.~\ref{sec:3DLie}). For example, this allows us to fully describe the phase portrait of the system and give a full description of singular extremals and their types in the $\ell_p$ case (see Subsecs.~\ref{subsec:li}--\ref{subsec:lp}).

	\item In Sec.~\ref{sec:rolling_ball}, we consider a sphere rolling on a plane without slipping or twisting. The problem is to find a rolling of the sphere from one given position to another such that the center draws a shortest possible path and the sphere rotates in a given way. In the classical case, when the path length is measured by the Euclidean distance, the problem is well studied (see \cite{Jurdjevic,JurdjevicBalls2}). For example, it is known that optimal motions of the center coincide with Euler elasticae. In Sec.~\ref{sec:rolling_ball}, we investigate an unsolved problem when the path length of the sphere center is measured by an arbitrary sub-Finsler metric on the plane. We find solutions to the vertical subsystem of the Pontryagin maximum principle in terms of convex trigonometry. In particular, this allows us to describe behavior of optimal sphere motions when the sub-Finsler metric is piecewise linear (see Theorem~\ref{thm:sphere_polygon}).
	
	\item In Sec.~\ref{sec:yachts}, we define a series of optimal control problems generalizing four classic problems (Euler elasticae, Dubins car, Reeds-Shepp car) and a new sub-Riemannian problem on $\SE(2)$, which express control models for a car-like robot. All the listed  problems were  investigated in literature (see \cite{euler} for Euler elasticae, \cite{dubins} for Dubins car, \cite{reeds-shepp} for Reeds-Shepp car, {\cite{max_sre, cut_sre1, cut_sre2} for the sub-Riemannian problem on $\SE(2)$}). We use arbitrary convex set $\Omega$ of unit velocities on the plane to generalize those models as yacht models moving in a sea (or a car moving on a non-horizontal plane).\footnote{Yachts problems are very popular applications of control theory nowadays (see \cite{yacht1, yacht2, yacht3, yacht4, yacht5}). Usually authors consider these problems for some specific set $\Omega$ of a given shape. Our approach allows us to investigate these problems simultaneously for all possible sets~$\Omega$.} For each case we investigate the phase portrait for the vertical subsystem of the Hamiltonian system and study possible extremals for arbitrary $\Omega$. A special type of extremals appears for not strictly convex $\Omega$ with edges. For the Markov-Dubins generalization, we describe all extremal controls explicitly in Theorems~\ref{Dubins1},~\ref{Dubins2}. For the Reeds-Shepp generalization, we find all patterns for control switchings. For other problems, we use the theory developed in Sec.~\ref{sec:second-order}  to classify all types of extremal controls. 
	
	\item In Sec.~\ref{sec:plane_dynamic}, we consider a system with drift that describes a controlled motion of a massive point on a plane, where the maximum admissible acceleration depends on the chosen direction. This problem generalizes the classic one-dimensional Pontryagin train control problem to the dimension~2.
	
\end{enumerate}

The paper has the following structure. In Sec.~\ref{sec:convex_trig}, we shortly introduce convex trigonometry, a list of sets~$\Omega$ for which the functions $\cos_\Omega$ and $\sin_\Omega$ are computed explicitly, and some new results on smoothness of these functions. Section~\ref{sec:second-order} contains important results on second order ODEs  {expressed by} functions of convex trigonometry. These results are used in all consequent sections. Sections~\ref{sec:lobachavsky}, \ref{sec:3DLie}--\ref{sec:plane_dynamic} contain investigation of the optimal control problems described above.

\section{Convex trigonometry}
\label{sec:convex_trig}

Let $\Omega\subset\R^2$ be a convex compact set and let $0\in\mathrm{int}\,\Omega$. In this section, we give a brief introduction to convex trigonometry, which was introduced in~\cite{CT1}. In the present paper, we use convex trigonometry in all the optimal control problems mentioned in the introduction.

\subsection{Notation and definitions}

Denote by $\mathbb{S}$ the area of the set $\Omega$.

\begin{defn}
	Let $\theta\in\R$ denote a generalized angle. If $0\le\theta<2\mathbb{S}$, then we choose a point $P_\theta$ on the boundary of $\Omega$ such that the area of the sector of $\Omega$ between the rays $Ox$ and $OP_\theta$ is $\frac12\theta$. By definition $\cos_\Omega\theta$ and $\sin_\Omega\theta$ are the coordinates of~$P_\theta$. If the generalized angle $\theta$ does not belong to the interval $\big[0;2\mathbb{S}\big)$, then we define the functions $\cos_\Omega$ and $\sin_\Omega$ as periodic with the period $2\mathbb{S}$; i.e., for $k\in\Z$ such that $\theta + 2\mathbb{S}k \in \big[0;2\mathbb{S}\big)$ we put
	\[
	\cos_\Omega\theta = \cos_\Omega(\theta + 2\mathbb{S}k);\qquad
	\sin_\Omega\theta = \sin_\Omega(\theta + 2\mathbb{S}k);\qquad
	P_\theta = P_{\theta+2\mathbb{S}k}.
	\]
\end{defn}

If $\Omega$ is the unit disc, $\Omega=\{x^2+y^2\le 1\}$, then the above definition produces the classic trigonometric functions $\cos$ and $\sin$.

We will use the polar set $\Omega^\polar$ together with the set $\Omega$:
\[
\Omega^\polar = \{(p,q)\in\R^{2*}:px+qy\le 1\mbox{ for all }(x,y)\in\Omega\}\subset\R^{2*}.
\]

\noindent The polar set $\Omega^\polar$ is (always) a convex and compact (as $0\in\mathrm{int}\,\Omega$) set and $0\in\mathrm{int}\,\Omega^\polar$ (as $\Omega$ is bounded). To avoid confusion we will assume that the set $\Omega$ lies in the plane with coordinates $(x,y)$ and the polar set $\Omega^\polar$ lies in the plane with coordinates $(p,q)$.

Note that $\Omega^{\circ\circ}=\Omega$ by the bipolar theorem (see \cite[Theorem 14.5]{Rockafellar}). We can apply the above definition of the generalized trigonometric functions to the polar set $\Omega^\polar$ and an arbitrary angle $\psi\in\R$ to construct $\cos_{\Omega^\polar}\psi$ and $\sin_{\Omega^\polar}\psi$, which are the coordinates of the appropriate point $Q_\psi\in\partial\Omega^\polar$. 

\begin{defn}
	We say that angles $\theta\in\R$ and $\theta^\polar\in\R$ correspond to each other and write $\theta\xleftrightarrow{\Omega\ }\theta^\polar$ if the supporting half-plane of $\Omega$ at $P_\theta$ is determined by the (co)vector~$Q_{\theta^\polar}$. When no confusing ensues we omit the symbol $\Omega$ over the arrow and write~$\theta\leftrightarrow\theta^\polar$.
\end{defn}

\subsection{Main properties of convex trigonometry}
\label{subsec:CT_properties}

Properties of the classic functions $(\cos,\sin)$ are inherited by two pairs of functions $(\cos_\Omega,\sin_\Omega)$ and $(\cos_{\Omega^\polar},\sin_{\Omega^\polar})$ for the sets $\Omega$ and $\Omega^\polar$. All the listed below properties are proved in \cite[Sec.~2]{CT1}.

\begin{enumerate}[I.]
	\item \label{property:main_trig_thm} \textit{Generalized Pythagorean identity}
	\begin{equation}
	\label{eq:main_trig}
		\theta\xleftrightarrow{\Omega\,}\theta^\polar
		\quad\Longleftrightarrow\quad
		\theta^\polar\xleftrightarrow{\Omega^\polar}\theta
		\quad\Longleftrightarrow\quad
		\cos_\Omega\theta\cos_{\Omega^\polar}\theta^\polar + 
		\sin_\Omega\theta\sin_{\Omega^\polar}\theta^\polar = 1.
	\end{equation}

	\item \textit{Differentiation formulae.} The functions $\cos_\Omega$ and $\sin_\Omega$ are Lipschitz continuous, and for a.e.\ $\theta$,
\begin{equation}\label{eq:diff}
		\cos'_\Omega\theta = -\sin_{\Omega^\polar}\theta^\polar
		\quad\mbox{ and }\quad
		\sin'_\Omega\theta = \cos_{\Omega^\polar}\theta^\polar
\end{equation}
	where $\theta^\polar\leftrightarrow\theta$ (in this case, there exists a unique $\theta^\polar\leftrightarrow\theta$). At points $\theta$ of non-differentiability, the correspondence $\theta^\polar\leftrightarrow\theta$ is not bijective, but both functions $\cos_\Omega$ and $\sin_\Omega$ have right and left derivatives, and the interval between them is given by the right-hand sides in~\eqref{eq:diff} for all $\theta^\polar\leftrightarrow\theta$.
	
	\bigskip
	
	\item \textit{Quasiperiodicity of the dependence $\theta^\polar(\theta)$}. Let $\mathbb{S}^\polar$ denote the area of $\Omega^\polar$. Obviously, the angle $\theta^\polar$ corresponding to $\theta$ is defined up to $2\mathbb{S}^\polar$, but we are able to choose a monotone function $\theta^\polar(\theta)\leftrightarrow\theta$. Then
	\[
		\theta^\polar(\theta+2\mathbb{S}k) = \theta^\polar(\theta) + 2\mathbb{S}^\polar k
		\quad\mbox{for}\quad k\in\Z.
	\]
	
	\item\textit{Polar change of coordinates.} Put $x=r\cos_\Omega\theta$ and $y=r\sin_\Omega\theta$. Then Jacobi matrix is 
	\[
		J=\begin{pmatrix}
			x'_r & x'_\theta\\
			y'_r & y'_\theta
		\end{pmatrix}
		=
		\begin{pmatrix}
			\cos_\Omega\theta & -r\sin_{\Omega^\polar}\theta^\polar\\
			\sin_\Omega\theta & r\cos_{\Omega^\polar}\theta^\polar\\
		\end{pmatrix}
	\]
	where $\theta^\polar\leftrightarrow\theta$. Moreover, $\det J = r$ by the generalized Pythagorean identity.
	
	\bigskip
	
	\item\textit{Inverse polar change of coordinates.} Let $(x(t),y(t))$ be an absolutely continuous curve that does not pass through the origin, and $x=r\cos_\Omega\theta$ and $y=r\sin_\Omega\theta$ as above. Then
	\[
		r=s_{\Omega^\polar}(x,y)\qquad\mbox{and}\qquad
		\dot\theta=\frac{x\dot y-\dot x y}{r^2}.
	\]
	\noindent The first equation holds for all $t$, and the second one holds for a.e.\ $t$.
\end{enumerate}

\subsection{List of explicitly computed functions $\cos_\Omega$ and $\sin_\Omega$.}
\label{subsec:explicit_sets}

In this subsection, we enumerate all known cases of sets $\Omega$ for which it is possible to compute functions of convex trigonometry.

\medskip

\begin{enumerate}[1.]
	\item \textit{Suppose $\Omega$ is a (convex) polyhedron.} In this case, the functions $\cos_\Omega$ and $\sin_\Omega$ are piecewise linear. They have corners when the point $P_\theta$ passes through a  vertex of $\Omega$. Explicit formulas for  $\cos_\Omega$ and $\sin_\Omega$ are given in~\cite[Sec.~3]{CT1} in terms of coordinates of vertices.
	
	\bigskip
	
	\item \textit{Suppose $\Omega=\{|x|^p+|y|^p\le 1\}$ for some $1\le p\le\infty$.} If $p=1$ or $p=\infty$, then $\Omega$ is a square and the functions $\cos_\Omega$ and $\sin_\Omega$ are computed as above. If $1<p<\infty$, then $\cos_\Omega$ and $\sin_\Omega$ coincide with Shelupcky's generalized trigonometry functions (see~\cite[Sec.~4]{CT2}). Shelupsky's functions form a 1-parametric family (determined by the parameter $p$) of pair of functions $(\cos_p,\sin_p)$, whereas the convex trigonometry functions introduced in~\cite{CT1} form an infinite-dimensional family determined by the set $\Omega$. In other words, Shelupsky's functions are a special case of convex trigonometry. Shelupsky's function are very convenient for eigenvalue problems for $p$-Laplacians (see~\cite{Shelupsky,WeiLiuElgindi} for details). In this case, the functions $\cos_\Omega$ and $\sin_\Omega$ can be expressed in terms of the inverse Euler beta function (see~\cite[Theorem~5]{CT2}).
	
	\bigskip
		
	\item\label{item:omega_paramtric}\textit{Suppose the boundary of $\Omega$ is defined parametrically} $\partial\Omega=\{(x(s),y(s)) {\in \R^2} \mid s\in\R\}$. Then, for any $s$, there is defined a generalized angle $\theta(s)$ such that $P_{\theta(s)}=(x(s),y(s))$. Obviously,
	\begin{equation}
	\label{eq:cos_sin_is_x_y_of_s}
		\cos_\Omega\theta(s)=x(s)\qquad\mbox{and}\qquad\sin_\Omega\theta(s)=y(s).
	\end{equation}
	\noindent Let us compute the function $\theta(s)$ and its inverse $s(\theta)$. Using the inverse polar change of coordinates, we get
	\begin{equation}
	\label{eq:theta_by_s}
		\theta'_s=x y'_s - x'_s y
		\quad\Leftrightarrow\quad
		\theta(s) = \int (x(s) y'_s(s) - x'_s(s) y(s))\,ds,
	\end{equation}
	since $r(s)=s_{\Omega^\polar}(x(s),y(s))=1$. For some cases this integral can be taken explicitly, but it is not possible to do it in general case. Nonetheless, we can often avoid explicit integration here! The main reason is the following: any of the above mentioned optimal control problems can be reduced to an ODE of the form $\dot\theta=f(\cos_\Omega\theta,\sin_\Omega\theta)$. From~\eqref{eq:cos_sin_is_x_y_of_s} we have the following ODE:
	\[
		\dot s = \dot\theta/\theta'_s = \frac{f(x(s),y(s))}{x(s)y'_s(s)-x'_s(s)y(s)}
		\quad\Leftrightarrow\quad
		t=\int\frac{x(s)y'_s(s)-x'_s(s)y(s)}{f(x(s),y(s))}\,ds.
	\]
	\noindent The last integral does not contain new functions $\cos_\Omega$ and $\sin_\Omega$, and its computation is reduced to classic calculus.
	
	\bigskip
	
		\item \textit{Suppose $\Omega$ is an ellipse.} In this case, its boundary is given by $x(s)=a\cos s+x_0$ and $y(s)=b\sin s+y_0$ (where $\frac{x_0^2}{a^2}+\frac{y_0^2}{b^2}<1$, since $0\in\mathrm{int}\,\Omega$). Using the previous item we obtain
		\[
		\theta = ab\,s - y_0 a\cos s + x_0 b\sin s + \const.
		\]
		\noindent We can not find $s(\theta)$ from this equation explicitly. But the function $f(x,y)$ is the square root of a linear or quadratic polynomial in all optimal control problems considered in the paper. Hence, the extremals equation $\dot\theta=f(\cos_\Omega\theta,\sin_\Omega\theta)$ in these problems takes the form
		\[
			\dot s = \frac{f(a\cos s+x_0,b\sin s+y_0)}{ab - y_0a\sin s + x_0b\cos s}
			\quad\Leftrightarrow\quad
			t=\int\frac{ab - y_0a\sin s + x_0b\cos s}{f(a\cos s+x_0,b\sin s+y_0)}\,dt,
		\]
		\noindent which can be expressed in elliptic functions.
		
		\bigskip

	\item\textit{Suppose that the Minkowski function $\mu_\Omega=s_{\Omega^\polar}$ is known: $\Omega=\{(x,y):s_{\Omega^\polar}(x,y)\le 1\}$.}	Let $\phi$ denote a classic angle. Let us parametrize the boundary $\partial\Omega$ by $\phi$, i.e.,
	\[
		x(\phi)=\cos_\Omega\theta(\phi) = \frac{\cos\phi}{s_{\Omega^\polar}(\cos \phi,\sin\phi)}
		\quad\mbox{and}\quad
		y(\phi)=\cos_\Omega\theta(\phi) = \frac{\sin\phi}{s_{\Omega^\polar}(\cos \phi,\sin\phi)}.
	\]
	
	\noindent Put $r(\phi)=s_{\Omega^\polar}(\cos\phi,\sin\phi)$ for short. Then $x+\ii y = \frac{1}{r}e^{\ii\phi}$. Thus, $\theta' = xy' - x'y$ by the inverse polar change of coordinates. So
	\[
		\theta' = \Re((x+\ii y)\ii(x'-\ii y')) = 
		\frac1r\Re\left(\ii e^{\ii\phi}\big(\frac{1}{r}e^{-\ii\phi}\Big)'\right) = 
		\frac1r\Re\left(\ii e^{\ii\phi}\Big(-\frac{\ii}{r}e^{-\ii\phi} - \frac{r'}{r^2}e^{-\ii\phi}\Big)\right).
	\]
	
	\noindent Hence,
	\[
		\theta' = r^{-2}
		\quad\Leftrightarrow\quad
		\theta(\phi) = \int \frac{d\phi}{r^2(\phi)}.
	\]
	
	\noindent The values $\cos_\Omega\theta$ and $\sin_\Omega\theta$ are equal to $r^{-1}\cos\phi$ and $r^{-1}\sin\phi$ correspondingly. The functions $\cos_{\Omega^\polar}$, $\sin_{\Omega^\polar}$ and the corresponding angle $\theta^\polar\leftrightarrow\theta$ can be found explicitly using differentiation formulae:
	\[
		\begin{array}{lcl}
			\cos_{\Omega^\polar}\theta^\polar = (\sin_\Omega\theta)'_\theta = y'_\phi/\theta'_\phi = r^2 y'_\phi,
			&\Rightarrow&
			\cos_{\Omega^\polar}\theta^\polar = r\cos\phi - r'\sin\phi;\\
			\sin_{\Omega^\polar}\theta^\polar = -(\cos_\Omega\theta)'_\theta = -x'_\phi/\theta'_\phi = -r^2 x'_\phi
			&\Rightarrow&
			\sin_{\Omega^\polar}\theta^\polar = r\sin\phi + r'\cos\phi.
		\end{array}
	\]
	\noindent So, the curve $p+\ii q = (r+\ii r')e^{\ii\phi}$ describes the boundary of the polar set $\Omega^\polar$. Similarly,
	\[
		{\theta^\polar}' = \Re \big((r+\ii r')e^{\ii\phi}\ii((r-\ii r')e^{-\ii\phi})'\big) =
		\Re((r+\ii r')e^{\ii\phi}\ii e^{-\ii\phi}(-\ii (r+r''))
	\]
	
	\noindent Hence,
	\[
		{\theta^\polar}' = r^2+rr''
		\quad\Leftrightarrow\quad
		\theta^\polar(\phi) = \int (r^2(\phi)+r(\phi)r''(\phi))\,d\phi.
	\]
	
	
\end{enumerate}

\begin{remark}
	Formulae for extremals in the present paper are usually obtained in terms of convex trigonometry. According to item~\ref{item:omega_paramtric}, if the boundary of~$\Omega$ is given parametrically, then all these formulae can be easily rewritten to ones that does not use convex trigonometry at all.
\end{remark}

\begin{remark}
	If the boundary of $\Omega$ consists of finite number of parts of boundaries of sets $\Omega_1$, $\ldots$, $\Omega_N$, then graphs of functions $\cos_\Omega$ and $\sin_\Omega$ are glued of corresponding parts of graphs of functions $\cos_{\Omega_k}$, $\sin_{\Omega_k}$, $1\le k\le\Omega_N$. For example, a cut disc is considered in detail in Sec.~\ref{sec:yachts}
\end{remark}

\subsection{Smoothness of the functions $\cos_\Omega$ and $\sin_\Omega$}

Using the formulae of convex trigonometry obtained in the previous subsection, we are able to monitor smoothness of the convex trigonometric functions.

\begin{prop} 
\label{prop1}
	The following statements are equivalent: 
	\begin{itemize}
		\item[$(i)$]   the boundary of $\Omega$ is a regular $C^k$-curve; 
		\item[$(ii)$]  $\cos_\Omega$ and $\sin_\Omega$ are $C^k$ functions; 
		\item[$(iii)$] $s_{\Omega^\polar}$ is $C^k$ outside the origin. 
	\end{itemize}
	Here $k\ge 1$ is integer, $k=\infty$, or $k=\omega$.
\end{prop}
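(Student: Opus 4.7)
The plan is to prove (i)$\Leftrightarrow$(ii) by using the convex-trigonometric parametrization $\theta\mapsto P_\theta$ of $\partial\Omega$, and (i)$\Leftrightarrow$(iii) by exploiting that $s_{\Omega^\polar}$ is the Minkowski gauge of $\Omega$. Both equivalences are local and reduce to the inverse function theorem, where the nonvanishing of the relevant Jacobians comes from $0\in\intt\Omega$ (and from $0\in\intt\Omega^\polar$, which is automatic by the bipolar theorem).

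For (i)$\Rightarrow$(ii), I would take any $C^k$-regular parametrization $\gamma(s)=(x(s),y(s))$ of $\partial\Omega$. By Property V of Subsec.~\ref{subsec:CT_properties} (inverse polar change of coordinates), the associated generalized angle satisfies $\theta'_s=x\,y'_s-x'_s\,y$. Since $0\in\intt\Omega$, no tangent line to $\partial\Omega$ passes through the origin, so $\gamma(s)$ and $\gamma'(s)$ are linearly independent and $\theta'_s\neq 0$; the inverse function theorem then produces a $C^k$ inverse $s(\theta)$, whence $\cos_\Omega\theta=x(s(\theta))$ and $\sin_\Omega\theta=y(s(\theta))$ are $C^k$. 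For the converse (ii)$\Rightarrow$(i), the map $\theta\mapsto(\cos_\Omega\theta,\sin_\Omega\theta)$ is already a $C^k$ parametrization of $\partial\Omega$; its velocity equals $(-\sin_{\Omega^\polar}\theta^\polar,\cos_{\Omega^\polar}\theta^\polar)$ by~\eqref{eq:diff}, i.e.\ the $90^\circ$-rotation of $Q_{\theta^\polar}\in\partial\Omega^\polar$, and is therefore nonzero because $0\in\intt\Omega^\polar$, establishing regularity.

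For (iii)$\Rightarrow$(i), I would write $\partial\Omega=s_{\Omega^\polar}^{-1}(1)$ and invoke Euler's identity for the positively $1$-homogeneous function $s_{\Omega^\polar}$, which gives $\langle\nabla s_{\Omega^\polar}(x),x\rangle=s_{\Omega^\polar}(x)=1$ on $\partial\Omega$; hence $\nabla s_{\Omega^\polar}\neq 0$ there and the regular value theorem makes $\partial\Omega$ a $C^k$-regular curve. For (i)$\Rightarrow$(iii), given the $C^k$-regular $\gamma$ as above, I would introduce $G(t,s)=t\,\gamma(s)$ on $(0,\infty)\times I$; homogeneity gives $s_{\Omega^\polar}(G(t,s))=t$, and the Jacobian determinant $t(x\,y'_s-x'_s\,y)$ is nonvanishing by the same argument as in the first step, so $G$ is a local $C^k$-diffeomorphism and $s_{\Omega^\polar}$ inherits $C^k$ smoothness on $\R^2\setminus\{0\}$.

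The main technical point to handle carefully is uniformity across the range $k\in\{1,2,\dots,\infty,\omega\}$: at the low end $k=1$ one must verify that Property~II's a.e.\ differentiability upgrades to pointwise (which it does once $\cos_\Omega,\sin_\Omega\in C^1$, since then the right- and left-derivatives in~\eqref{eq:diff} must coincide, forcing $\theta^\polar\leftrightarrow\theta$ to be single-valued), while at the high end $k=\omega$ the smooth inverse/implicit function theorems must be swapped for their real-analytic counterparts. Both adjustments are standard, and the core geometric content lives entirely in the two nonvanishing Jacobian computations above.
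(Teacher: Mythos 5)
Your argument for $(i)\Leftrightarrow(ii)$ coincides with the paper's: parametrize $\partial\Omega$, use $\theta'_s=xy'_s-x'_sy\neq 0$ (from $0\in\intt\Omega$) and the inverse function theorem one way, and the nonvanishing of $({\cos'_\Omega}\theta,{\sin'_\Omega}\theta)\in\partial\Omega^\polar$ rotated by $90^\circ$ (from $0\in\intt\Omega^\polar$) the other way; your extra remark that for $k=1$ the a.e.\ identity \eqref{eq:diff} upgrades to a pointwise one is a careful touch the paper leaves implicit. Where you genuinely diverge is $(i)\Leftrightarrow(iii)$. The paper works in classical polar coordinates, writes $s_{\Omega^\polar}=\rho\, g(\phi)$ with $g>0$, and shuttles the smoothness between $g$ and the parametrization $(\cos\phi/g,\sin\phi/g)$ of $\partial\Omega$ via the polar angle $\phi(s)$ and one more application of the inverse function theorem. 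You instead get $(iii)\Rightarrow(i)$ from Euler's identity $\langle\nabla s_{\Omega^\polar}(x),x\rangle=s_{\Omega^\polar}(x)=1$ on $\partial\Omega$ plus the regular value theorem, and $(i)\Rightarrow(iii)$ from the conical chart $G(t,s)=t\,\gamma(s)$, whose Jacobian $t(xy'_s-x'_sy)$ is nonvanishing by the same geometric fact, so that $s_{\Omega^\polar}=t\circ G^{-1}$ is locally $C^k$. Both routes are correct and rest on the identical geometric input ($0\in\intt\Omega$, homogeneity of the gauge); yours is coordinate-free and avoids introducing the auxiliary function $g$, at the price of invoking Euler's identity and the preimage theorem, while the paper's polar-coordinate computation has the side benefit of producing the explicit formula $(g^2+g'^2)/g^4$ for the speed of the induced parametrization, which it reuses elsewhere. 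As you note, both versions need the real-analytic inverse/implicit function theorems for $k=\omega$; this is routine.
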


\begin{proof}
	$(i)\Rightarrow(ii)$. Let the boundary $\partial\Omega$ be given as $(x(s), y(s))$, where $x,y\in C^k$, and ${x'_s}^2+{y'_s}^2\ne 0$. Then $\theta'_s(s)\in C^{k-1}$ by~\eqref{eq:theta_by_s}. So $\theta(s)\in C^k$. Moreover, $\theta'_s\ne 0$, since ${x'_s}^2 + {y'_s}^2\ne 0$ and $0\in\mathrm{int}\,\Omega$. So the inverse function $s(\theta)$ has the same smoothness, $s(\theta)\in C^k$ by the inverse function theorem.  Thus $\cos_\Omega\theta=x(s(\theta))$ and $\sin_\Omega(\theta)=y(s(\theta))$ are $C^k$.
	
	$(ii)\Rightarrow(i)$. The functions $\cos_\Omega\theta$ and $\sin_\Omega\theta$ give $C^k$ parametrization of the boundary $\partial\Omega$, and ${\cos'_\Omega}^2\theta+{\sin'_\Omega}^2\theta = \cos_{\Omega^\polar}^2\theta^\polar+\sin_{\Omega^\polar}^2\theta^\polar\ne0$, since $0\in\mathrm{int}\,\Omega^\polar$.
	
	$(iii)\Rightarrow(i)$. Consider the support function $s_{\Omega^\polar}$ in the classic polar coordinates $x=\rho\cos\phi$, $y=\rho\sin\phi$. Since $s_{\Omega^\polar}$ is positively homogeneous, we have $s_{\Omega^\polar}=\rho g(\phi)$, where $g(\phi)>0$ as $0\in\mathrm{int}\,\Omega$. So $s_{\Omega^\polar}$ is $C^k$ iff $g$ is $C^k$. Since $g\ne 0$, this is equivalent to the fact that $1/g$ is $C^k$. So if $s_{\Omega^\polar}$ is $C^k$, then $x(\phi)=\cos\phi/g(\phi)$, $y(\phi)=\sin\phi/g(\phi)$ is a $C^k$-parametrization of the boundary $\partial\Omega$, and ${x'_\phi}^2+{y'_\phi}^2=(g^2 + g'^2)/g^4>0$. 
	
	$(i)\Rightarrow(iii)$. Consider a $C^k$-parametrization $(x(s),y(s))$ of $\partial\Omega$, ${x'_s}^2 + {y'_s}^2\ne 0$. In the (classic) polar coordinates $(\rho,\phi)$ we have the parametrization $(\rho(s),\phi(s))$, which is again $C^k$, since $0\in\mathrm{int}\,\Omega$. We know that $s_{\Omega^\polar}(x(s),y(s))\equiv 1$, so $g(\phi(s))\equiv 1/\rho(s)$. Since $\phi'_s = (xy'_s-x'_sy)/(x^2+y^2)$, we have $\phi'_s\in C^{k-1}$, $\phi\in C^k$, and $\phi'_s\ne 0$ as $0\in\mathrm{int}\,\Omega$. So the function $g(\phi)$ must be $C^k$ by the inverse function theorem.
\end{proof}

\begin{prop} 
\label{prop:w_k_p}
	For any $1\le p\le\infty$ and integer $k\ge 1$, the following statements are equivalent: 
	\begin{itemize}
		\item[$(i)$]   the boundary of $\Omega$ is a regular $W^k_p$-curve\footnote{The term ``regular $W^k_p$-curve'' means that there exists a $W^k_p$ parametrization~$(x(s),y(s))$ of the curve such that ${x'_s}^2+{y'_s}^2$ is separated from~0 for a.e.~$s$.}; 
		\item[$(ii)$]  $\cos_\Omega$ and $\sin_\Omega$ are $W^k_p$ functions (on $\R/2\mathbb{S}\Z$);
		\item[$(iii)$] $s_{\Omega^\polar}(\cos\phi,\sin\phi)$ is $W^k_p$ function (on $\R/2\pi\Z$).
\end{itemize}
\end{prop}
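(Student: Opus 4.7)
My plan is to mirror the four-implication cycle of the proof of Proposition~\ref{prop1}, replacing each $C^k$ argument by its $W^k_p$ analogue in one variable. The adaptation will lean on three one-dimensional Sobolev facts that I will treat as black boxes: (a) for $k\ge 1$ and $1\le p\le\infty$, $W^k_p$ on a bounded interval (or on a circle) embeds continuously into $C^{k-1}$, so $W^k_p$ is a Banach algebra under pointwise multiplication and $W^k_p\cdot W^{k-1}_p\subset W^{k-1}_p$; (b) if $\theta:[a,b]\to[c,d]$ is a bi-Lipschitz bijection whose derivative is essentially bounded and essentially bounded below, then $\theta\in W^k_p$ iff its inverse $s\in W^k_p$; (c) under the same hypotheses, composition with $\theta$ or $s$ preserves $W^k_p$ regularity.

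For $(i)\Rightarrow(ii)$, starting from a regular $W^k_p$ parametrization $(x(s),y(s))$ of $\partial\Omega$, formula~\eqref{eq:theta_by_s} gives $\theta'_s=xy'_s-x'_sy$, which lies in $W^{k-1}_p$ by (a); hence $\theta(s)\in W^k_p$. Convexity of $\Omega$ together with $0\in\intt\,\Omega$ and the essential lower bound on $(x'_s)^2+(y'_s)^2$ forces $\theta'_s$ to be essentially bounded below, so $s\mapsto\theta(s)$ is bi-Lipschitz. Fact (b) then gives $s(\theta)\in W^k_p$, and fact (c) yields $\cos_\Omega\theta=x(s(\theta))$ and $\sin_\Omega\theta=y(s(\theta))$ in $W^k_p$ via~\eqref{eq:cos_sin_is_x_y_of_s}.

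For $(ii)\Rightarrow(i)$, the pair $(\cos_\Omega\theta,\sin_\Omega\theta)$ is itself a $W^k_p$ parametrization of $\partial\Omega$; by the differentiation formulae~\eqref{eq:diff} its squared speed equals $\cos_{\Omega^\polar}^2\theta^\polar+\sin_{\Omega^\polar}^2\theta^\polar$, which is bounded away from $0$ since $0\in\intt\,\Omega^\polar$. The equivalence $(i)\Leftrightarrow(iii)$ will follow as in Proposition~\ref{prop1}: writing $s_{\Omega^\polar}$ in classical polar coordinates as $\rho\,g(\phi)$, one obtains the boundary parametrization $\bigl(\cos\phi/g(\phi),\sin\phi/g(\phi)\bigr)$, and $W^k_p$ regularity transfers between $g$ and this parametrization using fact (a) to control products such as $g^2+{g'}^2$ (which is bounded below by $g^2>0$), and facts (b), (c) to pass between the parameters $\phi$ and $s$.

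The step I expect to be hardest is $(i)\Rightarrow(ii)$: verifying the a.e.\ lower bound $|\theta'_s|\ge c>0$ rigorously (combining convexity, $0\in\intt\,\Omega$, and the regularity of the parametrization), and then correctly applying the Sobolev inverse-function result (b) together with composition (c) to conclude that $\cos_\Omega$ and $\sin_\Omega$ are $W^k_p$ globally on $\R/2\mathbb{S}\Z$ rather than only on individual arcs where one coordinate is monotone. The reverse direction from (iii) to (i) requires analogous care in polar coordinates, but is structurally identical to the corresponding step in the $C^k$ proof.
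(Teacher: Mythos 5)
Your proposal is correct and follows essentially the same route as the paper: the paper likewise runs the implication cycle of Proposition~\ref{prop1}, establishes the a.e.\ lower bound on $\theta'_s$ from the regularity of the parametrization and $0\in\intt\,\Omega$, treats $k=1$ by a direct change-of-variables estimate and $k\ge2$ via the embedding $W^k_p\subset C^{k-1}$, bi-Lipschitzness of $s\leftrightarrow\theta$, and smoothness of $a\mapsto1/a$ on $[C;C']$ --- i.e.\ exactly the inversion and composition facts you package as black boxes (b) and (c) --- and then refers the remaining implications back to Proposition~\ref{prop1}.
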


\begin{proof}
	$(i)\Rightarrow(ii)$. Let the boundary $\partial\Omega$ be given as $(x(s),y(s))$, where $x,y\in W^k_p[s_0;s_1]$, and ${x'_s}^2+{y'_s}^2\ge C>0$ for some constant $C>0$. Consider first the case $k=1$. In this case, $s'_\theta(\theta)=1/\theta'_s(s(\theta))\le 1/C$, so $s'(\theta)$ is a bounded function and $s(\theta)$ is Lipschitz continuous. Moreover, $s'$ vanishes on a set of zero measure. Hence the derivative $\cos'_\Omega\theta=x'_s(s(\theta))s'(\theta)$ is defined for a.e.\ $\theta$, since $\cos_\Omega \theta = x(s(\theta))$. Therefore, if $p<\infty$ we have
$$
		\int_{\theta(s_0)}^{\theta(s_1)} |\cos'_\Omega\theta|^p\,d\theta = 
		\int_{\theta(s_0)}^{\theta(s_1)} |x'_s(\theta(s))|(s'(\theta))^p\,d\theta 
		\le\frac{1}{C^{p-1}}\int_{\theta(s_0)}^{\theta(s_1)} |x'_s(\theta(s))|s'(\theta)\,d\theta =
		\frac{1}{C^{p-1}}\|x'_s\|_p.
$$
	\noindent In the case $p=\infty$, we have
	\[
		|\cos'_\Omega\theta| = |x'_s(\theta(s))|s'(\theta) \le \frac{1}{C}|x'_s(\theta(s))|.
	\]
	\noindent Hence, in the both cases, $\cos_\Omega\in W^1_p[\theta(s_0);\theta(s_1)]$. Similarly, $\sin_\Omega\in W^1_p[\theta(s_0);\theta(s_1)]$.
	
	Now, we consider the case $k\ge 2$. In this case, $\partial\Omega\in C^{k-1}$. Therefore, $\theta(s)$ is a $C^{k-1}$ function and $s(\theta)$ is $C^{k-1}$ too. Moreover, $\theta'_s$ is a continuous function separated from~$0$, so $s(\theta)$ is a bi-Lipschitzian function. Hence, $\theta'_s(s(\theta))$ is $W^{k-1}_p$,  since $\theta'_s(s)$ is $W^{k-1}_p$. Moreover, $0<C\le \theta'_s\le C'$ for some constants $C'\ge C>0$, and the function $a\mapsto 1/a$ is smooth on $[C;C']$. Hence, $s'(\theta)=1/\theta'(s(\theta))\in W^{k-1}_p$. Therefore, $s(\theta)$ is $W^{k}_p$. It remains to compute
	\[
		\frac{d^k}{d\theta^k}\cos_\Omega\theta = x^{(k)}(s(\theta))(s'(\theta))^k + x'_s(s(\theta))s^{(k)}(\theta) + G(\theta)
	\]
	\noindent where $G$ is a continuous function. So $\cos_\Omega\in W^k_p$ and similarly $\sin_\Omega\in W^k_p$.
	
	The other proofs $(ii)\Rightarrow(i)$, 	$(i)\Rightarrow(iii)$, and $(iii)\Rightarrow(i)$ are similar to the corresponding proofs for Proposition~\ref{prop1}.
\end{proof}

\begin{remark}
\label{rm:smoothness_on_boundary_interval}
	Suppose that an open interval $l$ on $\partial\Omega$ has smoothness $W^k_p$ (or $C^k$). Then the functions $\cos_\Omega$ and $\sin_\Omega$ have the same smoothness while the point $P_\theta$ moves along $l$.
\end{remark}

We already know, that if $\partial\Omega$ is $C^1$ then the function $\theta^\polar(\theta)$ is continuous. Let us extend this property:

\begin{corollary}
\label{cor:curvature}
	If $\partial\Omega$ is a regular $C^k$ (or $W^k_p$) curve for $k\ge 2$ and $1\le p\le+\infty$, then the function $\theta^\polar(\theta)$ is $C^{k-1}$ (or $W^{k-1}_p$) and
\begin{equation}\label{eq:dtheta0}
		\frac{d\theta^\polar}{d\theta} = 
				\cos'_\Omega\theta\sin''_\Omega\theta - \sin'_\Omega\theta\cos''_\Omega\theta.
\end{equation}
	\noindent The curvature $\kappa$ of $\partial\Omega$ at the point $(\cos_\Omega\theta,\sin_\Omega\theta)$ is given by the formula
	\[
		\kappa(\theta) = (\cos_{\Omega^\polar}^2\theta^\polar + \sin_{\Omega^\polar}^2\theta^\polar)^{-3/2} \frac{d\theta^\polar}{d\theta}.
	\]
\end{corollary}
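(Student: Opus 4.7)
My plan is to establish the formula \eqref{eq:dtheta0} for $d\theta^\polar/d\theta$ by applying the inverse polar change of coordinates (property V of convex trigonometry) on the dual side, to the set $\Omega^\polar$ and the curve $\theta \mapsto Q_{\theta^\polar(\theta)}$. This single step yields both the explicit form of the derivative and the required smoothness. The curvature formula will then come from the classical expression for the signed curvature of a plane curve.

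First I would invoke Proposition~\ref{prop1} (resp.\ Proposition~\ref{prop:w_k_p}) to conclude that $\cos_\Omega, \sin_\Omega \in C^k$ (resp.\ $W^k_p$). By the differentiation formulae~\eqref{eq:diff}, the point $Q_{\theta^\polar(\theta)} \in \partial\Omega^\polar$ has coordinates
$$\cos_{\Omega^\polar}\theta^\polar = \sin'_\Omega\theta, \qquad \sin_{\Omega^\polar}\theta^\polar = -\cos'_\Omega\theta,$$
which are $C^{k-1}$ (resp.\ $W^{k-1}_p$) as functions of $\theta$. Since $k \ge 2$ these are in particular absolutely continuous, and the curve stays away from the origin because $0 \in \intt\Omega^\polar$.

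Next I would apply the inverse polar change of coordinates, but on the dual side: to the set $\Omega^\polar$ and the curve $Q_{\theta^\polar(\theta)}$ viewed as an AC function of the parameter $\theta$. Because $Q \in \partial\Omega^\polar$, bipolarity gives $r = s_{\Omega^{\polar\polar}}(Q) = s_\Omega(Q) \equiv 1$, so property~V reads
$$\frac{d\theta^\polar}{d\theta} = \cos_{\Omega^\polar}\theta^\polar \cdot \frac{d(\sin_{\Omega^\polar}\theta^\polar)}{d\theta} - \frac{d(\cos_{\Omega^\polar}\theta^\polar)}{d\theta} \cdot \sin_{\Omega^\polar}\theta^\polar$$
for a.e.~$\theta$. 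Substituting $\cos_{\Omega^\polar}\theta^\polar = \sin'_\Omega\theta$ and $\sin_{\Omega^\polar}\theta^\polar = -\cos'_\Omega\theta$ gives exactly~\eqref{eq:dtheta0}. Since the right-hand side is a product of $C^{k-1}$ and $C^{k-2}$ (resp.\ $W^{k-1}_p$ and $W^{k-2}_p$) factors, it is of class $C^{k-2}$ (resp.\ $W^{k-2}_p$), and hence $\theta^\polar$ itself is $C^{k-1}$ (resp.\ $W^{k-1}_p$).

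Finally, for the curvature formula, I would write the signed curvature of the parametrized curve $\theta \mapsto (\cos_\Omega\theta, \sin_\Omega\theta)$ via the classical expression
$$\kappa = \frac{\cos'_\Omega\theta \, \sin''_\Omega\theta - \sin'_\Omega\theta \, \cos''_\Omega\theta}{\bigl((\cos'_\Omega\theta)^2 + (\sin'_\Omega\theta)^2\bigr)^{3/2}},$$
recognize the numerator as $d\theta^\polar/d\theta$ by~\eqref{eq:dtheta0}, and rewrite the denominator as $(\cos_{\Omega^\polar}^2\theta^\polar + \sin_{\Omega^\polar}^2\theta^\polar)^{3/2}$ using~\eqref{eq:diff}. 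The main obstacle is the apparent circularity in the smoothness claim: one cannot differentiate $\theta^\polar(\theta)$ before knowing it is differentiable. The trick that removes this obstacle is to apply property~V on the polar side, which both produces the formula~\eqref{eq:dtheta0} and confirms that $d\theta^\polar/d\theta$ exists a.e.\ as an explicit $C^{k-2}$ (resp.\ $W^{k-2}_p$) function.
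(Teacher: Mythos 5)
Your proposal is correct and follows essentially the same route as the paper: both obtain \eqref{eq:dtheta0} by applying the inverse polar change of coordinates to the derivative curve $(\cos'_\Omega\theta,\sin'_\Omega\theta)$, which parametrizes $\partial\Omega^\polar$ (the paper uses this pair directly, you use its $90^\circ$ rotation $(\sin'_\Omega\theta,-\cos'_\Omega\theta)$ --- the formula $x y'-x'y$ is rotation-invariant, so the result is the same), and both then read off the smoothness of $\theta^\polar$ and deduce the curvature from the classical formula $\kappa=(\gamma'\times\gamma'')/|\gamma'|^3$. The only cosmetic difference is that the paper evaluates the cross product via $\gamma''=-\gamma\,d\theta^\polar/d\theta$ and the generalized Pythagorean identity, while you recognize the numerator directly as the right-hand side of \eqref{eq:dtheta0}; these are the same computation.
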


\begin{proof}
	The pair $(\cos'_\Omega\theta,\sin'_\Omega\theta)$ is a $C^{k-1}$ (or $W^{k-1}_p$) parametrization of $\partial\Omega^\polar$. Using the inverse polar change of coordinates (see Section~\ref{subsec:CT_properties}), we obtain formula~\eqref{eq:dtheta0}. Using it, we see that ${\theta^\polar}'(\theta)$ is $C^{k-2}$ (or $W^{k-2}_p$) and $\theta^\polar(\theta)$ is $C^{k-1}$ (or~$W^{k-1}_p$).
	
	The curvature can be computed in a  standard way: put $\gamma(\theta)=(\cos_\Omega\theta,\sin_\Omega\theta)$, then $\gamma'=(-\sin_{\Omega^\polar}\theta^\polar,\cos_{\Omega^\polar}\theta^\polar)$ and $\gamma''=-\gamma\,d\theta^\polar/d\theta$. It remains to compute $\kappa = (\gamma'\times\gamma'')/|\gamma'|^3$.
\end{proof}

\section{Finsler geometry on the Lobachevsky hyperbolic plane}
\label{sec:lobachavsky}

Consider the following control system on the upper half-plane $L^2=\{(x,y)\in \R^2\mid y>0\}$:
\begin{equation}
\label{eq:lobachevsky:control_system}
	\begin{cases}
		\dot x = y u_1;\\
		\dot y = y u_2;
	\end{cases}
\end{equation}
\noindent where the 2-dimensional control $u=(u_1,u_2)$ belongs to a convex compact set $\Omega$ with $0\in\mathrm{int}\,\Omega$. Solutions of the time minimizing problem $T\to\inf$ for this system are Finsler geodesics on $L^2$, where the length of a tangent vector $(\xi,\eta)\in T_{(x,y)}L^2$ is given by\footnote{Here $\mu_\Omega=s_{\Omega^\polar}$ denotes the Minkowski function of $\Omega$.} $y\mu_{\Omega}(\xi,\eta)$. Alternatively, we can consider this problem as a left-invariant Finsler geometry on the Lie group $\mathrm{Aff}(\R)$ of proper affine transformations of the line $\R$. When $\Omega$ is a unit circle, we have the classical hyperbolic plane with Lobachevskian geometry.

In this section we  show an easy way of constructing Finsler geodesics on $L^2$ by the convex trigonometry. Using the Pontryagin maximum principle (PMP), we get the following generalized Hamiltonian:
\[
	\mathcal{H} = pyu_1 + qyu_2,
\]
\noindent where $p$ and $q$ are adjoint variables. Maximum of $\mathcal{H}$ in $u\in\Omega$ can be written very conveniently in terms of the support function $s_\Omega$ of the set~$\Omega$:
\[
	H=\max_{u\in\Omega}\mathcal{H} = s_\Omega(py,qy).
\]
\noindent  Denote for short the arguments of $s_\Omega$ by $h_1=py$ and $h_2=qy$. Obviously $\frac{d}{dt}H=0$ and $H=\const$. We claim that the case $H=0$ is not possible. Indeed, if $H=0$, then $h_1=h_2=0$. Since $y>0$, we get $p=q=0$, which contradicts to the Pontryagin maximum principle.

So $H>0$. Consequently the point $(h_1,h_2)$ moves along the boundary of the polar set $\Omega^\polar$ stretched by $H$ times:
\[
	(h_1,h_2)\in H\partial\Omega^\polar.
\]

Let us now find the ``velocity'' of the point $(h_1,h_2)$. Substituting $\dot p=-\mathcal{H}'_x=0$ and $\dot q=-\mathcal{H}'_y=-pu_1-qu_2$, we get
\[
	\dot h_1 = h_1u_2,\qquad\dot h_2 = -h_1u_1,
\]
and control can be found from the Pontryagin maximum condition $h_1u_1+h_2u_2\to\max_{u\in\Omega}$. Equations on $\dot h_1$ and $\dot h_2$ do not depend on the shape of $\Omega$, but solutions to the Pontryagin maximum condition highly depend on the shape of $\Omega$.

Nonetheless, extremals can be found simultaneously for all $\Omega$ by machinery of the convex trigonometry. Let us use the generalized polar change of coordinates (see Subsec.~\ref{subsec:CT_properties}):
\[
	h_1 = H\cos_{\Omega^\polar}\theta^\polar\qquad\mbox{and}\qquad h_2=H\sin_{\Omega^\polar}\theta^\polar,
\]
\noindent since $H=s_{\Omega}(h_1,h_2)$. Moreover, if $(u_1,u_2)$ is an optimal control, then $h_1u_1+h_2u_2=H$ and $u\in\partial\Omega$. Hence, using the generalized Pythagorean identity (see Subsec.~\ref{subsec:CT_properties}), we get
\[
	u_1=\cos_\Omega\theta\qquad\mbox{and}\qquad u_2=\sin_\Omega\theta
\]
\noindent for an angle $\theta\leftrightarrow\theta^\polar$. 

Let us emphasize that the optimal control $u=(\cos_\Omega\theta,\sin_\Omega\theta)$  can be easily recovered from the angle $\theta^\polar(t)$ by the relation $\theta\leftrightarrow\theta^\polar$. Alternatively, we may use the differentiation formulae (see Subsec.~\ref{subsec:CT_properties}):
\[
	\dot{\theta^\polar}u_1=\dot{\theta^\polar}\sin'_{\Omega^\polar}\theta^\polar = \frac{d}{dt}\sin_{\Omega^\polar}\theta^\polar
	\quad\mbox{and}\quad
	\dot{\theta^\polar}u_2=-\dot{\theta^\polar}\cos'_{\Omega^\polar}\theta^\polar = -\frac{d}{dt}\cos_{\Omega^\polar}\theta^\polar.
\]
\noindent Thus we find $u_1$ and $u_2$ dividing by $\dot{\theta^\polar}$. 

So the last thing we need is to find $\theta^\polar$. Let us compute the derivative of $\theta^\polar$. According to the inverse polar change of coordinates (see Subsec.~\ref{subsec:CT_properties}) for a.e.\ $t$ we have
\[
	\dot\theta^\polar = \frac{h_1\dot h_2-h_2\dot h_1}{s_\Omega^2(h_1,h_2)} = -\frac{H^2\cos^2_{\Omega^\polar}\theta\cos_\Omega\theta + H^2\cos_{\Omega^\polar}\theta^\polar\sin_{\Omega^\polar}\theta^\polar\sin_{\Omega}\theta}{H^2} = -\cos_{\Omega^\polar}\theta^\polar.
\]

First, consider the case $p\ne 0$. We have 
\[
	y = \frac{h_1}{p} = \frac{H}{p}\cos_{\Omega^\polar}\theta^\polar.
\]
\noindent Let us find $x$. Since $\dot\theta^\polar=-\cos_{\Omega^\polar}\theta^\polar$, the derivative $\dot\theta^\polar$ is continuous. Hence $\theta^\polar\in C^2$. Thus, the functions $\cos_{\Omega^\polar}\theta^\polar(t)$ and $\sin_{\Omega^\polar}\theta^\polar(t)$ are Lipschitz continuous (see Subsec.~\ref{subsec:CT_properties}), and using \eqref{eq:lobachevsky:control_system} we get
\[
		\dot x = yu_1=\frac{H}{p}\cos_{\Omega^\polar}\theta^\polar\cos_\Omega\theta = 
			-\frac{H}{p}\cos_\Omega\theta\dot{\theta^\polar} = -\frac{H}{p}\frac{d}{dt}\sin_{\Omega^\polar}\theta^\polar;
\]
\noindent since $(\cos_{\Omega^\polar}\theta^\polar)'=-\sin_\Omega\theta$ and $(\sin_{\Omega^\polar}\theta^\polar)'=\cos_\Omega\theta$ by the differentiation formulae (see Subsec.~\ref{subsec:CT_properties}). Recall that $H=\const$ and $p=\const$, so
\[
	x=x_0 - \frac{H}{p}\sin_{\Omega^\polar}\theta^\polar.
\]

Thus, for the case $p\ne 0$ we have proved that Finsler geodesics on the Lobachevsky plane coincide with parts of boundary of the polar set $\Omega^\polar$, which is stretched $\frac{H}{|p|}$ times, rotated $\pm90^\polar$ (depending on the sign of $p$) and moved horizontally to the distance $x_0$. These geodesics are naturally called \textit{horizontal}.

A natural parametrization of geodesics is given by solutions of the equation
\begin{equation}
\label{eq:lobachevsky:natural_param}
	\dot{\theta^\polar} = -\cos_{\Omega^\polar}\theta^\polar. 
\end{equation}
\noindent The topological structure of solutions to this equation is very simple. The point $Q_{\theta^\polar}=(\cos_{\Omega^\polar}\theta^\polar,\sin_{\Omega^\polar}\theta^\polar)$ moves along $\partial\Omega^\polar$. The intersection points of $\partial\Omega^\polar$ with the vertical axis are fixed points. In the right half-plane, $Q_{\theta^\polar}$ moves clockwise, and in the left half-plane, $Q_{\theta^\polar}$ moves counterclockwise (see Fig. \ref{fig:lobachevsky}). Note that if the set $\Omega$ is not symmetric w.r.t. the origin, then the distance function on $L^2$ is not symmetric too, $\mathrm{dist}(A,B)\ne\mathrm{dist}(B,A)$. So, if $p\ne 0$, then geodesics going to the left and to the right are different. In the case, when the set $\Omega$ is symmetric, $L^2$ becomes a metric space and geodesics going to the left and to the right are represented by the same curves.

\begin{figure}[ht]
	\centering
	\begin{subfigure}{0.18\textwidth}
		\includegraphics[width=\textwidth]{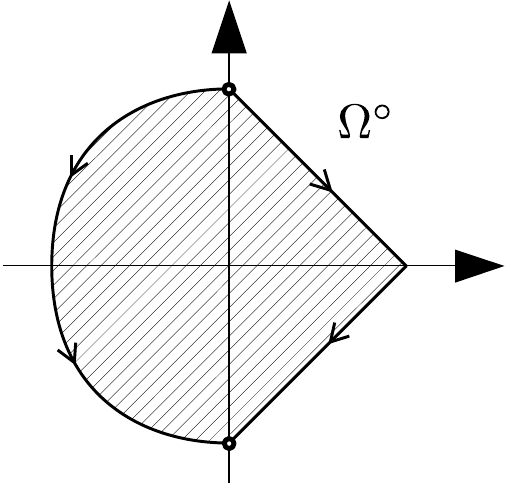}
	\end{subfigure}
	\hspace{1cm}
	\begin{subfigure}{0.45\textwidth}
		\includegraphics[width=\textwidth]{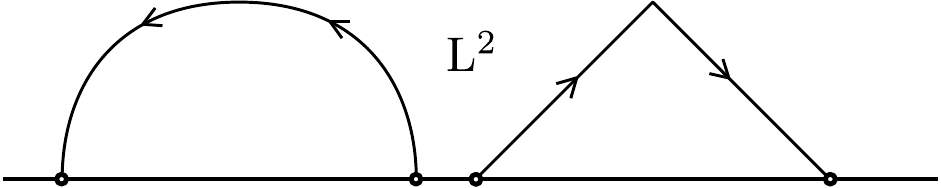}
	\end{subfigure}
	\caption{Examples of the polar set $\Omega^\polar$ and Finsler geodesics on $L^2$.}
	\label{fig:lobachevsky}
\end{figure}

Equation \eqref{eq:lobachevsky:natural_param} can be explicitly integrated in many cases. For example, if $\Omega$ is a convex polygon, then $\Omega^\polar$ is a polygon too and the functions $\cos_{\Omega^\polar}\theta^\polar$ and $\sin_{\Omega^\polar}\theta^\polar$ are linear in $\theta^\polar$ while the point $Q_{\theta^\polar}$ moves along an edge of $\Omega^\polar$. So in this case the equation $\dot{\theta^\polar} = -\cos_{\Omega^\polar}\theta^\polar$ on an edge takes the form $\dot\theta^\polar=a\theta^\polar + b$ and can be integrated easily. If $\Omega$ is an ellipse (see Subsec.~\ref{subsec:explicit_sets}), then equation~\eqref{eq:lobachevsky:natural_param} takes the form
\[
	\dot s = \frac{-a\cos s - x_0}{ab - y_0a\sin s+x_0b\cos s},
\]
\noindent which can be integrated explicitly by elementary functions.

Now consider the last case $p=0$. We have $h_1\equiv 0$ and $h_2=\const\ne 0$. If $h_2>0$, then the optimal control $u=(u_1,u_2)\in\partial \Omega$ has maximal second coordinate $u_2$, and $u$ is the upper point of $\Omega$ (if it is unique) or moves arbitrarily along the upper edge of $\Omega$. The case $h_2<0$ is similar. It is easy to see that, in both cases, the described control is optimal, since the coordinate $y$ takes maximal (or minimal) possible value at any given time~$T$. These geodesics are naturally called \textit{vertical}.

All vertical geodesics moving up from a given point $(x_0,y_0)\in L^2$ form a domain in~$L^2$ bounded by parts of two lines $(y-y_0)/(x-x_0)=u_2^0/u_1^0$, $y\ge y_0$, which appear when the control $u$ is constant, $u(t)=(u^0_1,u^0_2)$, and sits at the left (or right) end of the upper edge of $\Omega$. The similar fact holds for vertical geodesics moving down.

Summarizing, we get the following full description of geodesics in the Finsler problem on the Lobachevsky hyperbolic plane:

\begin{thm}
\label{thm:FinslerLobachevsky}
	There are two types of geodesics in the sub-Finsler problem on the Lobachevsky hyperbolic plane~\eqref{eq:lobachevsky:control_system}:
	
	\begin{enumerate}
	\item[$(h)$] Horizontal geodesics come from the polar set~$\Omega^\polar$. To obtain a horizontal geodesic one should $(i)$ rotate~$\Omega^\polar$ by $\pm 90^\circ$, $(ii)$ stretch it $\lambda>0$ times, $(iii)$ take the upper part of the boundary, and $(iv)$ move this part horizontally to any distance. If $\Omega^\polar$ was rotated clockwise then the motion on the geodesic is counterclockwise and vice versa (see Fig.~$\ref{fig:lobachevsky}$).
	
	\item[$(v)$] To obtain a vertical geodesic moving up one should choose an arbitrary measurable control $u(t)\in\Omega$ having maximal possible $u_2$-coordinate in~$\Omega$ for a.e.~$t$. Then the corresponding vertical geodesic starting at $(x_0,y_0)$ has the following form:
	\[
		x(t)=x_0+y_0\int_0^t e^{u_2s}u_1(s)\,ds;\qquad y(t)=y_0 e^{u_2t}.
	\]
	Vertical geodesics moving down are constructed in the same way by taking $u(t)\in\Omega$ having minimal possible $u_2$-coordinate in~$\Omega$ for all $t$.
	\end{enumerate}
\end{thm}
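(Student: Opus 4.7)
The plan is to apply the Pontryagin maximum principle (PMP) and then exploit convex trigonometry to parameterize the resulting extremals in a single formula, valid for arbitrary $\Omega$. First I would write out the PMP Hamiltonian $\mathcal{H}=pyu_1+qyu_2$ and its maximum $H=s_\Omega(h_1,h_2)$ with $h_1=py$, $h_2=qy$. Conservation of energy gives $H=\const$, and the standard argument ruling out $H=0$ (which would force $p=q=0$ on the open half-plane $y>0$) leaves $H>0$. Thus $(h_1,h_2)$ traces the boundary of the stretched polar set $H\,\partial\Omega^\polar$, and I can introduce the angle via $h_1=H\cos_{\Omega^\polar}\theta^\polar$, $h_2=H\sin_{\Omega^\polar}\theta^\polar$. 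The maximum condition $h_1u_1+h_2u_2=H$ combined with the generalized Pythagorean identity forces the admissible controls to be $u_1=\cos_\Omega\theta$, $u_2=\sin_\Omega\theta$ for every $\theta\leftrightarrow\theta^\polar$.

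Next I would compute $\dot\theta^\polar$ using the inverse polar change of coordinates applied to $(h_1,h_2)$; substituting $\dot h_1=h_1u_2$, $\dot h_2=-h_1u_1$ together with the parameterizations above leads to the clean ODE $\dot\theta^\polar=-\cos_{\Omega^\polar}\theta^\polar$. This equation is the single governing equation in the generic case, and from its continuity I get $\theta^\polar\in C^1$ (so that $\cos_{\Omega^\polar}\theta^\polar$ and $\sin_{\Omega^\polar}\theta^\polar$ are Lipschitz), which justifies the subsequent use of the differentiation formulae.

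Then I split on whether $p$ vanishes. For $p\ne 0$ I recover $y=(H/p)\cos_{\Omega^\polar}\theta^\polar$ directly from $h_1=py$, and integrate $\dot x=yu_1=-(H/p)\,\frac{d}{dt}\sin_{\Omega^\polar}\theta^\polar$ to get $x=x_0-(H/p)\sin_{\Omega^\polar}\theta^\polar$. Geometrically this is exactly the upper part of $\partial\Omega^\polar$ rotated by $\pm 90^\circ$ (sign of $p$), rescaled by $|H/p|$, and translated horizontally; the direction of traversal agrees with the sign analysis of $\dot\theta^\polar=-\cos_{\Omega^\polar}\theta^\polar$ on $\partial\Omega^\polar$ (clockwise in the right half-plane, counterclockwise in the left). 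For $p=0$ the conserved quantities give $h_1\equiv 0$, $h_2\equiv\const\ne 0$, so the maximum condition degenerates and simply selects any measurable $u(t)\in\Omega$ whose $u_2$-coordinate is extremal (upper or lower edge of $\Omega$). The system~\eqref{eq:lobachevsky:control_system} for $y$ becomes $\dot y=u_2 y$ with constant $u_2$, so $y(t)=y_0 e^{u_2 t}$, and $x(t)$ is obtained by integrating $\dot x=y u_1$.

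The main obstacle will be optimality of the vertical geodesics: PMP is only necessary, and along the $p=0$ branch the maximized Hamiltonian does not pin down $u_1$ when the extremal edge of $\Omega$ is not a single point. I would close that gap by observing that among all admissible trajectories starting at $(x_0,y_0)$ the value $y(T)=y_0\exp\!\int_0^T u_2(s)\,ds$ is maximized (resp.\ minimized) precisely by choosing $u_2(t)$ equal to the maximal (resp.\ minimal) $y$-coordinate of $\Omega$ for a.e.\ $t$, so every trajectory of that form is a time-optimal vertical geodesic regardless of the accompanying $u_1$. Assembling the two cases yields the statement.
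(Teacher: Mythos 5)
Your proposal is correct and follows essentially the same route as the paper: the same PMP setup, the same exclusion of $H=0$, the same convex-trigonometric parametrization leading to $\dot\theta^\polar=-\cos_{\Omega^\polar}\theta^\polar$, the same case split on $p$, and the same optimality argument for the vertical geodesics via maximizing $y(T)$. No gaps.
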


\section{Second order ODEs containing functions of convex trigonometry} 
\label{sec:second-order}

A lot of problems considered below can be reduced to solving a differential inclusion containing functions $\cos_\Omega$ and $\sin_\Omega$ of the following form\footnote{Results similar to those obtained in the present section are also fulfilled for the inclusion $\ddot \theta(t) \in -\mathcal{U}'(\theta(t))$ by the bipolar theorem.}:
\begin{equation}
\label{eq:general_ct_ode}
	\ddot \theta^\polar(t) \in -\mathcal{U}'(\theta^\polar(t))\mbox{ for a.e. }t,
\end{equation}
\noindent where $\theta^\polar(t)\in W^2_\infty$ (i.e.,\ $\dot\theta^\polar$ is a Lipschitz continuous functions), $\mathcal{U}(\theta^\polar)=f(\cos_{\Omega^\polar}\theta^\polar,\sin_{\Omega^\polar}\theta^\polar)$, $f(p,q)$ is a sufficiently smooth function, and the symbol $\mathcal{U}'$ denotes
\[
	\mathcal{U}'(\theta^\polar)=\Big\{-f_p\sin_\Omega\theta + f_q\cos_\Omega\theta\quad\mbox{for all }\theta\leftrightarrow\theta^\polar\Big\}.
\]
\noindent We use the previous notation, since $\mathcal{U}(\theta^\polar)$ has derivative for a.e.\ $\theta^\polar$ which is equal to $-f_p\sin_\Omega\theta + f_q\sin_\Omega\theta$ by the differentiation formulae (see Subsec.~\ref{subsec:CT_properties}). The derivative $\mathcal{U}'(\theta^\polar)$ certainly exists if there exists a unique $\theta\leftrightarrow\theta^\polar$ (up to the period $2\mathbb{S}^\polar=2\mathbb{S}(\Omega^\polar)$). In the opposite case, $\mathcal{U}(\theta^\polar)$ has left and right derivatives, and the interval between them we denoted by $\mathcal{U}'(\theta^\polar)$. In the last case, the left and right derivatives coincide iff $df(\cos_{\Omega^\polar}\theta^\polar,\sin_{\Omega^\polar}\theta^\polar)\perp(\cos_{\Omega^\polar}\theta^\polar,\sin_{\Omega^\polar}\theta^\polar)$. 

Usually, if $\mathcal{U}'(\theta^\polar)$ consists of  a unique element for some $\theta^\polar$, $\mathcal{U}'(\theta^\polar)=\{a\}$, we will simply write $\mathcal{U}'(\theta^\polar)=a$ for short.

\begin{prop}
\label{prop:existance_for_general_ct_ode}
	For any given initial data $\theta^\polar_0,\theta^\polar_1\in\R$, there exists a solution $\hat\theta^\polar(t)$ to~\eqref{eq:general_ct_ode} satisfying $\hat\theta^\polar(0)=\theta^\polar_0$ and $\dot{\hat\theta}^\polar(0)=\theta^\polar_1$ and defined for all $t\in\R$.
\end{prop}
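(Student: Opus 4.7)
The plan is to rewrite \eqref{eq:general_ct_ode} as a first-order differential inclusion on the phase plane, produce a conserved ``energy'' that yields a priori bounds, then combine a standard local existence result for upper-semicontinuous convex-valued inclusions with the energy estimate to extend solutions globally.

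Set $\omega(t) = \dot\theta^\circ(t)$ and consider the inclusion
\[
\bigl(\dot\theta^\circ(t),\dot\omega(t)\bigr) \in F(\theta^\circ(t),\omega(t)) := \bigl\{\omega\bigr\} \times \bigl(-\mathcal{U}'(\theta^\circ)\bigr).
\]
The first step is to verify that $F$ is an upper-semicontinuous set-valued map with nonempty convex compact values. This follows from the structure of $\mathcal{U}$: since $\cos_{\Omega^\circ},\sin_{\Omega^\circ}$ are Lipschitz and $f$ is smooth, $\mathcal{U}$ is Lipschitz and $2\mathbb{S}^\circ$-periodic; moreover $\mathcal{U}'(\theta^\circ)$, as defined in the excerpt, coincides with the Clarke subdifferential of $\mathcal{U}$ outside a countable set (namely the angles $\theta^\circ$ corresponding to vertices of $\Omega^\circ$), and at the remaining points it is exactly the closed interval between the one-sided derivatives. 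This set-valued map is automatically convex, compact-valued and upper semicontinuous. With these properties, the classical local existence theorem for differential inclusions (e.g.\ Filippov, or Aubin--Cellina, \emph{Differential Inclusions}, Ch.~2) produces an absolutely continuous solution $(\theta^\circ(t),\omega(t))$ on a maximal interval, with $\omega$ Lipschitz.

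The second step is energy conservation. I claim that $E(t) := \tfrac12 \omega(t)^2 + \mathcal{U}(\theta^\circ(t))$ is constant. Since $\omega\in W^1_\infty$ and $\mathcal{U}$ is Lipschitz, both $t\mapsto \tfrac12\omega(t)^2$ and $t\mapsto \mathcal{U}(\theta^\circ(t))$ are absolutely continuous. For a.e.\ $t$ one has $\frac{d}{dt}\tfrac12\omega^2 = \omega\dot\omega$ with $\dot\omega\in-\mathcal{U}'(\theta^\circ)$, so $\dot\omega = -w(t)$ for some measurable selection $w(t)\in\mathcal{U}'(\theta^\circ(t))$. By the Lipschitz chain rule, for a.e.\ $t$ there is a (possibly different) selection $v(t)\in\mathcal{U}'(\theta^\circ(t))$ with $\frac{d}{dt}\mathcal{U}(\theta^\circ) = v(t)\omega(t)$. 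The key observation is that on the set where $\mathcal{U}'(\theta^\circ(t))$ is not a singleton (which is a union of level sets of $\theta^\circ$ over a countable exceptional set of $\theta^\circ$-values), the AC function $\theta^\circ$ has $\omega = 0$ almost everywhere by the Serrin--Varberg/Luzin $N^{-1}$ property. Hence $v(t)\omega(t) = w(t)\omega(t)$ a.e., and $\dot E = (v(t)-w(t))\omega(t) = 0$ a.e., proving $E \equiv \mathrm{const}$.

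The third step is the global extension. Since $\mathcal{U}$ is continuous and $2\mathbb{S}^\circ$-periodic, it attains its bounds, so $|\omega(t)| \le \sqrt{2(E-\min\mathcal{U})}$ uniformly in $t$. Consequently $\theta^\circ(t)$ grows at most linearly, and the solution cannot blow up in finite time; a standard continuation argument extends it to all of $\mathbb{R}$, and initial conditions $\theta^\circ(0)=\theta^\circ_0$, $\dot\theta^\circ(0)=\theta^\circ_1$ can be prescribed freely at the start of the local existence step. The main technical obstacle is the energy identity at the exceptional angles where $\partial\Omega^\circ$ has corners, handled as above via the AC--Luzin argument; the remaining ingredients are standard.
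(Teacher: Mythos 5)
Your proof is correct, and its first step (recasting \eqref{eq:general_ct_ode} as a first-order inclusion with nonempty convex compact values and an upper semicontinuous right-hand side, then invoking Filippov's existence theorem) is exactly the paper's argument. Where you diverge is the global extension: you prove conservation of the energy $E=\tfrac12\omega^2+\mathcal{U}(\theta^\polar)$ in order to bound $|\dot\theta^\polar|$, whereas the paper simply observes that $\mathcal{U}'$ is a bounded multifunction, so $\ddot{\hat\theta}^\polar$ is uniformly bounded, $\dot{\hat\theta}^\polar$ grows at most linearly, and no finite-time blow-up can occur. Your energy argument is sound --- in particular the treatment of the exceptional set via the Luzin $N^{-1}$ property (a.e.\ vanishing of $\omega$ on the preimage of the countable set of corner angles) is essentially the same device the paper uses later --- but it is heavier machinery than the proposition requires, and it amounts to folding the proof of the paper's Theorem~\ref{thm:general_energy_integral} into this existence statement. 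The paper deliberately keeps the two separate: existence needs only boundedness of $\mathcal{U}'$, while the energy integral is established afterwards and then exploited for the uniqueness and non-uniqueness analysis. So your route buys nothing extra here, but it does no harm; if you keep it, you should at least note that the energy identity is an independent result (the paper's Theorem~\ref{thm:general_energy_integral}) rather than a throwaway step inside the existence proof.
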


\begin{proof}
	The multivalued function $\mathcal{U}'$ is upper semicontinuous at any point $\theta^\polar$ in the following sense: for any $\theta^\polar\in\R$ and $\varepsilon>0$ there exists $\delta>0$ such that $\mathcal{U}'(\tilde\theta^\polar)$ belongs to the $\varepsilon$-neighborhood of $\mathcal{U}'(\theta^\polar)$ for any $\tilde\theta^\polar\in\R$ in the $\delta$-neighborhood of $\theta^\polar$ (this fact follows immediately from the differentiation formulae in Subsec.~\ref{subsec:CT_properties}). Moreover, the set $\mathcal{U}'(\theta^\polar)$ is a non empty convex compactum for any $\theta^\polar$. Hence, for any initial data $\hat\theta^\polar(0)=\theta^\polar_0$, $\dot{\hat\theta}^\polar(0)=\theta^\polar_1$, there exists a solution to~\eqref{eq:general_ct_ode} in a neighborhood of $t=0$ by Filippov's theorem (see \cite{Filippov}). Moreover, any solution can be extended to the whole line $t\in\R$, since the function $\mathcal{U}'$ is bounded.
\end{proof}

Nonetheless, uniqueness of solution to~\eqref{eq:general_ct_ode} (with given initial data) may fail. Despite the fact that $\mathcal{U}(\theta^\polar)$ has derivative w.r.t.\ $\theta^\polar$ for a.e.\ $\theta^\polar$, existence of the derivative of function $\mathcal{U}(\hat\theta^\polar(t))$ w.r.t.\ $t$ needs additional verification. 

In the present section we will prove some general facts about differential inclusion~\eqref{eq:general_ct_ode}. It appears that uniqueness may fail only in very specific situations. Moreover, these situations correspond to singular extremals in optimal control problems considered below. 

\subsection{First integral of energy}

\begin{thm}
\label{thm:general_energy_integral}
The function
	$\mathbb{E}=\frac12(\dot\theta^\polar)^2+\mathcal{U}(\theta^\polar)$ is a first integral for~\eqref{eq:general_ct_ode}, i.e.,\ $\mathbb{E}$ is constant along any solution $\hat\theta^\polar(t)$ to \eqref{eq:general_ct_ode}. Moreover, if $\tilde\theta^\polar(t)$ is an arbitrary $C^1$-function (on an interval), $\dot{\tilde\theta}^\polar(t)\ne 0$ for all $t$, and~$\mathbb{E}$ is constant along $\tilde\theta^\polar$, then $\tilde\theta^\polar$ belongs to $W^2_\infty$ on the interval and it is a solution to~\eqref{eq:general_ct_ode}.
\end{thm}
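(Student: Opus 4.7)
The plan is to show that $\mathbb{E}(t)$ is absolutely continuous along $\hat\theta^\polar$ and that $\dot{\mathbb{E}}(t) = 0$ almost everywhere. First note that $\mathcal{U}$ is a Lipschitz function of $\theta^\polar$: the pair $(\cos_{\Omega^\polar}\theta^\polar, \sin_{\Omega^\polar}\theta^\polar)$ is a Lipschitz parameterization of $\partial\Omega^\polar$ by Property~II of Subsec.~\ref{subsec:CT_properties}, and $f$ is smooth. Since $\hat\theta^\polar \in W^2_\infty$ means $\dot{\hat\theta}^\polar$ is Lipschitz, both $\tfrac12(\dot{\hat\theta}^\polar)^2$ and $\mathcal{U}(\hat\theta^\polar)$ are Lipschitz in $t$, so $\mathbb{E}$ is Lipschitz and it suffices to verify the pointwise identity
$$\dot{\mathbb{E}}(t) \;=\; \dot{\hat\theta}^\polar(t)\ddot{\hat\theta}^\polar(t) + \frac{d}{dt}\mathcal{U}(\hat\theta^\polar(t)) \;=\; 0$$
at almost every $t$.

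The only delicate point is evaluating $\frac{d}{dt}\mathcal{U}(\hat\theta^\polar(t))$, because $\mathcal{U}$ may fail to be differentiable at isolated values of $\theta^\polar$. I would treat two cases. If $\dot{\hat\theta}^\polar(t_0) = 0$ and $\ddot{\hat\theta}^\polar(t_0)$ exists, then Taylor expansion gives $\hat\theta^\polar(t_0+h) - \hat\theta^\polar(t_0) = O(h^2)$, so by Lipschitzness of $\mathcal{U}$ we have $\mathcal{U}(\hat\theta^\polar(t_0+h)) - \mathcal{U}(\hat\theta^\polar(t_0)) = O(h^2)$; hence $\frac{d}{dt}\mathcal{U}(\hat\theta^\polar(t))$ exists at $t_0$ and equals $0$, and together with $\dot{\hat\theta}^\polar(t_0)\ddot{\hat\theta}^\polar(t_0) = 0$ this gives $\dot{\mathbb{E}}(t_0) = 0$. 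On the open set $\{\dot{\hat\theta}^\polar \ne 0\}$ the map $\hat\theta^\polar$ is locally bi-Lipschitz, so the preimage of the null set of non-differentiability points of $\mathcal{U}$ is null; at a.e.\ $t$ in this open set, $\mathcal{U}'(\hat\theta^\polar(t))$ is single-valued, the classical chain rule gives $\frac{d}{dt}\mathcal{U}(\hat\theta^\polar(t)) = \mathcal{U}'(\hat\theta^\polar(t))\dot{\hat\theta}^\polar(t)$, and combining with $\ddot{\hat\theta}^\polar(t) = -\mathcal{U}'(\hat\theta^\polar(t))$ (an equality, since the right-hand side is a single value) yields $\dot{\mathbb{E}}(t) = 0$. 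This exhausts $t$ almost everywhere, so $\mathbb{E}$ is constant along $\hat\theta^\polar$.

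For the converse, assume $\tilde\theta^\polar \in C^1$ with $\dot{\tilde\theta}^\polar \ne 0$ throughout and $\tfrac12(\dot{\tilde\theta}^\polar)^2 + \mathcal{U}(\tilde\theta^\polar) \equiv \mathbb{E}_0$. Since $\dot{\tilde\theta}^\polar$ is continuous and nowhere zero, it has constant sign, so $\dot{\tilde\theta}^\polar = \pm\sqrt{2(\mathbb{E}_0 - \mathcal{U}(\tilde\theta^\polar))}$; the radicand is Lipschitz in $t$ (since $\mathcal{U}\circ\tilde\theta^\polar$ is Lipschitz) and bounded below by a positive constant on every compact subinterval, so the square root is Lipschitz there, proving $\tilde\theta^\polar \in W^2_\infty$ locally. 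Next, differentiating the identity $\mathbb{E} \equiv \mathbb{E}_0$ at a.e.\ $t$ and applying the bi-Lipschitz/chain-rule step of the previous paragraph (clean here because $\dot{\tilde\theta}^\polar$ never vanishes), we obtain $\dot{\tilde\theta}^\polar\ddot{\tilde\theta}^\polar + \mathcal{U}'(\tilde\theta^\polar)\dot{\tilde\theta}^\polar = 0$ a.e.; dividing by $\dot{\tilde\theta}^\polar \ne 0$ yields $\ddot{\tilde\theta}^\polar(t) \in -\mathcal{U}'(\tilde\theta^\polar(t))$ a.e., which is exactly \eqref{eq:general_ct_ode}.

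The main obstacle is justifying the chain rule for the composition of the Lipschitz (but generally not $C^1$) function $\mathcal{U}$ with $\theta^\polar$: a priori, multi-valuedness of $\mathcal{U}'$ at boundary corners of $\Omega$ could invalidate the naive differentiation. The resolution is the dichotomy above, where local bi-Lipschitzness on $\{\dot\theta^\polar \ne 0\}$ transports null sets of bad values to null sets of $t$, and quadratic vanishing of $\theta^\polar(t_0+h) - \theta^\polar(t_0)$ at critical points of $\theta^\polar$ overcomes the finite Lipschitz constant of $\mathcal{U}$ and forces $\frac{d}{dt}\mathcal{U}(\theta^\polar)$ to vanish there.
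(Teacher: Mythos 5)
Your proof is correct and follows essentially the same route as the paper's: split $t$ into the set where $\dot{\hat\theta}^\polar$ vanishes (where superlinear vanishing of $\hat\theta^\polar(t_0+h)-\hat\theta^\polar(t_0)$ plus Lipschitzness of $\mathcal{U}$ kills $\frac{d}{dt}\mathcal{U}(\hat\theta^\polar)$) and its complement (where local bi-Lipschitzness makes $\mathcal{U}'$ single-valued a.e.\ and the chain rule applies), with the converse obtained by writing $\dot{\tilde\theta}^\polar=\pm\sqrt{2(\mathbb{E}-\mathcal{U}(\tilde\theta^\polar))}$. The only omission is the final upgrade from $W^2_{\infty,\mathrm{loc}}$ to $W^2_\infty$ on the whole interval, which follows in one line from the boundedness of $\mathcal{U}'$ once the inclusion is established, as the paper notes.
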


\begin{proof}
	We start with the first part of the theorem. Denote $Z=\{t:{\dot{\hat\theta}^\polar}(t)=0\}$. For a.e.~$\theta^\polar$ there exists a unique $\theta\leftrightarrow\theta^\polar$, since $\partial\Omega^\polar$ has a countable number of corners at most. Hence, for a.e.\ $t\not\in Z$ there exists a unique $\hat\theta(t)\leftrightarrow\hat\theta^\polar(t)$ and the set $\mathcal{U}'(\hat\theta^\polar(t))$ contains a unique element, which must coincide with $-\ddot{\hat\theta}^\polar$. Therefore, the function $\mathcal{U}(\hat\theta^\polar(t))$ has derivative for a.e.\ $t\not\in Z$, and
	\[
		\dot{\mathbb E} = \ddot{\hat\theta}^\polar(t)\dot{\hat\theta}^\polar(t) + \dot{\hat\theta}^\polar(t)\mathcal{U}'(\hat\theta^\polar(t))=0\quad\mbox{for a.e.\ }t\not\in Z.
	\]
	
	It remains to consider the case $t\in Z$. Since $\hat\theta^\polar\in W^2_\infty$, there exists $\ddot{\hat\theta}^\polar(t)$ for a.e.\ $t\in Z$. Moreover, for all $t\in Z$, $\hat\theta^\polar(t+\tau)-\hat\theta^\polar(t)=o(\tau)$. Hence, $\frac{d}{dt}\mathcal{U}(\hat\theta^\polar(t))=0$ for all $t\in Z$, since the functions $\cos_{\Omega^\polar}$ and $\sin_{\Omega^\polar}$ are Lipschitz continuous. Therefore, we have 
	\[
		\dot{\mathbb E}=\ddot{\hat\theta}^\polar(t)\dot{\hat\theta}^\polar(t) + \frac{d}{d\tau}\Big|_{\tau=0} \mathcal{U}(\hat\theta^\polar(t+\tau))=0
		\quad\mbox{for a.e.\ }t\in Z.
	\]
	
	\medskip
	
	Let us prove the second part. Consider a $C^1$-function $\tilde\theta^\polar$ such that $\dot{\tilde\theta}^\polar(t)\ne 0$  
	for all~$t$ on an interval. Assume that $\dot{\tilde\theta}^\polar(t)>0$ for all $t$ (the case $\dot{\tilde\theta}^\polar(t)<0$ for all $t$ is similar). In this case, $\dot{\tilde\theta}^\polar = \sqrt{2}(\mathbb{E}-\mathcal{U}(\tilde\theta^\polar))^{1/2}$. Therefore, $\dot{\tilde\theta}^\polar$ is locally Lipschitz continuous and $\tilde\theta^\polar\in W^2_{\infty,\mathrm{loc}}$. Hence $0=\dot{\mathbb{E}}=\dot{\tilde\theta}^\polar\ddot{\tilde\theta}^\polar + \mathcal{U}'(\tilde\theta^\polar)\dot{\tilde\theta}^\polar$. So $\ddot{\tilde\theta}^\polar = -\mathcal{U}'(\tilde\theta^\polar)$ for a.e.\ $t$, since $\dot{\tilde\theta}^\polar(t)\ne 0$. It remains to say that $\mathcal{U}'$ is a bounded function, therefore $\tilde\theta^\polar\in W^2_\infty$.
	
\end{proof}

\subsection{Uniqueness of solution}

Let us now fix some initial data $\theta^\polar_0,\theta^\polar_1\in\R$:
\begin{equation}
\label{eq:general_ct_ode_initial_data}
	\theta^\polar(0)=\theta^\polar_0;\qquad \dot\theta^\polar(0)=\theta^\polar_1.
\end{equation}
If $\partial\Omega^\polar$ is sufficiently smooth, then uniqueness of solution with the given initial data must hold due to the classical Picard theorem.

\begin{prop}
	\label{prop:uniqueness_of_solution_when_Omega_smooth}
	Let $l$ be an open segment on $\partial\Omega^\polar$ and $(\cos_{\Omega^\polar}\theta^\polar_0,\sin_{\Omega^\polar}\theta^\polar_0)\in l$. If $l$ is $ W^2_\infty$, then there exists a unique solution $\hat\theta^\polar(t)$ to~\eqref{eq:general_ct_ode} in a neighborhood of $t=0$ satisfying initial data~\eqref{eq:general_ct_ode_initial_data}.
\end{prop}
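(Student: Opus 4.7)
The plan is to reduce the differential inclusion \eqref{eq:general_ct_ode} on a short time interval to a classical ODE with Lipschitz right-hand side, and then invoke Picard--Lindelöf. The key point is to verify that under the $W^2_\infty$-regularity hypothesis on $l$, the multivalued map $\mathcal{U}'$ is actually single-valued and Lipschitz continuous on a neighborhood of $\theta^\polar_0$.

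First I would note that since $P_{\theta^\polar_0}\in l$ and $l$ is a regular $W^2_\infty$-curve, by Remark~\ref{rm:smoothness_on_boundary_interval} applied to $\Omega^\polar$ the functions $\cos_{\Omega^\polar}$ and $\sin_{\Omega^\polar}$ are of class $W^2_\infty$ on an open interval $I\ni\theta^\polar_0$. Then I would apply Corollary~\ref{cor:curvature} with the roles of $\Omega$ and $\Omega^\polar$ interchanged, with $k=2$ and $p=\infty$: this gives that the correspondence restricted to $I$ is realized by a single-valued function $\theta(\theta^\polar)\in W^1_\infty(I)$, i.e.\ Lipschitz. In particular the compositions $\cos_\Omega\theta(\theta^\polar)$ and $\sin_\Omega\theta(\theta^\polar)$ are Lipschitz on $I$.

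Using smoothness of $f$ (we may assume $f\in C^2$, so that $f_p,f_q$ are locally Lipschitz), the formula
\[
\mathcal{U}'(\theta^\polar) = -f_p(\cos_{\Omega^\polar}\theta^\polar,\sin_{\Omega^\polar}\theta^\polar)\sin_\Omega\theta(\theta^\polar) + f_q(\cos_{\Omega^\polar}\theta^\polar,\sin_{\Omega^\polar}\theta^\polar)\cos_\Omega\theta(\theta^\polar)
\]
defines a single-valued, Lipschitz continuous function of $\theta^\polar$ on $I$. By continuity, any solution $\hat\theta^\polar(t)$ with $\hat\theta^\polar(0)=\theta^\polar_0$ remains in $I$ for $|t|$ sufficiently small, and on such a time interval \eqref{eq:general_ct_ode} becomes the genuine second-order ODE $\ddot\theta^\polar = -\mathcal{U}'(\theta^\polar)$ with Lipschitz right-hand side. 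Rewritten as a first-order system in $(\theta^\polar,\dot\theta^\polar)$, the classical Picard--Lindelöf theorem yields existence and uniqueness of the solution with prescribed data \eqref{eq:general_ct_ode_initial_data}.

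The main obstacle is the Lipschitz regularity of the dual-angle map $\theta(\theta^\polar)$ on $I$; this is where the assumption $W^2_\infty$ (rather than merely $C^1$) on $l$ is essential, since only the second-order regularity of the boundary translates into Lipschitz dependence of $\theta$ on $\theta^\polar$ via Corollary~\ref{cor:curvature}. A minor bookkeeping point is that the smoothness results of Section~\ref{sec:convex_trig} are stated globally for $\partial\Omega$, but Remark~\ref{rm:smoothness_on_boundary_interval} makes it explicit that they localize to any smooth open arc, which is exactly what is needed here.
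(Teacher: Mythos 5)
Your proof is correct and follows essentially the same route as the paper: localize the $W^2_\infty$ regularity via Proposition~\ref{prop:w_k_p} and Remark~\ref{rm:smoothness_on_boundary_interval}, conclude that $\mathcal{U}'$ is single-valued and Lipschitz near $\theta^\polar_0$, and apply the Picard theorem. The detour through Corollary~\ref{cor:curvature} is not strictly needed — since $(\cos_\Omega\theta,\sin_\Omega\theta)=(\sin'_{\Omega^\polar}\theta^\polar,-\cos'_{\Omega^\polar}\theta^\polar)$ by the differentiation formulae, the Lipschitz continuity of $\mathcal{U}'$ follows directly from $\cos_{\Omega^\polar},\sin_{\Omega^\polar}\in W^2_\infty$ — but your argument is equivalent and valid.
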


\begin{proof}
	Indeed, in this case, $\cos_{\Omega^\polar}$ and $\sin_{\Omega^\polar}$ are $W^2_\infty$-functions by Proposition~\ref{prop:w_k_p} (see Remark~\ref{rm:smoothness_on_boundary_interval}). Hence $\mathcal{U}\in W^2_\infty$ and $\mathcal{U}'\in W^1_\infty$, i.e.,\ $\mathcal{U}'$ is a (single-valued) Lipschitz continuous function. Therefore there exists a unique solution by the Picard theorem.
\end{proof}

Consequently, absence of uniqueness may be present only at points, where the boundary $\partial\Omega^\polar$ does not have $W^2_\infty$-smoothness. For example, if $\Omega^\polar$ is a polygon. Another example is the following one. If $1<p<2$, then the boundary of the set $\{x^p+y^p\le 1\}\subset\R^2$ is $W^2_q$-smooth for $q<1/(2-p)$, but is not $W^2_q$-smooth for $q>1/(2-p)$. So the previous proposition does not work here, and we will show absence of uniqueness for solution of Pontryagin maximum principle in these cases for all left-invariant sub-Finsler problems on all unimodular 3D Lie groups in Section~\ref{sec:3DLie}.

Let us now investigate the non-uniqueness phenomenon.

\begin{prop}
\label{prop:unique_solution_if_dot_theta_ne_zero}
	Suppose that $\theta^\polar_1\ne 0$. Then, there exists a unique solution~$\hat\theta^\polar(t)$ to~\eqref{eq:general_ct_ode} in a neighborhood of $t=0$ satisfying initial data~\eqref{eq:general_ct_ode_initial_data}. Moreover,
	\begin{equation}
	\label{eq:integrating_energy_theta_polar_derivative_not_null}
		\sqrt{2}t = \mathrm{sgn}\,{\theta^\polar_1}\int_{\theta^\polar_0}^{\hat\theta^\polar(t)}\frac{d\theta^\polar}{\sqrt{\mathbb{E} - \mathcal{U}(\theta^\polar)}},
	\end{equation}
	where $\mathbb{E}=\frac12(\theta^\polar_1)^2 + \mathcal{U}(\theta^\polar_0)$.
\end{prop}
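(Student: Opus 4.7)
The plan is to exploit the energy first integral from Theorem~\ref{thm:general_energy_integral} to reduce the second-order inclusion to a scalar separable ODE near $t=0$. First I would invoke Proposition~\ref{prop:existance_for_general_ct_ode} to obtain at least one solution $\hat\theta^\polar(t)\in W^2_\infty$ with the prescribed initial data. Since $\dot{\hat\theta}^\polar$ is Lipschitz continuous (as $\hat\theta^\polar\in W^2_\infty$) and $\dot{\hat\theta}^\polar(0)=\theta^\polar_1\ne 0$, it keeps the sign of $\theta^\polar_1$ on some interval $(-\delta,\delta)$. Theorem~\ref{thm:general_energy_integral} then gives
\[
\tfrac{1}{2}\bigl(\dot{\hat\theta}^\polar(t)\bigr)^{2}+\mathcal{U}(\hat\theta^\polar(t))\equiv\mathbb{E}=\tfrac{1}{2}(\theta^\polar_1)^{2}+\mathcal{U}(\theta^\polar_0),
\]
and solving for $\dot{\hat\theta}^\polar$ on $(-\delta,\delta)$ yields the scalar ODE
\[
\dot{\hat\theta}^\polar(t)=\mathrm{sgn}(\theta^\polar_1)\sqrt{2\bigl(\mathbb{E}-\mathcal{U}(\hat\theta^\polar(t))\bigr)}.
\]

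Next I would separate variables. Because $\mathbb{E}-\mathcal{U}(\theta^\polar_0)=\tfrac{1}{2}(\theta^\polar_1)^{2}>0$ and $\mathcal{U}$ is continuous (in fact Lipschitz, from the Lipschitz property of $\cos_{\Omega^\polar}$ and $\sin_{\Omega^\polar}$), the quantity $\mathbb{E}-\mathcal{U}(\theta^\polar)$ stays bounded below by a positive constant on some neighborhood $V$ of $\theta^\polar_0$. Shrinking $\delta$ so that $\hat\theta^\polar((-\delta,\delta))\subset V$, dividing by the square root and integrating from $0$ to $t$ immediately yields formula~\eqref{eq:integrating_energy_theta_polar_derivative_not_null}. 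For uniqueness, let $\tilde\theta^\polar$ be any other $W^2_\infty$ solution with the same initial data. The argument above applies verbatim to $\tilde\theta^\polar$, so it too satisfies~\eqref{eq:integrating_energy_theta_polar_derivative_not_null}. But the map $\theta^\polar\mapsto\int_{\theta^\polar_0}^{\theta^\polar}\frac{d\sigma}{\sqrt{\mathbb{E}-\mathcal{U}(\sigma)}}$ is continuous and strictly monotone near $\theta^\polar_0$, with derivative bounded away from~$0$, hence a local homeomorphism; inverting it determines $\hat\theta^\polar(t)$ and $\tilde\theta^\polar(t)$ uniquely from $t$, and they must coincide on a common neighborhood of~$0$.

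The main subtlety is that $\mathcal{U}'$ may be multi-valued, so a direct Picard argument on the second-order inclusion~\eqref{eq:general_ct_ode} is unavailable; the whole point is to bypass $\mathcal{U}'$ entirely by working with the Lipschitz function $\mathcal{U}$ through the energy integral, which only requires $\hat\theta^\polar\in W^2_\infty$ and has already been established in Theorem~\ref{thm:general_energy_integral}. The hypothesis $\theta^\polar_1\ne 0$ is essential: it keeps us strictly above the level $\{\mathcal{U}=\mathbb{E}\}$ at $t=0$, preventing the square root in the separated equation from vanishing and preserving its Lipschitz character near $\theta^\polar_0$. In the excluded case $\theta^\polar_1=0$ the square root degenerates at $t=0$ and the standard non-uniqueness phenomena for $\dot u=\sqrt{u}$-type equations become possible, which is precisely the mechanism underlying the singular extremals studied in the later sections of the paper.
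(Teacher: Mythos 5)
Your proposal is correct and follows essentially the same route as the paper's proof: both derive $\dot{\hat\theta}^\polar=\mathrm{sgn}(\theta^\polar_1)\sqrt{2(\mathbb{E}-\mathcal{U}(\hat\theta^\polar))}$ from the energy integral of Theorem~\ref{thm:general_energy_integral}, separate variables using that $\mathbb{E}-\mathcal{U}$ is bounded away from zero near $\theta^\polar_0$, and conclude uniqueness from the strict monotonicity (inverse function theorem) of the map $\theta^\polar\mapsto\int_{\theta^\polar_0}^{\theta^\polar}(\mathbb{E}-\mathcal{U}(\sigma))^{-1/2}\,d\sigma$. Your explicit appeal to Proposition~\ref{prop:existance_for_general_ct_ode} for existence and your remarks on why the Lipschitz energy function lets one bypass the multivalued $\mathcal{U}'$ are just elaborations of what the paper leaves implicit.
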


\begin{proof}
	Obviously, $|\dot{\hat\theta}^\polar(t)|=\sqrt{2}(\mathbb{E}-\mathcal{U}(\hat\theta^\polar(t)))^{1/2}$ by Theorem~\ref{thm:general_energy_integral}. Since $\mathcal{U}(\theta^\polar)<\mathbb{E}$ for $\theta^\polar$ in a neighborhood of $\theta^\polar_0$, we have
	\[
		\frac{\dot{\hat\theta}^\polar(t)}{\sqrt{\mathbb{E}-\mathcal{U}(\hat\theta^\polar(t))}} = \mathrm{sgn}\,{\theta^\polar_1} \sqrt{2}
	\]
	for $t$ in a neighborhood of $0$. Hence, \eqref{eq:integrating_energy_theta_polar_derivative_not_null} must hold. By the inverse function theorem, equation~\eqref{eq:integrating_energy_theta_polar_derivative_not_null} has a unique solution, since derivative (w.r.t. $\theta^\polar$) of the right-hand side is equal to $(\mathbb{E}-\mathcal{U}(\theta^\polar))^{-1/2}$, which is continuous and does not vanish in a neighborhood of~$\theta^\polar_0$.
\end{proof}

Hence, uniqueness may fail only at points, where $\dot\theta^\polar=0$. Moreover, even if $\dot\theta^\polar=0$ this does not guarantee absence of uniqueness (as for example in the case $\partial\Omega^\polar\in W^2_\infty$).

\begin{prop}
\label{prop:uniqueness_if_zero_not_in_U_derivative}
	Let $\theta^\polar_0\in\R$ and $\theta^\polar_1=0$. If $0\not\in\mathcal{U}'(\theta^\polar_0)$, then there exists a unique solution~$\hat\theta^\polar(t)$ to~\eqref{eq:general_ct_ode} in a neighborhood of $t=0$ satisfying initial data~\eqref{eq:general_ct_ode_initial_data}. This solution is even $\hat\theta^\polar(t)=\hat\theta^\polar(-t)$ for $t$ lying in the neighborhood and satisfies
	\begin{equation}
	\label{eq:integrating_energy_theta_polar_derivative_null_U_not_null}
		\sqrt{2}|t| \,\mathrm{sgn}\,\mathcal{U'}(\theta^\polar_0) = \int_{\theta^\polar_0}^{\hat\theta^\polar(t)}\frac{d\theta^\polar}{\sqrt{\mathbb{E} - \mathcal{U}(\theta^\polar)}}.
	\end{equation}
\end{prop}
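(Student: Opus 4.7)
The strategy is to use the sign-definite ``force'' arising from the hypothesis $0\notin\mathcal{U}'(\theta^\polar_0)$ to reduce everything to the regime $\dot\theta^\polar\ne 0$ already covered by Proposition~\ref{prop:unique_solution_if_dot_theta_ne_zero}, modulo a careful limit at $t=0$. Existence of some $\hat\theta^\polar(t)$ with the prescribed Cauchy data is given by Proposition~\ref{prop:existance_for_general_ct_ode}. Without loss of generality I would assume $\mathcal{U}'(\theta^\polar_0)\subset(0,+\infty)$, so that $\mathrm{sgn}\,\mathcal{U}'(\theta^\polar_0)=+1$; the opposite case is symmetric and yields the sign $-1$.

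The first step is to exploit the upper semicontinuity of the multivalued map $\mathcal{U}'$ established inside the proof of Proposition~\ref{prop:existance_for_general_ct_ode}. Together with compactness of its values, this yields constants $\delta,c>0$ with $\mathcal{U}'(\theta^\polar)\subset[c,+\infty)$ for all $|\theta^\polar-\theta^\polar_0|<\delta$. Therefore, as long as $\hat\theta^\polar(t)$ stays in this neighborhood, $\ddot{\hat\theta}^\polar(t)\le -c$ for a.e.\ $t$. Integrating twice from $t=0$ with $\dot{\hat\theta}^\polar(0)=0$ gives $\dot{\hat\theta}^\polar(t)\le -ct$ for small $t>0$ and $\dot{\hat\theta}^\polar(t)\ge -ct$ for small $t<0$. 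Consequently $\dot{\hat\theta}^\polar(t)\ne 0$ and $\hat\theta^\polar(t)<\theta^\polar_0$ for all $t\in(-\varepsilon,0)\cup(0,\varepsilon)$, provided $\varepsilon>0$ is chosen small enough that the trajectory remains in the $\delta$-neighborhood.

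Next I would invoke Theorem~\ref{thm:general_energy_integral} to conclude that $\mathbb{E}=\mathcal{U}(\theta^\polar_0)$ is conserved along $\hat\theta^\polar$. Applying Proposition~\ref{prop:unique_solution_if_dot_theta_ne_zero} at an auxiliary time $t_0\in(0,\varepsilon)$ (where $\dot{\hat\theta}^\polar(t_0)<0$) and then letting $t_0\to 0^+$, I would recover formula~\eqref{eq:integrating_energy_theta_polar_derivative_null_U_not_null} on $(0,\varepsilon)$; the analogous argument on $(-\varepsilon,0)$ produces the same formula there, and the two assemble into the stated identity with $|t|$ in place of $t$. Finally, because the right-hand side of that formula is a strictly monotone continuous function of $\hat\theta^\polar(t)$ (the integrand being strictly positive), $\hat\theta^\polar(t)$ is uniquely determined by $|t|$, which simultaneously delivers the uniqueness claim and the evenness $\hat\theta^\polar(t)=\hat\theta^\polar(-t)$.

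The main obstacle I anticipate is precisely the passage $t_0\to 0^+$, since the integrand in Proposition~\ref{prop:unique_solution_if_dot_theta_ne_zero} diverges at the endpoint $\theta^\polar_0$. The crucial point is that the one-sided Lipschitz bound $\mathcal{U}(\theta^\polar_0)-\mathcal{U}(\theta^\polar)\ge c|\theta^\polar-\theta^\polar_0|$ coming from Step~1 turns the potentially bad singularity into the integrable one $C|\theta^\polar-\theta^\polar_0|^{-1/2}$; this is the precise sense in which the hypothesis $0\notin\mathcal{U}'(\theta^\polar_0)$ is used.
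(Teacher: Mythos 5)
Your proof is correct and follows essentially the same route as the paper's: both exploit the sign-definiteness of $\mathcal{U}'$ in a neighborhood of $\theta^\polar_0$ to force $\dot{\hat\theta}^\polar(t)\ne 0$ for small $t\ne 0$, use conservation of $\mathbb{E}$, separate variables, and use the linear lower bound $\mathbb{E}-\mathcal{U}(\theta^\polar)\ge c\,|\theta^\polar-\theta^\polar_0|$ to make the endpoint singularity of $(\mathbb{E}-\mathcal{U})^{-1/2}$ integrable. The only (cosmetic) difference is the inversion step: the paper substitutes $\hat\theta^\polar=\theta^\polar_0+\varphi^2$ and shows the resulting function of $\varphi$ is bi-Lipschitz, whereas you argue via strict monotonicity of the integral in its upper limit after passing to the limit $t_0\to 0^+$ in Proposition~\ref{prop:unique_solution_if_dot_theta_ne_zero} --- both suffice.
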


\begin{proof}
	Since $\theta^\polar_1=0$, we have $\mathbb{E}=\mathcal{U}(\theta^\polar_0)$. Let $\mathcal{U}'(\theta^\polar_0)<0$ (the case $\mathcal{U}'(\theta^\polar_0)>0$ is similar). Then $\mathcal{U}'(\theta^\polar)\subset[-a;-b]$ in a neighborhood of $\theta^\polar_0$ for some $a>b>0$. Hence, using~\eqref{eq:general_ct_ode}, we obtain $\ddot{\hat\theta}^\polar(t)>0$, $\dot{\hat\theta}^\polar(t)>0$, and $\hat\theta^\polar(t)>\theta^\polar$ for small $t>0$. Thereby
	\[
		\sqrt{2}t = \int_{\theta^\polar_0}^{\hat\theta^\polar(t)}\frac{d\theta^\polar}{\sqrt{\mathbb{E} - \mathcal{U}(\theta^\polar)}}
		\quad\mbox{for}\quad t\ge 0.
	\]
	This integral has singularity at the left end, but it is finite, since $\mathbb{E}-\mathcal{U}(\theta^\polar)>b\theta^\polar$. Put $\hat\theta^\polar(t)=\theta^\polar_0+\varphi^2$, $\varphi\ge0$:
	\[
		\sqrt{2}t = \int_{\theta^\polar_0}^{\theta^\polar_0+\varphi^2}\frac{d\theta^\polar}{\sqrt{\mathbb{E} - \mathcal{U}(\theta^\polar)}}\stackrel{\mathrm{def}}{=} F(\varphi).
	\]
	The function $F$ is absolutely continuous, $F(0)=0$, and 
	\[
		\frac{2}{\sqrt{a}}\le F'(\varphi) = \frac{2\varphi}{\sqrt{\mathbb{E} - \mathcal{U}(\theta^\polar_0+\varphi^2)}} \le \frac{2}{\sqrt{b}}.
	\]
	Hence, $F$ is a strictly monotone increasing function, and there exists a unique solution $\varphi=\varphi(t)\ge 0$ to the equation $F(\varphi)=\sqrt{2}t$ for small $t\ge 0$. Moreover, $\varphi(t)$ is a bi-Lipschitz continuous function, and $\hat\theta^\polar(t)=\theta^\polar_0+\varphi^2(t)$.
	
	For $t\le 0$, we have
	\[
		-\sqrt{2}t = \int_{\theta^\polar_0}^{\hat\theta^\polar(t)}\frac{d\theta^\polar}{\sqrt{\mathbb{E} - \mathcal{U}(\theta^\polar)}}\mbox{ for }t\ge 0.
	\]
	Hence $\hat\theta^\polar(-t)=\hat\theta^\polar(t)$.
\end{proof}

The condition $0\not\in\mathcal{U}'(\theta^\polar_0)$ is equivalent to the following one: the vector $df(\cos_{\Omega^\polar}\theta^\polar_0,\sin_{\Omega^\polar}\theta^\polar_0)$ is not null and defines a non-supporting line to $\Omega^\polar$ at $(\cos_{\Omega^\polar}\theta^\polar_0,\sin_{\Omega^\polar}\theta^\polar_0)$.

\subsection{Absence of uniqueness}
\label{subsec:nonuniqueness_general_ct_ode}

So uniqueness may fail only if $\theta^\polar_1=0$ and $0\in\mathcal{U}'(\theta^\polar_0)$. Let us now investigate this situation. Obviously, if $\mathcal{U}(\theta^\polar)\ge\mathcal{U}(\theta^\polar_0)$ in a neighborhood of $\theta^\polar_0$, then a unique solution with $\theta^\polar_1=0$ is $\hat\theta^\polar(t)\equiv\theta^\polar_0$. Indeed, $(\dot{\hat\theta}^\polar(t))^2\le 0$ by Theorem~\ref{thm:general_energy_integral}. It remains to describe solutions for the cases, when $\mathcal{U}(\theta^\polar)<\mathcal{U}(\theta^\polar_0)$ in a left or in a right neighborhood\footnote{We omit here some rare cases, when $\mathcal{U}(\theta^\polar)\le 0$ in right (or left) neighborhood of $\theta^\polar_0$, but inequality $\mathcal{U}(\theta^\polar)<0$ does not hold in any left (of right) neighborhood of $\theta^\polar_0$.} of $\theta^\polar_0$.

The following theorem describes behaviour of solutions  only for $t\ge 0$. The case $t\le 0$ is similar, since if $\hat\theta^\polar(t)$ is a solution to~\eqref{eq:general_ct_ode}, \eqref{eq:general_ct_ode_initial_data} and $\theta^\polar_1=0$, then $\hat\theta^\polar(-t)$ is a solution to~\eqref{eq:general_ct_ode}, \eqref{eq:general_ct_ode_initial_data} as well.

\begin{thm}
\label{thm:absense_of_uniqueness_for_general_ct_ode}
	Let $\theta^\polar_0\in\R$, $\theta^\polar_1=0$, and $0\in\mathcal{U}'(\theta^\polar_0)$. Put $\mathbb{E}=\mathcal{U}(\theta^\polar_0)$. Suppose that $\mathcal{U}(\theta^\polar)<\mathbb{E}$ either $(r)$ in a right, or $(l)$ in a left, or $(b)$ in both left and right neighborhoods of $\theta^\polar_0$. Then there exists $\tau>0$ with the following properties.
	
	\begin{itemize}
		\item[$(1)$] Let $\hat\theta^\polar(t)$ be a solution to~\eqref{eq:general_ct_ode}, \eqref{eq:general_ct_ode_initial_data} for $t\ge 0$. Put
		\[
			t_+=\sup \{t\ge 0 : \hat\theta^\polar(s)=\theta^\polar_0\mbox{ for all }s\in[0;t]\}.
		\]
			If there exists a solution $\hat\theta^\polar$ with $t_+<\infty$, then $\dot{\hat\theta}^\polar(t)$ for $t\in(t_+;t_++\tau)$ is not null\footnote{Hence, it must be positive in the case $(r)$ and negative in the case $(l)$.}, and $(\mathbb{E} - \mathcal{U}(\theta^\polar))^{-1/2}\in L_1(\theta^\polar_0,\theta^\polar_0+\varepsilon)$ for some $\varepsilon\ne 0$, $\mathrm{sgn}\,\varepsilon=\mathrm{sgn}\,\dot{\hat\theta}^\polar(t_++0)$. Moreover, for any solution $\hat\theta^\polar$ with $t_+<\infty$, we have
		\begin{equation}
		\label{eq:integrating_energy_theta_polar_derivative_null_U_null}
			\sqrt{2}(t-t_+)\mathrm{sgn}\,\dot{\hat\theta}^\polar(t_++0) = \int_{\theta^\polar_0}^{\hat\theta^\polar(t)}\frac{d\theta^\polar}{\sqrt{\mathbb{E} - \mathcal{U}(\theta^\polar)}}\quad\mbox{for}\quad t\in(t_+;t_++\tau].
		\end{equation}

		\item[$(2)$] If $(r)$ (or $(b)$) and $(\mathbb{E} - \mathcal{U}(\theta^\polar))^{-1/2}\in L_1(\theta^\polar_0,\theta^\polar_0+\varepsilon)$ for some $\varepsilon>0$, then for any $t_+\ge 0$ there exists a solution $\hat\theta^\polar(t)$ to~\eqref{eq:general_ct_ode}, \eqref{eq:general_ct_ode_initial_data} for $t\ge 0$ such that $\hat\theta^\polar(t)=\theta^\polar_0$ for $t\in[0;t_+]$, $\dot{\hat\theta}^\polar(t)$ is positive for $t\in(t_+;t_++\tau)$, and~\eqref{eq:integrating_energy_theta_polar_derivative_null_U_null} holds. The remaining case  ($(l)$ or $(b)$) is similar with $\varepsilon<0$.
	\end{itemize}
\end{thm}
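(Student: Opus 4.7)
The backbone is the energy first integral from Theorem~\ref{thm:general_energy_integral}, which for any solution gives $\frac12(\dot{\hat\theta}^\polar(t))^2 + \mathcal{U}(\hat\theta^\polar(t)) \equiv \mathbb{E}$. Since $\hat\theta^\polar \equiv \theta^\polar_0$ on $[0, t_+]$, this constant equals $\mathcal{U}(\theta^\polar_0)$, and continuity of $\dot{\hat\theta}^\polar$ forces $\dot{\hat\theta}^\polar(t_+) = 0$. The plan is to reduce the inclusion \eqref{eq:general_ct_ode} to the scalar relation $\dot{\hat\theta}^\polar = \pm\sqrt{2(\mathbb{E}-\mathcal{U}(\hat\theta^\polar))}$ in any regime where $\mathcal{U}(\hat\theta^\polar) < \mathbb{E}$, i.e., where $\hat\theta^\polar$ sits on a strict side.

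For part~(1), after shifting time so $t_+=0$, I would pick $\delta>0$ small enough that the strict inequality $\mathcal{U}<\mathbb{E}$ holds on the appropriate one-sided $\delta$-interval for the case $(r)$, $(l)$, or $(b)$, and set $\tau:=\delta/M$ with $M$ a universal bound on $|\dot{\hat\theta}^\polar|$ obtained from energy conservation. Every solution then remains in the $\delta$-neighborhood of $\theta^\polar_0$ on $(t_+, t_++\tau)$. The decisive point is that on the strict side, $\dot{\hat\theta}^\polar$ is continuous and energy conservation forbids it to vanish anywhere $\hat\theta^\polar\neq\theta^\polar_0$. Hence a hypothetical excursion to the strict side that returned to $\theta^\polar_0$ and left again would contain an interior local extremum with $\dot{\hat\theta}^\polar=0$ but $\mathcal{U}<\mathbb{E}$---a contradiction. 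So after $t_+$ the solution stays on one side with constant sign of $\dot{\hat\theta}^\polar$; separating variables $\dot{\hat\theta}^\polar/\sqrt{2(\mathbb{E}-\mathcal{U}(\hat\theta^\polar))} = \mathrm{sgn}\,\dot{\hat\theta}^\polar(t_++0)$ and substituting $\theta^\polar=\hat\theta^\polar(s)$ gives~\eqref{eq:integrating_energy_theta_polar_derivative_null_U_null}, while the finiteness of $t-t_+$ forces the required $L_1$-integrability near $\theta^\polar_0$.

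For part~(2), given the integrability, introduce $F(u):=\int_{\theta^\polar_0}^{u}(\mathbb{E}-\mathcal{U}(\theta^\polar))^{-1/2}\,d\theta^\polar$ on $[\theta^\polar_0,\theta^\polar_0+\varepsilon]$, which is continuous, strictly increasing, and vanishes at $\theta^\polar_0$, so it admits a continuous inverse $F^{-1}$. For any fixed $t_+\ge 0$ define $\hat\theta^\polar(t):=\theta^\polar_0$ on $[0,t_+]$ and $\hat\theta^\polar(t):=F^{-1}(\sqrt{2}(t-t_+))$ on $(t_+,t_++\tau]$, with $\tau$ chosen small enough to realize part~(1) and to keep the argument of $F^{-1}$ in range. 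By the inverse function theorem, $\dot{\hat\theta}^\polar(t)=\sqrt{2(\mathbb{E}-\mathcal{U}(\hat\theta^\polar(t)))}$ on the nontrivial piece, positive on $(t_+,t_++\tau)$ and tending to $0$ as $t\to t_+^+$ because the integrand of $F$ blows up at $\theta^\polar_0$, making $(F^{-1})'(0^+)=0$. The two pieces glue into a $C^1$ function with $\hat\theta^\polar(t_+)=\theta^\polar_0$ and $\dot{\hat\theta}^\polar(t_+)=0$. Energy is constant along $\hat\theta^\polar$ by construction, so the converse half of Theorem~\ref{thm:general_energy_integral} certifies that $\hat\theta^\polar$ solves~\eqref{eq:general_ct_ode} on $(t_+,t_++\tau)$; on $[0,t_+]$ the function is constant and trivially satisfies the inclusion because $0\in\mathcal{U}'(\theta^\polar_0)$.

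The main obstacle is the no-return step in part~(1) in the one-sided cases $(r)$ and $(l)$: the bump contradiction only works on the strict side, so I must also rule out that the solution drifts off toward the opposite side, where $\mathcal{U}$ could touch $\mathbb{E}$ at points accumulating at $\theta^\polar_0$. This is precisely the scenario set aside by the theorem's footnote; under the stated hypotheses, any motion toward the non-strict side is incompatible either with energy conservation (if $\mathcal{U}>\mathbb{E}$ there) or with the conclusion ``$\dot{\hat\theta}^\polar$ is not null'' (if $\mathcal{U}=\mathbb{E}$ there), which is what pins the sign of $\dot{\hat\theta}^\polar$ on $(t_+,t_++\tau)$ to be positive in case $(r)$ and negative in case $(l)$.
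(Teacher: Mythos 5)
Your argument follows the paper's proof essentially step for step: the energy integral from Theorem~\ref{thm:general_energy_integral} bounds the speed by a constant $M$, the time $\tau$ is chosen as (distance)/$M$ so that the solution cannot leave the one-sided interval on which $\mathcal{U}<\mathbb{E}$ is strict, a return to $\theta^\polar_0$ is excluded because an interior extremum would force $\dot{\hat\theta}^\polar=0$ at a point where $\mathcal{U}<\mathbb{E}$, separation of variables yields \eqref{eq:integrating_energy_theta_polar_derivative_null_U_null} and the $L_1$ condition, and existence in part $(2)$ is obtained by inverting the very same integral (your $F$ is the paper's $G$) and gluing with the constant piece. The one blemish is your closing remark that motion toward the non-strict side is ``incompatible with the conclusion'' --- as phrased that is circular; the correct disposal (which is also, implicitly, the paper's) is that energy conservation forbids the solution from reaching any point with $\mathcal{U}>\mathbb{E}$, and the borderline situations where $\mathcal{U}\le\mathbb{E}$ but not $\mathcal{U}<\mathbb{E}$ on that side are excluded by the standing footnote at the start of Subsection~\ref{subsec:nonuniqueness_general_ct_ode}.
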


The geometric meaning of the case $0\in\mathcal{U}'(\theta^\polar_0)$ is the following. The constant $\hat\theta^\polar(t)\equiv\theta^\polar_0$ is always a solution. But if, for example, $\mathcal{U}(\theta^\polar)<\mathcal{U}(\theta^\polar_0)$ in a right neighborhood of $\theta^\polar_0$ and the following improper integral
\[
	\int_{\theta^\polar_0}^{\theta^\polar_0+\varepsilon}\frac{d\theta^\polar}{\sqrt{\mathbb{E} - \mathcal{U}(\theta^\polar)}}
\]
is finite for some $\varepsilon>0$, then uniqueness fails. A solution stays at $\theta^\polar_0$ for an arbitrary time $t_+\ge 0$ and then goes to the right for at least $\tau>0$ time. So, there is a 1-parameter family (determined by the parameter $t_+\ge 0$) of solutions for $t\ge 0$ defined on $[0;t_++\tau]$. Each solution can then be  extended for $t\in[0;+\infty)$ by Filippov's theorem. If $\mathcal{U}(\theta^\polar)<\mathcal{U}(\theta^\polar_0)$ in a left neighborhood of $\theta^\polar_0$ and the improper integral is finite for some $\varepsilon<0$, then a solution can go also to the left (and there is another 1-parameter family of solutions for $t\in[0;t_++\tau]$, $t_+\ge 0$). A similar situation happens for $t\le 0$.

\begin{proof}[Proof of Theorem~$\ref{thm:absense_of_uniqueness_for_general_ct_ode}$]
	Denote $\Theta=\{\theta^\polar\in\R:\mathcal{U}(\theta^\polar)=\mathbb{E}\}$. Obviously, if $\dot{\hat\theta}^\polar(t)=0$, then $\hat\theta^\polar(t)\in\Theta$ by Theorem~\ref{thm:general_energy_integral}. Put
	\[
		\theta^\polar_r=\inf(\Theta\cap\{\theta^\polar>\theta^\polar_0\});
		\qquad
		\theta^\polar_l=\sup(\Theta\cap\{\theta^\polar<\theta^\polar_0\}).
	\]
	If $(r)$ or $(b)$, then $\theta^\polar_r>\theta^\polar_0$, and if $(l)$ or $(b)$, then $\theta^\polar_l<\theta^\polar_0$ by the hypothesis of the  theorem.

	Since $|\dot{\hat\theta}^\polar(t)|=\sqrt{2}(\mathbb{E}-\mathcal{U}(\hat\theta^\polar(t)))^{1/2}$ for any solution $\hat\theta^\polar$ and $\mathcal{U}$ is a bounded function, the derivative $\dot{\hat\theta}^\polar(t)$ is bounded by the constant $M=\sqrt{2}(\mathbb{E}-\min_{\theta^\polar}\mathcal{U}(\theta^\polar))^{1/2}$, i.e.,\ $|\dot{\hat\theta}^\polar(t)|\le M$ for all $t$. Denote
	\[
		\tau_l=(\theta^\polar_0-\theta^\polar_l)/M,\qquad \tau_r=(\theta^\polar_r-\theta^\polar_0)/M,
	\]
	and put $(l)$  $\tau=\tau_l$, $(r)$  $\tau=\tau_r$, or $(b)$  $\tau=\min\{\tau_l,\tau_r\}$. Obviously $\tau>0$.
	
	The proof of item 1 is based on the following key property of $\tau$: if $\hat\theta^\polar(t_1)=\hat\theta^\polar(t_2)=\theta^\polar_0$ and $|t_2-t_1|<\tau$, then $\hat\theta^\polar(t)=\theta^\polar_0$ for all $t$ between $t_1$ and $t_2$. Let us prove this property by contradiction. Let $t_1<t_2$. Suppose that there exists $s\in(t_1,t_2)$ such that $\hat\theta^\polar(s)\ne \theta^\polar_0$. Let for example $\hat\theta^\polar(s)>\theta^\polar_0$ (this is possible only if $(r)$ or $(b)$ is fulfilled). Put $T=\argmax_{t\in[t_1;t_2]}\hat\theta^\polar(t)$. Then $\hat\theta^\polar(T)>\theta^\polar_0$, $\dot{\hat\theta}^\polar(T)=0$, and $\hat\theta^\polar(T)\in\Theta$. Therefore, $\hat\theta^\polar(T)\ge \theta^\polar_r$ and we have the following contradiction:
	\[
		\theta^\polar_r-\theta^\polar_0 \le \hat\theta^\polar(T)-\hat\theta^\polar(t_1)\le M(T-t_1) < M\tau\le \theta^\polar_r-\theta^\polar_0.
	\]

	Now we are ready to prove item 1 of the theorem. Suppose that there exists a solution~$\hat\theta^\polar$ with $t_+<\infty$. By the definition of $t_+$, there exists $s\in(t_+;t_++\tau)$ such that $\hat\theta^\polar(s)\ne \theta^\polar_0$. We claim that if $\hat\theta^\polar(s)>\theta^\polar_0$, then $\dot{\hat\theta}^\polar(t)>0$ for all $t\in(t_+;t_++\tau)$. Let us prove this claim again by contradiction. Since $\hat\theta^\polar(s)>\theta^\polar_0$, $(r)$ or $(b)$ is fulfilled. Moreover, if the claim does not hold, then there exists $T\in(t_+;t_++\tau)$ such that $\dot{\hat\theta}^\polar(T)=0$. Therefore, $\hat\theta^\polar(T)\in\Theta$. Since $T-t_+<\tau$, we have $\hat\theta^\polar(T)=\theta^\polar_0$. Hence, $\hat\theta^\polar(t)=\theta^\polar_0$ for $t\in[t_+;T]$ by the key property of $\tau$, and we obtain a contradiction with the definition of~$t_+$. The case $\hat\theta^\polar(s)<\theta^\polar_0$ is similar. 
	
	So, $\dot{\hat\theta}^\polar(t)\ne 0$ and it does not change sign for all $t\in(t_+;t_++\tau)$. Let $\dot{\hat\theta}^\polar(t)>0$ for all $t\in(t_+;t_++\tau)$ (the opposite case is similar). Using Theorem~\ref{thm:general_energy_integral}, we obtain $\dot{\hat\theta}^\polar(t)=\sqrt{2}(\mathbb{E}-\mathcal{U}(\hat\theta^\polar(t)))^{1/2}>0$ for $t\in(t_+;t_++\tau)$. Hence 
	\[
		\frac{\dot{\hat\theta}^\polar(t)}{\sqrt{\mathbb{E}-\mathcal{U}(\hat\theta^\polar(t))}}=\sqrt{2}
		\qquad\mbox{for}\qquad t\in(t_+;t_++\tau)
	\]
	and
	\[
		\int_{t_+}^t \frac{\dot{\hat\theta}^\polar(s)\,ds}{\sqrt{\mathbb{E}-\mathcal{U}(\hat\theta^\polar(s))}} = \sqrt{2}(t-t_+)
		\qquad\mbox{for}\qquad t\in(t_+;t_++\tau].
	\]
	The change of variables $d\theta^\polar=\dot{\hat\theta}^\polar(s)\,ds$, $\theta^\polar=\hat\theta^\polar(s)$ is admissible, since the function $\hat\theta^\polar\in W^2_\infty$ is strictly monotone increasing for $t\in[t_+;t_++\tau]$. This change proves both $(\mathbb{E}-\mathcal{U}(\theta^\polar))^{-1/2}\in L_1(\theta^\polar_0,\theta^\polar_0+\varepsilon)$ for some $\varepsilon>0$ and~\eqref{eq:integrating_energy_theta_polar_derivative_null_U_null} for any solution $\hat\theta^\polar$ with~$t_+<\infty$.

	Let us now prove item 2 of the theorem. Suppose that $(\mathbb{E}-\mathcal{U}(\theta^\polar))^{-1/2}\in L_1(\theta^\polar_0,\theta^\polar_0+\varepsilon)$ for some $\varepsilon>0$ (the case $\varepsilon<0$ is similar). Then $(r)$ or $(b)$ is fulfilled, and the function
	\[
		G(\theta^\polar) \stackrel{\mathrm{def}}{=} \int_{\theta^\polar_0}^{\theta^\polar} \frac{d\psi}{\sqrt{\mathbb{E}-\mathcal{U}(\psi)}}
	\]
	is strictly increasing for $\theta^\polar\in[\theta^\polar_0;\theta^\polar_r]$. Moreover, $G(\theta^\polar_0)=0$ and $G(\theta^\polar_r)\ge\sqrt{2}\tau$, since $\mathbb{E}-\mathcal{U}(\theta^\polar)\le\frac12M^2$. Hence for any $t\in[0;\tau]$, there exists $\tilde\theta^\polar(t)$ such that $G(\tilde\theta^\polar(t))=\sqrt{2}t$. Moreover, since $G\in C^1(\theta^\polar_0,\theta^\polar_r)$ and $G'(\theta^\polar)\ne0$ for $\theta^\polar\in(\theta^\polar_0,\theta^\polar_r)$, we obtain $\tilde\theta^\polar\in C^1(0;\tau)$ and $\dot{\tilde\theta}^\polar=\sqrt{2}(\mathbb{E}-\mathcal{U}(\tilde\theta^\polar))^{1/2}>0$  by the inverse function theorem. Hence, $\tilde\theta^\polar$ is a solution to~\eqref{eq:general_ct_ode} by Theorem~\ref{thm:general_energy_integral}.	
	
	Since $\tilde\theta^\polar(0)=\theta^\polar_0$ and $\dot{\tilde\theta}^\polar(0)=0$, it remains to put
	\[
		\hat\theta^\polar(t)=
		\left\{\begin{array}{lll}
			\theta^\polar_0&\mbox{for}&t\in[0;t_+];\\
			\tilde\theta(t-t_+)&\mbox{for}&t\in(t_+;t_++\tau].
		\end{array}\right.
	\]
	Indeed, the function $\hat\theta$ is $C^1$ and its second derivative is $L_\infty$ and satisfies~\eqref{eq:general_ct_ode} for a.e.~$t$.
\end{proof}

So, uniqueness may fail only if a solution has zero speed $\theta^\polar_1=0$ at a point $\theta^\polar_0$ and $0\in\mathcal{U}'(\theta^\polar_0)$. Moreover, if $\partial\Omega^\polar$ is $W^2_\infty$ in a neighborhood of the point $(\cos_{\Omega^\polar}\theta^\polar_0,\sin_{\Omega^\polar}\theta^\polar_0)$ then uniqueness persists, since improper integral~\eqref{eq:integrating_energy_theta_polar_derivative_null_U_null} is not finite (or by Proposition~\ref{prop:uniqueness_of_solution_when_Omega_smooth}). Let us develop some easy to check condition that guarantees absence of uniqueness.

\begin{prop}
\label{prop:corner_guarantees_non_uniqueness}
	If $\mathcal{U}'(\theta^\polar_0+0)<0$, then $(\mathbb{E}-\mathcal{U}(\theta^\polar))^{-1/2}\in L_1(\theta^\polar_0,\theta^\polar_0+\varepsilon)$ for $\mathbb{E}=\mathcal{U}(\theta^\polar_0)$ and some $\varepsilon>0$. A similar result holds for the case $\mathcal{U}'(\theta^\polar_0-0)>0$.
\end{prop}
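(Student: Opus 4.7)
The strategy is to translate the hypothesis $\mathcal{U}'(\theta^\polar_0+0)<0$ into a quantitative one-sided linear lower bound on $\mathbb{E}-\mathcal{U}(\theta^\polar)$, and then observe that $\int (\theta^\polar-\theta^\polar_0)^{-1/2}\,d\theta^\polar$ converges near $\theta^\polar_0$.

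First I would fix notation. Since $\mathcal{U}(\theta^\polar) = f(\cos_{\Omega^\polar}\theta^\polar,\sin_{\Omega^\polar}\theta^\polar)$ and $\cos_{\Omega^\polar}$, $\sin_{\Omega^\polar}$ are Lipschitz continuous with well-defined left and right derivatives everywhere (by property II of Subsection~\ref{subsec:CT_properties}), the function $\mathcal{U}$ inherits well-defined left and right derivatives at every point; the quantity $\mathcal{U}'(\theta^\polar_0+0)$ denotes its right derivative at $\theta^\polar_0$. Set $-2c := \mathcal{U}'(\theta^\polar_0+0)<0$, so $c>0$.

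Next I would use the definition of the right derivative. By the existence of the limit
\[
\lim_{\theta^\polar\to\theta^\polar_0+}\frac{\mathcal{U}(\theta^\polar)-\mathcal{U}(\theta^\polar_0)}{\theta^\polar-\theta^\polar_0}=-2c,
\]
one can pick $\varepsilon>0$ such that for every $\theta^\polar\in(\theta^\polar_0,\theta^\polar_0+\varepsilon)$ this difference quotient is less than $-c$. Multiplying by $\theta^\polar-\theta^\polar_0>0$ and recalling that $\mathbb{E}=\mathcal{U}(\theta^\polar_0)$, I obtain the key inequality
\[
\mathbb{E}-\mathcal{U}(\theta^\polar)=\mathcal{U}(\theta^\polar_0)-\mathcal{U}(\theta^\polar)>c(\theta^\polar-\theta^\polar_0),
\qquad \theta^\polar\in(\theta^\polar_0,\theta^\polar_0+\varepsilon).
\]

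Finally I would deduce the integrability claim. The inequality above gives
\[
\bigl(\mathbb{E}-\mathcal{U}(\theta^\polar)\bigr)^{-1/2}<c^{-1/2}(\theta^\polar-\theta^\polar_0)^{-1/2},
\]
and since the right-hand side is integrable on $(\theta^\polar_0,\theta^\polar_0+\varepsilon)$, the left-hand side belongs to $L_1(\theta^\polar_0,\theta^\polar_0+\varepsilon)$. The symmetric case $\mathcal{U}'(\theta^\polar_0-0)>0$ follows by the same argument applied to $\psi^\polar=-\theta^\polar$, producing an $\varepsilon<0$. There is no real obstacle beyond unpacking what ``right derivative'' means for $\mathcal{U}$ built from convex-trigonometric functions; once the linear decrease bound is in place, the $\tfrac12$-power singularity is integrable by elementary calculus.
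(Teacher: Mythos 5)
Your proof is correct and follows essentially the same route as the paper: the paper's one-line argument is precisely the linear bound $\mathcal{U}(\theta^\polar)\le \mathbb{E}-\tfrac12\delta(\theta^\polar-\theta^\polar_0)$ in a right neighborhood of $\theta^\polar_0$, from which integrability of the inverse square root is immediate. Your version merely spells out how that bound follows from the definition of the one-sided derivative, which is a harmless elaboration.
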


\begin{proof}
	Indeed, in this case, $\mathcal{U}(\theta^\polar)\le \mathbb{E} - \frac12\delta(\theta^\polar-\theta^\polar_0)$ for $\theta^\polar$ in a right neighborhood of~$\theta^\polar_0$.
\end{proof}

Let us give a geometric explanation of the pair of conditions $0\in\mathcal{U}'(\theta^\polar_0)$ and $\mathcal{U}'(\theta^\polar_0+0)<0$ (which together guarantee absence of uniqueness for solutions with $\theta^\polar_1=0$). Denote $Q=(\cos_{\Omega^\polar}\theta^\polar,\sin_{\Omega^\polar}\theta^\polar)\in\partial\Omega^\polar$. Obviously, if $df(Q)=0$, then $\mathcal{U}'(\theta^\polar_0)=\{0\}$. So we assume that $df(Q)\ne 0$. In this case, $df(Q)$ defines an (affine) hyperplane $\Pi=\{(p,q)\in\R^{2*}: \langle(p,q),df(Q)\rangle \le \langle Q,df(Q)\rangle\}$. The line $\partial\Pi$ is tangent to $\partial\Omega^\polar$ at $Q$ iff $0\in \mathcal{U}'(\theta^\polar_0)$. The set of all $\theta\leftrightarrow\theta^\polar_0$ has the form $[\theta_-;\theta_+] + 2\mathbb{S}\Z$, and the counterclockwise tangent ray at $Q$ to $\partial\Omega^\polar$ is defined by $\theta_+$: this ray is co-directed with $(-\sin_\Omega\theta_+,\cos_\Omega\theta_+)$. In these notations, $\mathcal{U}'(\theta^\polar_0+0)=\langle df(Q), (-\sin_\Omega\theta_+,\cos_\Omega\theta_+)\rangle <0$. Therefore, the ray belongs to the interior of the supporting hyperplane $\Pi$ at $Q$ to $\Omega^\polar$, defined by $df(Q)\ne0$.

\subsection{Admissible controls}
\label{subsec:theta_in_general_ct_ode}

In what follows, solutions to~\eqref{eq:general_ct_ode} often determine a lift of extremals into the cotangent bundle, and $\hat\theta(t)\leftrightarrow\hat\theta^\polar(t)$ determines control on extremals. So the last thing we want to develop in this section is to find all functions $\hat\theta(t)$ such that, for a.e. $t$, $\hat\theta(t)\leftrightarrow\hat\theta^\polar(t)$ and 
\[
	\ddot{\hat\theta}^\polar = df_p\sin_\Omega\hat\theta(t) - df_q\cos_\Omega\hat\theta(t)
\]
(measurable functions $\hat\theta(t)$ satisfying these two properties for a.e.\ $t$ will be called admissible).

First, suppose that $\theta^\polar_1\ne 0$. In this case, there exists a unique solution $\hat\theta^\polar(t)$ to~\eqref{eq:general_ct_ode}, \eqref{eq:general_ct_ode_initial_data} for $t$ in a neighborhood of $t=0$ by Proposition~\ref{prop:unique_solution_if_dot_theta_ne_zero}. Then $\dot{\hat\theta}^\polar(t)\ne 0$ in a neighborhood of $t=0$. Since for a.e.\ $\theta^\polar$ there exists a unique $\theta\leftrightarrow\theta^\polar$, we obtain that there exists a unique $\hat\theta(t)\leftrightarrow\hat\theta^\polar(t)$ for a.e.~$t$ in a neighborhood of~$t=0$. Hence there exists a unique (up to a set of zero measure) admissible $\hat\theta(t)$.

Now suppose that $\theta^\polar_1=0$. In this case, we want to examine, whether $\hat\theta^\polar(t)\equiv\theta^\polar_0$ is a solution, and if it is, we want to find an admissible $\hat\theta(t)$. Obviously,
\[
 	-df_p(Q_0)\sin_\Omega\hat\theta(t)+df_q(Q_0)\cos_\Omega\hat\theta(t)=-\ddot{\hat\theta}^\polar(t)=0,
\]
where we denote $Q_0=(\cos_{\Omega^\polar}\theta^\polar_0,\sin_{\Omega^\polar}\theta^\polar_0)$ for short. Since $\hat\theta(t)\leftrightarrow\theta^\polar_0$,
\[
	\cos_{\Omega^\polar}\theta^\polar_0\cos_\Omega\hat\theta(t) + \sin_{\Omega^\polar}\theta^\polar_0\sin_\Omega\hat\theta(t) = 1.
\]

Hence, $x=\cos_\Omega\hat\theta(t)$ and $y=\sin_\Omega\hat\theta(t)$ satisfy the following system of two linear equations:
\[
	\left\{
		\begin{array}{l}
			x \cos_{\Omega^\polar}\theta^\polar + y \sin_{\Omega^\polar}\theta^\polar = 1;\\
			x f_q (Q(t)) - y f_p(Q(t)) = 0;
		\end{array}
	\right.
	\quad\Leftrightarrow\quad
	\begin{pmatrix}
		\cos_{\Omega^\polar}\theta^\polar & \sin_{\Omega^\polar}\theta^\polar\\
		f_q (Q(t)) & -f_p(Q(t))
	\end{pmatrix}
	\begin{pmatrix}
		x\\y
	\end{pmatrix}
	=
	\begin{pmatrix}
		1\\0
	\end{pmatrix}.
\]
\begin{enumerate}
	\item If $df(Q_0)\not\perp Q_0$, then the system has a unique solution $(x_0,y_0)$, since the matrix in the left-hand side has non-zero determinant. Moreover, the point $(x_0,y_0)$ belongs to the supporting line to $\Omega$ determined by the first equation of the system. So, if $df(Q_0)\not\perp Q_0$, then there are two possibilities. 
	\begin{enumerate}
		\item\label{ct_ode_general_singular} $(x_0,y_0)\in\partial\Omega$ iff $0\in\mathcal{U}'(\theta^\polar_0)$. In this case, $\hat\theta^\polar(t)\equiv\theta^\polar_0$ is a solution to~\eqref{eq:general_ct_ode}. Moreover, there exists a unique (up to a set of zero measure) admissible $\hat\theta(t)\equiv\theta_0$ where $x_0=\cos_\Omega\theta_0$ and $y_0=\sin_\Omega\theta_0$. This case usually corresponds to singular extremals.
	
		\item $(x_0,y_0)\not\in\partial\Omega$ iff $0\not\in\mathcal{U}'(\theta^\polar_0)$. In this case, $\hat\theta^\polar(t)\equiv\theta^\polar_0$ is not a solution to~\eqref{eq:general_ct_ode}. By Proposition~\ref{prop:uniqueness_if_zero_not_in_U_derivative} there exists a unique solution~$\hat\theta^\polar(t)$ to~\eqref{eq:general_ct_ode}, \eqref{eq:general_ct_ode_initial_data}, and $\dot{\hat\theta}^\polar(t)\ne 0$ in a punctured neighborhood of $t=0$. Hence the admissible function $\hat\theta(t)$ is unique (again up to a zero measure) in this neighborhood.
	\end{enumerate}

	\item If $df(Q_0)\perp Q_0$, then $\mathcal{U}'(\theta^\polar_0)$ is a singleton. Indeed, $(f_p(Q_0),f_q(Q_0))=\lambda (-\sin_{\Omega^\polar}\theta^\polar_0,\cos_{\Omega^\polar}\theta^\polar_0)$ for some $\lambda\in\R$ and $\mathcal{U}'(Q_0)=\{-f_p(Q_0)\sin_\Omega\theta + f_q(Q_0)\cos_\Omega\theta\text{ for all }\theta\leftrightarrow\theta^\polar_0\}$. Hence by the generalized Pythagorean identity (see Subsec.~\ref{subsec:CT_properties}), we have $\mathcal{U}'(\theta^\polar_0)=\{\lambda\}$. There are two possibilities.

	\begin{enumerate}
		\item If $df(Q_0) \ne 0$, then $\mathcal{U}'(\theta^\polar_0)\ne 0$ and $\hat\theta^\polar(t)\equiv\theta^\polar_0$ is not a solution to~\eqref{eq:general_ct_ode}.
	
		\item\label{ct_ode_special_singular} If $df(Q_0) = 0$, then $\mathcal{U}'(\theta^\polar_0)=0$ and $\hat\theta^\polar(t)\equiv\theta^\polar_0$ is a solution to~\eqref{eq:general_ct_ode}. In this case, any measurable function $\hat\theta(t)$ satisfying $\hat\theta(t)\leftrightarrow\theta^\polar_0$ is admissible. If $\Omega^\polar$ has a corner at $Q_0$, then $\hat\theta(t)$ is not unique and vice versa. This case also corresponds to singular extremals.
	\end{enumerate}
\end{enumerate}

\begin{remark}
\label{remark:two_different_singular_controls}
	To distinguish cases~(\ref{ct_ode_general_singular}) and~(\ref{ct_ode_special_singular}) we use the following terms. In case (\ref{ct_ode_general_singular}), we say that $\hat\theta(t)$ is \textit{general singular}; and in case (\ref{ct_ode_special_singular}), we say that $\hat\theta(t)$ is \textit{special singular}.
\end{remark}

\section[Left-invariant  sub-Finsler problems on 3D unimodular Lie groups]{Left-invariant  sub-Finsler problems \\on 3D unimodular Lie groups}\label{sec:3DLie}
\subsection{Problem statement and normalization}
Let $G$ be a 3-dimensional unimodular Lie group, and $L = T_{\Id}G$ its Lie algebra, where $\Id$ is the identity element of $G$. Thus the following cases are possible~\cite{jacobson}:
\begin{itemize} 
\item
$L = \h_3$ is the Heisenberg algebra,
\item
$L = \su_2$ is the Lie algebra of the group of $2 \times 2$ unitary matrices,
\item
$L = \sl_2$ is the Lie algebra of the group of $2\times 2$ real unimodular matrices,
\item
$L = \se_2$ ($L = \sh_2$) is the Lie algebra of the group of Euclidean (resp. hyperbolic) motions of the 2-dimensional plane.
\end{itemize} 

Let $\Delta \subset L$ be a 2-dimensional subspace that is not a subalgebra, and let $U \subset \Delta$ be a convex compact set such that $0 \in \intt_{\Delta}U$. We consider the following left-invariant time-optimal problem:
\begin{align}
&\dot q \in q U, \qquad q \in G,  \label{pr1}\\
&q(0) = q_0, \quad q(T) = q_1, \label{pr2}\\
&T \to \min. \label{pr3}
\end{align}

\begin{remark}
If $U$ is an ellipse centered at the origin, then problem~\eq{pr1}--\eq{pr3} is sub-Riemannian. Some cases of this problem were studied in papers~\cite{boscain_rossi, berestovsky, max_sre, cut_sre1, cut_sre2, sh21, sh22, sh23, sr_so3}.
\end{remark}

\begin{remark}
Problem~\eq{pr1}--\eq{pr3} defines a distance function on the group in a classical way.
If $U = - U$, then 
the distance is symmetric. In this case 
problem~\eq{pr1}--\eq{pr3} is 
usually called
sub-Finsler
in literature. If $U \neq - U$ then the distance is not symmetric, but it defines a regular Hausdorff topology on the group. So we will call such a problem as well sub-Finsler for short.
\end{remark}

Since problem~\eq{pr1}--\eq{pr3} is left-invariant, we can assume that $q_0 = \Id$.

By the theorem of Agrachev-Barilari~\cite{agr_bar} on classification of contact left-invariant sub-Riemannian structures on 3D unimodular Lie groups, there exists a basis $L = \spann(X_1, X_2, X_3)$ such that
\begin{align}
&\Delta = \spann(X_1, X_2), \nonumber\\
&[X_1, X_2] = X_3, \quad [X_3, X_1] = (\chi + \kappa)X_2, \quad [X_3, X_2] = (\chi - \kappa) X_1, \label{product1}\\
&\chi = \kappa = 0 \text{\quad or \quad} (\chi^2 + \kappa^2= 1, \ \chi \geq 0). \nonumber
\end{align}

The case $\chi = \kappa = 0$ corresponds to the Heisenberg algebra $L = \h_3$, and the case when $G$ is the Heisenberg group was first studied in~\cite{buseman}, where curves having closed projection on the plane $(x_1,x_2)$ were found without using Pontryagin maximum principle. A full description of all geodesics in this problems was obtained in \cite{berestovskiiHeisenberg} with the help of Pontryagin maximum principle, and in~\cite{CT1} with the help of convex trigonometry. Thus we assume in the sequel that $\chi^2 + \kappa^2= 1$, $\chi \geq 0$. 

The two-parameter group of transformations
$
(X_1, X_2, X_3) \mapsto (\alpha X_1, \beta X_2, \alpha \beta X_3)$, $\alpha, \beta > 0$,
reduces product table~\eq{product1} to the following one:
\begin{align}
&[X_1, X_2] = X_3, \quad [X_3, X_1] = a X_2, \quad [X_3, X_2] = b X_1, \label{product21}\\
&a, \ b \in \{ \pm 1, \ 0\}, \qquad a+b\geq 0, \qquad (a,b) \neq (0,0), \label{product22}
\end{align}
where $a = \sgn(\chi + \kappa)$,  $b = \sgn(\chi  - \kappa)$.
So we get the following table of reduction from parameters $(\chi, \kappa)$ to parameters $(a,b)$:
\begin{enumerate}
\item
$\chi + \kappa < 0$, $\chi - \kappa > 0$ ($L = \sl_2$) \quad $\then$ \quad $a = -1$, $b = 1$,
\item
$\chi + \kappa = 0$, $\chi - \kappa > 0$ ($L = \sh_2$) \quad $\then$ \quad $a = 0$, $b = 1$,
\item
$\chi + \kappa > 0$, $\chi - \kappa > 0$ ($L = \sl_2$) \quad $\then$ \quad $a = 1$, $b = 1$,
\item
$\chi + \kappa > 0$, $\chi - \kappa = 0$ ($L = \se_2$) \quad $\then$ \quad $a = 1$, $b = 0$,
\item
$\chi + \kappa > 0$, $\chi - \kappa < 0$ ($L = \su_2$) \quad $\then$ \quad $a = 1$, $b = -1$.
\end{enumerate}

Parametrize the set $U \subset \Delta$ by a control parameter $u = (u_1,u_2) \in \Omega \subset \R^2 $, so that
$
U = \{u_1X_1+u_2X_2 \mid u = (u_1, u_2) \in \Omega\}$.
Then problem~\eq{pr1}--\eq{pr3}  reads as follows:
\begin{align}
&\dot q = u_1 X_1(q) + u_2 X_2(q), \qquad q \in G, \quad u = (u_1, u_2) \in \Omega, \label{pr21}\\
&q(0) = q_0 = \Id, \quad q(T) = q_1, \label{pr22}\\
&T \to \min, \label{pr23}
\end{align}
where left-invariant vector fields $X_1$, $X_2$ on the Lie group $G$ satisfy conditions~\eq{product21}, \eq{product22}.

\subsection{Pontryagin maximum principle}
We describe extremals of problem~\eq{pr21}--\eq{pr23}.
Notice that optimal controls in problem~\eq{pr21}--\eq{pr23} exist by the Rashevsky-Chow and Filippov theorems~\cite{notes}.
Denote the cotangent bundle of the Lie group $G$ as $T^*G$, and its points as $\lambda \in T^*G$. Introduce linear on fibers of $T^*G$ Hamiltonians corresponding to the basis vector fields:
$
h_i(\lambda) = \langle \lambda, X_i(q)\rangle$, $q = \pi(\lambda)$, $i = 1, 2, 3$,
where $\pi \ : \ T^*G \to G$ is the canonical projection.

Apply the Pontryagin maximum principle~\cite{PBGM, notes} to problem~\eq{pr21}--\eq{pr23}.
The Hamiltonian function of PMP is $\mathcal{H}(u, \lambda) = u_1 h_1(\lambda) + u_2 h_2(\lambda)$. The Pontryagin maximum principle states that if a curve $q(t)$ and a control $u(t)$, $t \in [0, T]$, are optimal, then there exists a Lipschitzian curve $\lambda_t \in T_{q(t)}^*G$, $\lambda_t \neq 0$, $t \in [0, T]$, that satisfies the following conditions:
\begin{itemize}
\item
the maximality condition
\be{max}
u_1h_1 + u_2 h_2 = \max_{w \in \Omega} (w_1 h_1 + w_2h_2) = s_{\Omega}(h_1, h_2) =:H,
\ee
\item
the Hamiltonian system
\be{Ham}
\begin{cases}
\dot h_1 = - u_2 h_3, \\
\dot h_2 = u_1 h_3, \\
\dot h_3 = - a u_1 h_2 - b u_2 h_1, \\
\dot q = u_1 X_1 + u_2 X_2,
\end{cases}
\ee
\item
and the identity
 $H \equiv \const \geq 0$ along any trajectory.
\end{itemize}

\medskip
\textbf{Abnormal case.} Let $H \equiv 0$. Then $h_1 = h_2 \equiv 0$, and since $\lambda_t \neq 0$ then $h_3 \neq 0$. Then the first two equations of system~\eq{Ham} yield $u_2 h_3 = u_1 h_3 \equiv 0$, thus $u_1 = u_2 \equiv 0$. So abnormal trajectories are constant, $q \equiv\const$.

\medskip
\textbf{Cauchy nonuniqueness of extremals.}
Extremals are trajectories of a nonautonomous ODE --- the Hamiltonian system of PMP~\eq{Ham}. So there can be two different extremals $\lam^1_t \not\equiv \lam^2_t$ that intersect one another:  $\lam^1_{t_0} = \lam^2_{t_0}$. We call such phenomenon Cauchy nonuniqueness of extremals. 
There can be two reasons for Cauchy nonuniqueness:
\begin{enumerate}
\item
Cauchy nonuniqueness of extremals due to different controls: the point $\lam_{t_0}$ of an extremal $\lam_{t}$ determines a nonunique control $u(t_{0})$ via the maximality condition of PMP~\eq{max}. Then we have different controls $u_1(t) \not\equiv u_2(t)$ and the corresponding different extremals $\lam^1_t \not\equiv \lam^2_t$ with $\lam^1_{t_0} = \lam^2_{t_0} = \lam_{t_0}$.
\item
Cauchy nonuniqueness due to nonsmoothness: the Hamiltonian vector field $\vec H$ corresponding to maximized Hamiltonian $H$ of PMP  is not $C^1$-smooth. 
Then, even if maximality condition~\eq{max} of PMP  defines uniquely control as a function of covector $\lambda \mapsto u(\lambda)$, the resulting vector field $\vec{H}(\lambda)$ may have different trajectories with the same initial point (like the ODE $\dot x = x^{1/3}$). 
\end{enumerate}

In order to distinguish different reasons for Cauchy nonuniqueness of extremals, introduce the following sets:
$$
\Sing_k = \{ (h_1, h_2) \in \R^2 \mid \text{$H \notin C^k(h_1, h_2)$} \}, \qquad k = 1, 2.
$$
It is obvious that $\{0 \} \subset \Sing_1 \subset \Sing_2$ and that $\Sing_1$ consists of a finite or countable set of rays beginning at the origin.  Moreover, the set of points where $\partial\Omega^\circ$ is not $C^2$ has zero measure by Alexandrov theorem~\cite{Alexandrov}. Hence the set $\Sing_2$ consists of rays beginning at the origin, and intersection of $\Sing_2$ with the unit circle has zero measure on it.

If $(h_1,h_2) \in \Sing_1$, then maximality condition~\eq{max} determines a nonunique control, and we may have classical singular trajectories. Moreover, bang-bang extremals can join singular extremals, so mixed extremals (concatenations of bang-bang and singular extremals) may appear because of Cauchy nonuniqueness due to different controls.

If $(h_1, h_2) \in \Sing_2 \setminus \Sing_{1}$, then $H \in C^1$ and maximality condition~\eq{max} determines a unique control $u = \nabla H$. Although, the Hamiltonian vector field $\vec{H} \in C^{0} \setminus C^{1}$, thus we may have Cauchy nonuniqueness of extremals due to nonsmoothness, and this case needs additional accuracy.

Finally, if $(h_1, h_2) \in \R^2 \setminus \Sing_2$, then $H \in C^2$, thus $\vec{H} \in C^1$, and there is no Cauchy nonuniqueness of extremals.

\medskip \textbf{Normal case.} Let $H > 0$. 
The maximized Hamiltonian $H(h_1, h_2)$ is a convex positively homogeneous of order one function in the plane $\R^2_{h_1,h_2}$. 
Maximality condition~\eq{max} yields $u = (u_1, u_2) \in \partial \Omega$, thus
\be{u12theta}
u_1 = \co \theta, \qquad u_2 = \so \theta
\ee
for an angle $\theta \in \R /(2\Sn \Z)$.
Moreover, condition~\eq{max} yields $(h_1, h_2) \in H \partial \Omega^{\polar}$, thus
\be{h12theta}
h_1 = H \coo \theta^{\polar}, \qquad h_2 = H \soo \theta^{\polar}
\ee
for an angle $\theta^{\polar} \in \R /(2 \So \Z)$, $\So = \Sn(\Omega^{\polar})$.
Since
$$
\co \theta \coo \theta^{\polar} + \so \theta \soo \theta^{\polar} = \frac{u_1 h_1+ u_2 h_2 }{H}= 1,
$$
then $\theta \leftrightarrow \theta^{\polar}$ for a.e. $t$.
Further, we have
$$
\dot \tho = \frac{h_1 \dot h_2 - h_2 \dot h_1}{H^2} = (H \coo \tho \co \theta h_3 + H \soo \tho \so \theta h_3) /H^2 = h_3/H.
$$
So the vertical subsystem of Hamiltonian system~\eq{Ham} reduces to the system
\be{Ham2}
\begin{cases}
\dot \tho = h_3/H, \\
\dot h_3 = -H(a \co \theta \soo \tho + b \so \theta \coo \tho).
\end{cases}
\ee

Using Theorem~\ref{thm:general_energy_integral}, we see that $\frac12\dot{\theta^\polar}^2 + \frac12(a \soo^2\tho - b\coo^2\tho)$ is a first integral of the system. This fact can also be obtained via the left-invariant structure of the system. Indeed, the function
$
\Cn = \frac 12 (h_3^2 - b h_1^2 + a h_2^2)
$
is a Casimir on the Lie coalgebra $L^*$ for Lie--Poisson bracket, thus the ratio $\En = \Cn/H$ is a first integral of system~\eq{Ham2}. Decompose
\begin{align*}
&\En = \frac{1}{2H} h_3^2 + \Un(\tho),\\
&\Un(\tho) = \frac{1}{2H}(ah_2^2 - b h_1^2) = \frac{H}{2} (a \sin^2_{\Omega^{\polar}} \tho - b \cos^2_{\Omega^{\polar}} \tho).
\end{align*}
We call $\frac{1}{2H} h_3^2$ the kinetic energy, $\Un(\tho)$ the potential energy, and $\En$ the full energy of system~\eq{Ham2}. This system turns out to be a Hamiltonian system with the Hamiltonian $\En$:
\be{Ham3}
\begin{cases}
\dot \tho = h_3/H, & \tho \in \R/(2 \So \Z), 
\\
\dot h_3 \in -\Un'(\tho), & h_3 \in \R.
\end{cases}
\ee 

If $\Omega^\polar$ has $C^2$-smooth boundary, then $\cos_{\Omega^\polar} \theta^{\polar}$ and $\sin_{\Omega^\polar} \theta^{\polar}$ are $C^2$-smooth functions by Proposition~\ref{prop1}. In this case we have a classical smooth Hamiltonian system. If $\partial \Omega^\polar$ has corners then $\Un'(\theta^\polar)$ is an interval between left and right derivatives at a corner $\theta^\polar$. In this case we read Hamiltonian system~(\ref{Ham3}) in the Filippov sense~\cite{Filippov}, and it may have multiple solutions with the same initial data.

\begin{remark}
	The function $\Un(\tho)$ is Lipschitzian in general, since $\partial\Omega^\circ$ is Lipschitzian, and system~(\ref{Ham3}) may have non-unique solution. But if an open interval on $\partial\Omega^\circ$ is $C^2$, then $\Un\in C^2$ for corresponding angles $\theta^\circ$, and system~(\ref{Ham3}) has a unique solution for any given initial data. In other words, uniqueness may fail only on the set $\Sing_2$.
\end{remark}

The phase portrait of system~\eq{Ham3} is completely determined by the potential energy~$\Un(\tho)$. 

We call an extremal arc $\lambda_t$, $t \in [\a, \b]$, $\a < \b$:
\begin{itemize}
\item
a bang arc if
$(h_1, h_2)(\lambda_t) \notin \Sing_{{2}}$ for all $t \in (\a, \b)$,
\item
a bang-bang arc if 
$(h_1, h_2)(\lambda_t) \notin \Sing_{{2}}$ for a.e. $t\in(\a,\b)$,
\item
a singular arc if $(h_1, h_2)(\lambda_t) \in \Sing_{{2}}$ for all $t \in [\a, \b]$,
\item
mixed if it is a concatenation of a finite number of bang-bang and singular arcs. 
\end{itemize}

\begin{remark}
	Usually, the term ``singular extremal'' is used for extremals where PMP defines a non-unique control. On these extremals uniqueness of solutions to PMP is lost. Moreover, singular extremals were studied mostly for problems with one-dimensional control, where analogues of the sets $\Sing_1$ and $\Sing_2$ coincide one with another. We believe that the term ``non-singular extremals'' must be applied to extremals where everything is regular, i.e., \ PMP has the property of uniqueness of solutions. Therefore we decided to use term ``singular'' in this paper for extremals in $\Sing_2$.
\end{remark}

In examples considered below we prove that Fuller's phenomenon is not present here, i.e.,
along any extremal arc duration of all maximal bang arcs is separated from zero,  thus any extremal arc is either bang-bang, singular, or mixed.

\subsection{Bang-bang extremals}

System~\eq{Ham3} has fixed points $(\tho, h_3) = (\tho_*, 0)$, where $\Un'(\tho_*) \ni 0$.

If $\Un(\tho)$ has a local minimum at $\tho_*$, decreases at an interval $(\tho_* - \varepsilon, \tho_*]$ and increases at an interval $[\tho_*, \tho_* + \varepsilon)$, then the phase portrait of system~\eq{Ham3} has a fixed point $(\tho, h_3) = (\tho_*, 0)$ of the center type.  

If $\Un(\tho)$ has a local maximum at $\tho_*$, increases at an interval $(\tho_* - \varepsilon, \tho_*]$ and decreases at an interval $[\tho_*, \tho_* + \varepsilon)$, then the phase portrait of system~\eq{Ham3} has a fixed point $(\tho, h_3) = (\tho_*, 0)$ of the saddle type. The separatrix entering the   fixed point $(\tho_*, 0)$ in the strip $\tho \in (\tho_*-\varepsilon, \tho_*)$ comes to the fixed point for the time evaluated by the improper integral (see Section~\ref{subsec:nonuniqueness_general_ct_ode})
\be{I}
I = H \int_{\tho_* - \varepsilon}^{\tho_*} \frac{d \tho}{\sqrt{2 (\En - \Un(\tho))}}.
\ee
This time is finite or infinite depending on whether the integral $I$ converges or not. Similarly for the separatrix in the strip $\tho \in (\tho_*, \tho_* + \varepsilon)$. 
So, unlike the classic case, separatrix can enter the fixed point of saddle type in finite time. In this case, we have Cauchy nonuniqueness of extremals.

\subsection{Singular extremals}
\label{sec:singular_lie}
{Since the singular set $\Sing_2$ of Hamiltonian $H$ consists of rays starting at the origin forming set of zero measure}, and $s_\Omega(h_1,h_2)=H=\const$, any singular arc $\lambda_t$ satisfies the identities
\be{sing1}
h_1(\lambda_t) \equiv \const, \qquad h_2(\lambda_t) \equiv \const.
\ee
If the corresponding extremal trajectory $q(t)$ is not constant, then the Hamiltonian system~\eq{Ham} implies additionally that
\begin{align}
&h_3(\lambda_t) \equiv 0, \label{sing2}\\
&a u_1(t) h_2(\lambda_t) + b u_2(t) h_1(\lambda_t) \equiv 0. \label{sing3}
\end{align}
Taking into account \eq{u12theta}, \eq{h12theta} and the maximality condition \eq{max},
we conclude that a singular extremal satisfies the following conditions:
\begin{align}
&\tho \equiv \const, \label{singc1}\\
&h_3 \equiv 0, \label{singc2}\\
&a \sin_{\Omega^{\polar}} \theta^{\polar} \cos_{\Omega} \theta + b \cos_{\Omega^{\polar}} \theta^{\polar} \sin_{\Omega} \theta \equiv 0, \label{singc3}\\
&\cos_{\Omega^{\polar}} \theta^{\polar} \cos_{\Omega} \theta + \sin_{\Omega^{\polar}} \theta^{\polar} \sin_{\Omega} \theta \equiv 1, \label{singc4}\\
&(h_1(\lambda_t), h_2(\lambda_t)) \in \Sing_{{2}} \label{singc5}.
\end{align}
Conversely, any arc $\lam_t$ and control $u(t)$ that satisfy conditions \eq{singc1}--\eq{singc5} are singular.

Hence $\theta^\circ=\mathrm{const}$ is a clue parameter determining the singular extremal.

Let us investigate solutions of system \eq{singc1}--\eq{singc5} {(the investigation is similar to Section~\ref{subsec:theta_in_general_ct_ode})}. We start with equations \eq{singc3}, \eq{singc4}. They may be written in the following form:
\be{Au}
	A
	\left(
		\begin{array}{cc}
			u_1\\u_2
		\end{array}		
	\right)=
	\left(
		\begin{array}{cc}
			1\\0
		\end{array}		
	\right),
\ee
where $u=(u_1,u_2)=(\cos_\Omega\theta,\sin_\Omega\theta)$ and
\[
	A=\left(
		\begin{array}{cc}
			\cos_{\Omega^\circ}\theta^\circ & \sin_{\Omega^\circ}\theta^\circ \\
			a\sin_{\Omega^\circ}\theta^\circ& b\cos_{\Omega^\circ}\theta^\circ \\
		\end{array}		
	\right).
\]
Any solution to linear system \eq{Au} must belong to the support line of $\Omega$ defined by the first equation 
$
	\cos_{\Omega^\circ}\theta^\circ u_1 + \sin_{\Omega^\circ}\theta^\circ u_2=1$,
but it may happen that there exist solutions to linear system \eq{Au} that do not belong to~$\partial\Omega$. 

Let $u^s(t)\in\partial\Omega$ for $t\in[t_1,t_2]$ be a measurable function such that $Au^s(t)=(1,\ 0)^T$ for a.e.\ $t\in[t_1,t_2]$. Then for the initial point $q_0=\mathrm{Id}$ there exists a unique corresponding singular extremal of the form $h_1=\cos_{\Omega^\circ}\theta^\circ=\mathrm{const}$, $h_2=\sin_{\Omega^\circ}\theta^\circ=\mathrm{const}$, $h_3=0$ and $u(t)=u^s(t)$, where $q(t)$ is found as a unique solution to \eq{Ham} with $u(t)=u^s(t)$. Moreover, any singular extremal with given $\theta^\circ=\mathrm{const}$ has the described form. Hence, solutions of linear system \eq{Au} play a key role in investigating singular extremals.

Let us now investigate linear system \eq{Au}. Obviously, $\mathrm{rk}\,A\ge 1$, since $(\cos_{\Omega^\circ}\theta^\circ,\sin_{\Omega^\circ}\theta^\circ)\ne 0$. If $\mathrm{rk}\,A=2$, then linear system \eq{Au} has a unique solution~$u^s$. If $u^s\not\in \partial\Omega$ then there are no singular extremals with given~$\theta^\circ$. If $u^s\in\partial\Omega$, then for the initial point $q_0=\mathrm{Id}$, there exists a unique corresponding singular extremal with given $\theta^\polar$, and it has the constant singular control $u(t)=u^s=\mathrm{const}$.
	
Let us now consider the special case $\mathrm{rk}\,A=1$. In this case, system \eq{Au} has a solution iff the second row of $A$ is $(0,\ 0)$, i.e.,
$
	a\sin_{\Omega^\circ}\theta^\circ = b\cos_{\Omega^\circ}\theta^\circ=0$.

\begin{enumerate}
	\item If $a^2+b^2=2$ (cases $\mathfrak{sl}_2$ and $\mathfrak{su}_2$), then there are no singular extremals in the special case.
	
	\item If $a=0$ and $b=1$ (case $L=\mathfrak{sh}_2$), then there are singular extremals in the special case if $\cos_{\Omega^\circ}\theta^\circ=0$ and $(0,\sin_{\Omega^\circ}\theta^\circ)\in\mathfrak{S}_2$. Indeed, if the last two conditions are fulfilled, then linear system \eq{Au} has infinite number of solutions, which form a horizontal supporting line to $\Omega$. Intersection of this line with $\partial\Omega$ is a segment $[u^s_l,u^s_r]$ (or a point $u^s_l=u^s_r$ if $(h_1,h_2)\in\mathfrak{S}_2\setminus\mathfrak{S}_1$). Hence, for any measurable function $u^s(t)\in[u^s_l,u^s_r]$ for a.e.\ $t\in[t_1,t_2]$ (i.e.,\ $u^s(t)=(\cos_\Omega\theta(t),\sin_\Omega\theta(t))$ where $\theta(t)\leftrightarrow\theta^\circ$ for a.e. \ $t\in[t_1,t_2]$), there exists a unique corresponding singular extremal { with $h_1=H\cos_{\Omega^\circ}\theta^\circ=\const$, $h_2=H\sin_{\Omega^\circ}\theta^\circ=\const$, and $h_3=0$}.
	
	\item If $a=1$ and $b=0$ (case $L=\mathfrak{se}_2$), then there are singular extremals in the special case if $\sin_{\Omega^\circ}\theta^\circ=0$ and $(\cos_{\Omega^\circ}\theta^\circ,0)\in\mathfrak{S}_2$. If so, then the situation is similar to the previous one, but for vertical supporting lines to $\Omega$.
	
	\item If $a=b=0$ (case $L=\mathfrak{h}_3$), then there are singular extremals in special case for arbitrary $\theta^\circ$ if $(\cos_{\Omega^\circ}\theta^\circ,\sin_{\Omega^\circ}\theta^\circ)\in\mathfrak{S}_2$ (see \cite{CT1}). The corresponding singular control has the form $u^s(t)=(\cos_\Omega\theta(t),\sin_\Omega\theta(t))$ where $\theta(t)\leftrightarrow\theta^\circ$ for a.e.\ $t\in[t_1,t_2]$.
\end{enumerate}

In the sequel, we distinguish two cases of singular extremals described above:
\begin{itemize}
\item
general singular extremals in the case $\mathrm{rk}\,A=2$,
\item
special singular extremals in the case $\mathrm{rk}\,A=1$.
\end{itemize}


\subsection{Mixed extremals}
A mixed extremal appears when a bang arc $(\tho(t), h_3(t))$ enters for a finite time a point $(\tho_*, 0)$ that corresponds to a singular extremal.

\subsection{Special case: $\Omega$ a polygon}
 Let $\Omega \subset \R^2$ be a convex polygon containing the origin in its interior.
The polar set to~$\Omega$ is a convex polygon $\Omega^{\polar} = \conv(\om^{\polar}_1, \dots,  \om^{\polar}_k) \subset \R^2$  with the origin in its interior.  The generalized trigonometric functions $\coo \tho$ and $\soo \tho$ are piecewise linear  on $\R / (2 \So \Z)$ with possible corner points $\tho_1$, \dots, $\tho_k$, where $(\coo \tho_i, \soo \tho_i) = \om^{\polar}_i$, $i = 1, \dots, k$. The potential energy $\Un(\tho) = \frac 12 (a \soo^2 \tho - b \coo^2 \tho)$ is piecewise quadratic, thus $\Un'(\tho)$ is piecewise linear with the same possible corner points. So at each segment $\tho \in (\tho_i, \tho_{i+1})$, $i = 1, \dots, k$, $\tho_{k+1} = \tho_1$, system~\eq{Ham3} takes the form
$$
\left( \begin{array}{cc}
\dot \tho \\ \dot h_3 
\end{array} \right)
= 
\left( \begin{array}{cc}
0 & 1 \\
\alpha_i & 0 
\end{array} \right)
\left( \begin{array}{cc}
\tho \\ h_3 
\end{array} \right)
+
\left( \begin{array}{cc}
0 \\   \beta_i 
\end{array} \right), \qquad \alpha_i, \ \beta_i \in \R,
$$
where $\alpha_i = a \cos_{\Omega}^2 \theta_i + b \sin_{\Omega}^2 \theta_i$.
This system in the strip $  (\tho_i, \tho_{i+1}) \times \R$ has the following phase portraits (we mention in parentheses the elementary functions in which the system is integrated):

\begin{itemize}
\item[$\alpha_i>0$:] hyperbolic (hyperbolic functions),
\item[$\alpha_i<0$:] elliptic (trigonometric functions),
\item[$\alpha_i=0$:] parabolic (quadratic functions).
\end{itemize}
The phase portrait of system~\eq{Ham3} in the whole cylinder $  (\R / (2 \So \Z)) \times \R $ is glued from the phase portraits in the strips $(\tho_i, \tho_{i+1})\times\R $. Let us examine behaviour of system~\eq{Ham3} on lines $\tho=\tho_i$. If $\tho(t_0)=\tho_i$ and $\dot\tho(t_0)\ne 0$ on a trajectory, then it intersects the line $\tho=\tho_i$ at an isolated point $t=t_0$ and has a corner type singularity at it. But if $\tho(t_0)=\tho_i$ and $\dot\tho(t_0)=0$, then a singular arc $\tho(t)\equiv\tho_i$ with Cauchy nonuniqueness may appear.

Singular arcs are curves $\lambda_t \in T^* M $  that satisfy conditions~\eq{singc1}--\eq{singc5}.
Switching times correspond to motion of Hamiltonian system~\eq{Ham3} between successive vertices of the polygon~$\Omega^{\polar}$. {Note that condition of Theorem~\ref{thm:absense_of_uniqueness_for_general_ct_ode} is automatically fulfilled for each corner~$\theta^\polar_0$ with $0\in\mathcal{U}'(\theta^\polar_0)$, since the function $f$ is analytic. So, using Propositions~\ref{prop:uniqueness_of_solution_when_Omega_smooth}, \ref{prop:uniqueness_if_zero_not_in_U_derivative} and Theorem~\ref{thm:absense_of_uniqueness_for_general_ct_ode}, we obtain}

\begin{thm}\label{th:Lie-polygon}
If $\Omega$ is a convex polygon containing the origin in its interior, then
all extremals in problem~\eq{pr21}--\eq{pr23} are bang-bang, singular or mixed. {Hence, if an extremal does not contain special singular control, then the control is piecewise constant and the extremal is piecewise smooth.}
\end{thm}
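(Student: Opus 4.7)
The plan is to leverage the piecewise-polynomial structure of the potential $\Un$ on a polygonal $\Omega$. Because $\cos_{\Omega^\polar}$ and $\sin_{\Omega^\polar}$ are piecewise linear in $\theta^\polar$ with corners precisely at $\theta^\polar_1,\dots,\theta^\polar_k$, the function $\Un(\theta^\polar)=\tfrac{H}{2}(a\sin_{\Omega^\polar}^2\theta^\polar - b\cos_{\Omega^\polar}^2\theta^\polar)$ is a quadratic polynomial on each open strip $(\theta^\polar_i,\theta^\polar_{i+1})$, so $\Un'$ is affine and system~\eq{Ham3} is a smooth linear ODE of hyperbolic, elliptic, or parabolic type on each strip. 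On each such strip the correspondence $\theta\leftrightarrow\theta^\polar$ picks out a unique $\theta$ pointing to a single vertex of $\Omega$ (the vertex dual to the edge of $\Omega^\polar$ traversed by $Q_{\theta^\polar}$), so the control $u=(\cos_\Omega\theta,\sin_\Omega\theta)$ is constant at a vertex of $\Omega$ along the corresponding bang arc.

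Next I would analyze behavior at the corners $\theta^\polar_i$ using Section~\ref{sec:second-order}. By Proposition~\ref{prop:unique_solution_if_dot_theta_ne_zero} the extremal has a unique continuation across any corner reached with $h_3\neq 0$; if $h_3=0$ at $\theta^\polar_i$ and $0\notin\Un'(\theta^\polar_i)$, Proposition~\ref{prop:uniqueness_if_zero_not_in_U_derivative} again gives a unique continuation. The only source of non-uniqueness is a corner with $h_3=0$ and $0\in\Un'(\theta^\polar_i)$: by Subsection~\ref{subsec:theta_in_general_ct_ode} the stationary solution $\theta^\polar\equiv\theta^\polar_i$ is admissible and corresponds either to a general or to a special singular arc (depending on the rank of the matrix~$A$ in~\eq{Au}), while Theorem~\ref{thm:absense_of_uniqueness_for_general_ct_ode}, combined with Proposition~\ref{prop:corner_guarantees_non_uniqueness}, describes the one-parameter family of trajectories that can leave the corner after an arbitrary waiting time---the integrability hypothesis $(\En-\Un(\theta^\polar))^{-1/2}\in L_1$ near the corner is verified by the affine $\Un'$ on the adjacent strip having a nonzero one-sided value in the non-degenerate cases. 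Thus every extremal is locally a concatenation of bang arcs in strips and singular arcs sitting at corners.

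The main obstacle, and the last step, is to rule out infinite accumulation of switchings (a Fuller-type phenomenon). For this I would use conservation of the full energy $\En=\tfrac{1}{2H}h_3^2+\Un(\theta^\polar)$ from Theorem~\ref{thm:general_energy_integral}: boundedness of $\Un$ forces $h_3$ to lie in a fixed compact interval and the trajectory to lie on a compact level set of $\En$ in the cylinder $(\R/2\So\Z)\times\R$. Within each of the finitely many strips the explicit linear flow then shows that the time between consecutive visits of $\theta^\polar(t)$ to corner values is bounded below by a positive constant depending only on the energy level and the strip width (the elliptic case has period $2\pi/\sqrt{|\alpha_i|}$ independent of amplitude, and the hyperbolic and parabolic transit or bounce times are elementary functions of the boundary value of $h_3$, which takes only finitely many possible absolute values on the compact level set). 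Hence any finite time interval admits only finitely many switchings, and the extremal decomposes into finitely many bang and singular arcs, i.e., it is bang-bang, singular, or mixed. If no special singular arc occurs, each piece has constant control (a vertex of $\Omega$ on a bang arc; the unique solution of~\eq{Au} on a general singular arc, per Subsection~\ref{sec:singular_lie}), so the total control is piecewise constant and $q(t)$ is piecewise smooth.
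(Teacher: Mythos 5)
Your argument is correct and follows the paper's own route: piecewise linearity of the vertical subsystem on the strips between corners of $\Omega^\polar$ (giving bang arcs with constant vertex controls), the uniqueness and non-uniqueness results of Section~\ref{sec:second-order} (Propositions~\ref{prop:uniqueness_of_solution_when_Omega_smooth}, \ref{prop:unique_solution_if_dot_theta_ne_zero}, \ref{prop:uniqueness_if_zero_not_in_U_derivative}, Proposition~\ref{prop:corner_guarantees_non_uniqueness} and Theorem~\ref{thm:absense_of_uniqueness_for_general_ct_ode}) at the corners, and the classification of general versus special singular controls via system~\eq{Au}. Your explicit exclusion of Fuller-type accumulation of switchings via conservation of $\En$ and the finitely many possible corner entry states is a welcome elaboration of a step the paper only asserts (``switching times correspond to motion between successive vertices of the polygon''), but it does not change the method.
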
 

In Subsections~\ref{subsec:li} and~\ref{subsec:l1} we consider in full detail the cases of the squares $\Omega = \{u \in \R^2 \mid \|u\|_{\infty} \leq 1\}$ and $\Omega = \{u \in \R^2 \mid \|u\|_{1} \leq 1\}$. 

\subsection{Special case: $\Omega$ strictly convex}
Let $\Omega \subset \R^2$ be a strictly convex compact set containing the origin in its interior.
Then the support function~$H$ is $C^1$ out of the origin, i.e., $\Sing_1 = \{ 0 \}$. 
If in addition $\partial\Omega^{\polar}$ is $C^2$-smooth, then $\vec{H} \in C^1$ and there are neither singular nor mixed trajectories.
Moreover,   if $\partial\Omega^{\polar}$ is $W^2_\infty$, then the solution is unique by Proposition~\ref{prop:uniqueness_of_solution_when_Omega_smooth}. So the following theorem holds.

\begin{thm}\label{th:Lie-strict}
If $\Omega$ is a strictly convex compact set containing the origin in its interior and $\partial\Omega^{\polar}$ is  {$C^k$-}smooth, $k\ge 2$, then all extremals in problem~\eq{pr21}--\eq{pr23} are bang and {$C^k$-}smooth.
Moreover, there is no Cauchy nonuniqueness of extremals.
\end{thm}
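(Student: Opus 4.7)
The plan is to reduce the theorem to two pillars: smoothness of the maximized Hamiltonian $H$ outside the origin, and the classical Picard theorem applied to the Hamiltonian vector field $\vec{H}$. The abnormal case is immediate from the analysis already carried out earlier: $H \equiv 0$ forces $u \equiv 0$ and hence $q \equiv q_0$, a constant (and thus trivially $C^\infty$) trajectory with no Cauchy branching. All further work takes place in the normal case $H > 0$.

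My first step is to show that $\Sing_2 \subseteq \{0\}$. To do this I would apply Proposition~\ref{prop1} to the set $\Omega^\polar$: the hypothesis says that $\partial\Omega^\polar$ is a regular $C^k$-curve with $k \geq 2$, and the equivalence $(i) \Leftrightarrow (iii)$ of that proposition, read with $\Omega$ replaced by $\Omega^\polar$ and invoking the bipolar identity $\Omega^{\polar\polar} = \Omega$, yields $s_\Omega = H \in C^k(\R^2 \setminus \{0\})$. In particular $H \in C^2$ there, so $\Sing_2 \subseteq \{0\}$, and a fortiori $\Sing_1 \subseteq \{0\}$. Any normal extremal satisfies $s_\Omega(h_1,h_2) = H > 0$, hence $(h_1,h_2) \neq 0$ at every instant, so the extremal is bang by definition. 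Note that the strict convexity assumption on $\Omega$ is a formal consequence of $\partial\Omega^\polar$ being $C^1$-smooth (equivalently, of each boundary point of $\Omega^\polar$ admitting a unique supporting line), and plays no independent role once $\Sing_2 \subseteq \{0\}$ is in hand.

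My second step is to upgrade this to smoothness and uniqueness of the flow. Since $H$ is $C^k$ off the origin, $\nabla H$ is $C^{k-1}$ there, and the maximality condition~\eq{max} recovers the control as $u(\lambda) = \nabla_{(h_1,h_2)} H(h_1,h_2)$, a $C^{k-1}$ function of $\lambda$ on the open set $T^*G$ minus the zero section. Substituting into Hamiltonian system~\eq{Ham}, the vector field $\vec{H}$ is $C^{k-1}$ on the same open set, and since $k-1 \geq 1$ it is locally Lipschitz continuous, so the Picard theorem rules out Cauchy nonuniqueness of extremals. Standard ODE regularity then yields $\lambda_t \in C^k$, and the state equation $\dot q = u_1 X_1(q) + u_2 X_2(q)$ gives $q(t) \in C^k$ as well.

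I do not expect a genuine obstacle here: the entire argument pivots on the single observation that $H \in C^k(\R^2 \setminus \{0\})$, which is precisely what Proposition~\ref{prop1} delivers once read with $\Omega^\polar$ in place of $\Omega$. The only care needed is in invoking that symmetric form of the proposition, which is immediate from the bipolar identity already used throughout Section~\ref{sec:convex_trig}; everything else is routine ODE theory.
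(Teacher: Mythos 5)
Your proof is correct, but it takes a genuinely different route from the paper's. The paper never works with the full Hamiltonian vector field $\vec H$ directly: its proof passes through the reduced system~\eq{Ham3} in the $(\tho,h_3)$ coordinates, deducing that $\coo$ and $\soo$ are $C^k$ (via the smoothness propositions of Section~\ref{sec:convex_trig}), hence $\Un\in C^k$ and $\tho(t)\in C^{k+1}$, then recovering the control through the correspondence $\theta\leftrightarrow\tho$ using Corollary~\ref{cor:curvature} (which gives $\theta(\tho)\in C^{k-1}$), and finally invoking Proposition~\ref{prop:uniqueness_of_solution_when_Omega_smooth} for uniqueness. You instead stay upstairs: Proposition~\ref{prop1}$(i)\Leftrightarrow(iii)$ applied to $\Omega^\polar$ together with the bipolar identity gives $H=s_\Omega\in C^k$ away from the origin, the envelope/Danskin identification $u=\nabla_{(h_1,h_2)}H$ makes the control $C^{k-1}$, and Picard plus bootstrapping on system~\eq{Ham} delivers both uniqueness and $C^k$ regularity of $\lambda_t$ and $q(t)$. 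Your route is shorter and avoids the $\theta\leftrightarrow\tho$ machinery entirely; it also surfaces the nice observation that strict convexity of $\Omega$ is already forced by $C^1$-regularity of $\partial\Omega^\polar$, so that hypothesis is redundant. What the paper's route buys in exchange is the explicit extra degree of smoothness of the angle variable ($\tho(t)\in C^{k+1}$) and direct compatibility with the phase-portrait analysis of $\Un$ used throughout Section~\ref{sec:3DLie}. One minor imprecision to fix: the natural domain of regularity of $\vec H$ is the open set $\{(h_1,h_2)\neq(0,0)\}$, not $T^*G$ minus the zero section (the latter still contains points with $h_1=h_2=0$, $h_3\neq 0$); this is harmless because a normal extremal satisfies $s_\Omega(h_1,h_2)=H>0$ and therefore never leaves the former set.
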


\begin{proof}
	Indeed, $\cos_{\Omega^\polar}$ and $\sin_{\Omega^\polar}$ are $C^k$ functions by Proposition~\ref{prop:w_k_p}. Hence, $\mathcal{U}$ is $C^k$, and $\theta^\polar(t)$ is $C^{k+1}$. Moreover $\theta(\theta^\polar)$ is $C^{k-1}$ by Corollary~\ref{cor:curvature}. Thus $u_1(t)=\cos_\Omega\theta(\theta^\polar(t))$ and $u_2(t)=\sin_\Omega\theta(\theta^\polar(t))$ are $C^{k-1}$. Hence extremals are $C^k$ by~\eqref{pr21}.
\end{proof}

We consider the case $\Omega = \{u \in \R^2 \mid \|u\|_{p} \leq 1\}$, $1 < p < \infty$, in Subsec.~\ref{subsec:lp}.

\subsection{Example: $\Omega = \{ \|u\|_{\infty} \leq 1\}$}\label{subsec:li}
Let $\Omega = \{(u_1, u_2) \in \R^2 \mid |u_1|, \ |u_2| \leq 1\}$. 
Then $\Omega^{\polar} = \{(h_1, h_2) \in \R^2 \mid |h_1| + |h_2| \leq 1\}$ and $\mathbb \So = 2$. 
The functions $\soo \tho$ and $\coo \tho$ are piecewise linear and 4-periodic, they are plotted at Figs.~\ref{fig:sinli} and~\ref{fig:cosli} (these plots were first given in~\cite{CT1}). 

\twofiglabels{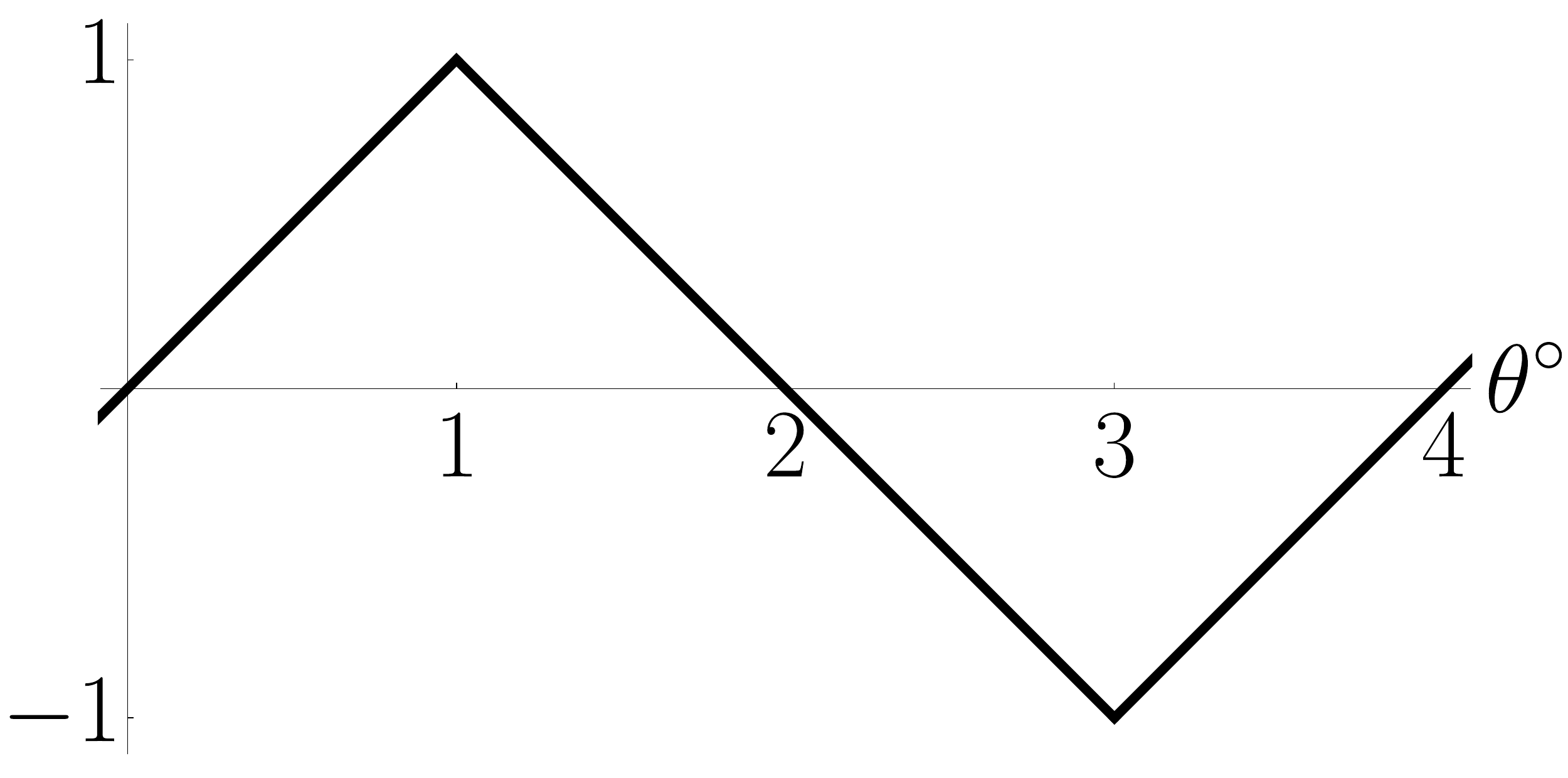}{Plot of $\soo  \tho$: $\ell_\infty$}{fig:sinli}{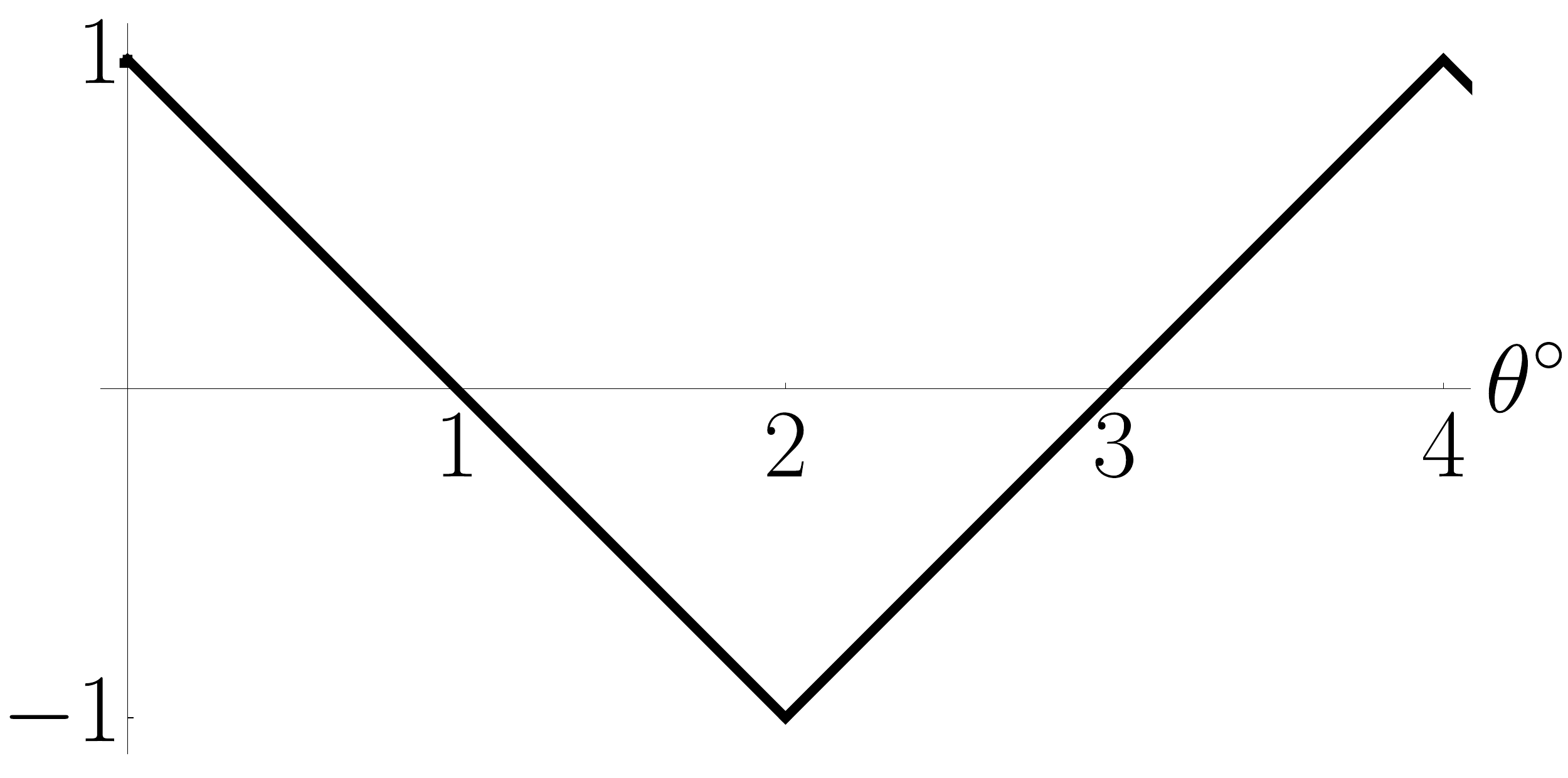}{Plot of $\coo \tho$: $\ell_\infty$}{fig:cosli}{0.35}{0.35}

We have $\soo^2 \tho = (1 - |(\tho{\mod 2})-1|)^2$, see Fig.~\ref{fig:sin2li}, and $\coo^2 \tho=(1-(\tho {\mod 2}))^2$, see Fig.~\ref{fig:cos2li}. 

\twofiglabel{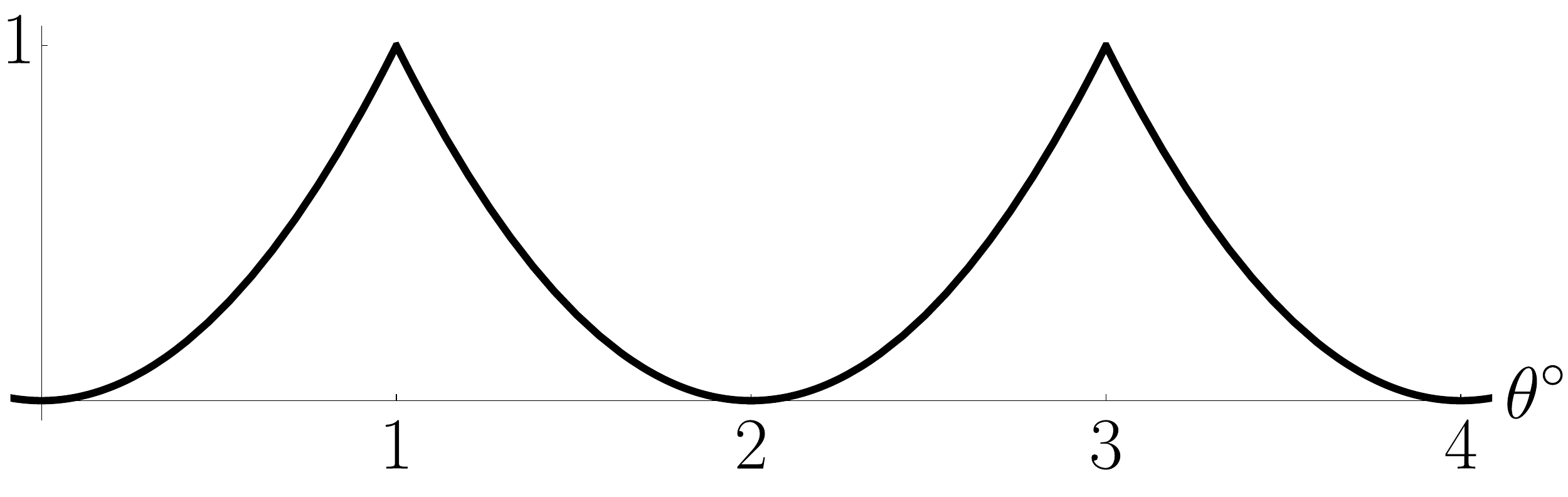}{Plot of $\soo^2 \tho$: $\ell_\infty$}{fig:sin2li}{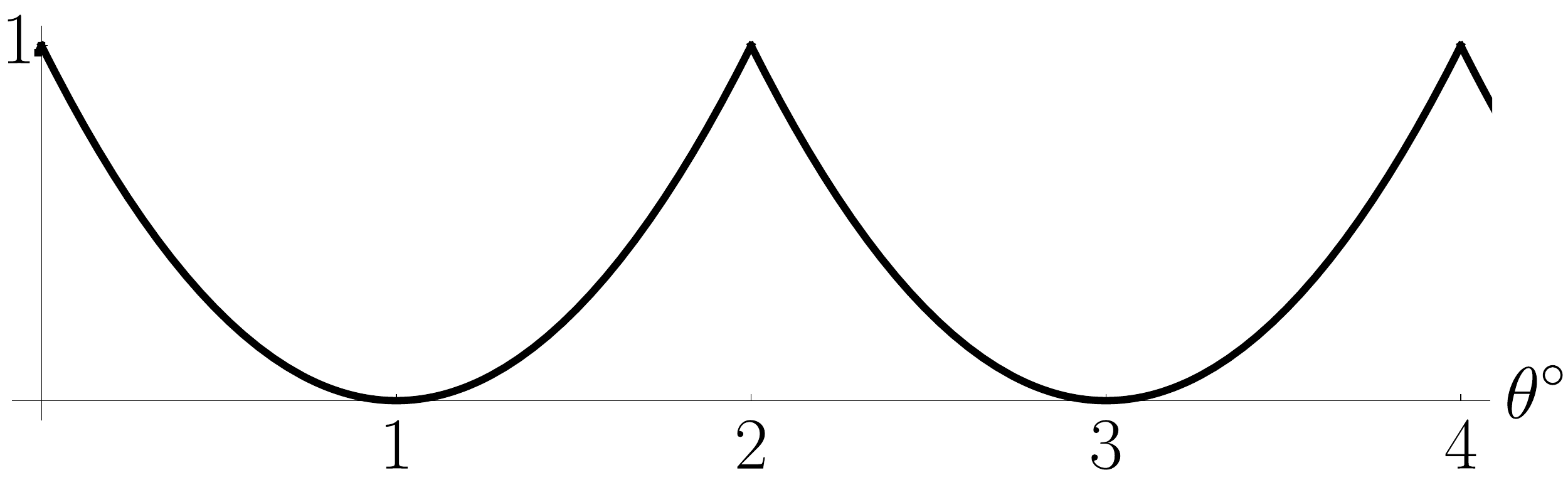}{Plot of $\coo^2 \tho$: $\ell_\infty$}{fig:cos2li}

\medskip \textbf{Case 1: $a = -1$, $b = 1$.} We have $\Un(\tho {+k}) = -(2 (\tho)^2 - 2 \tho +1)/2$ {for $\tho\in[0;1]$, $k\in \Z$}, see Fig.~\ref{fig:U1li}. In the strip $(0, 1) \times \R$ system~\eq{Ham3} has the form
$
\dot \tho = h_3$, $\dot h_3 = 2 \tho - 1$,
it has the saddle phase portrait (see Fig.~\ref{fig:phase1li}) and the bang trajectories
\begin{align*}
&\tho = C_1 \cosh (\sqrt 2 t) + C_2 \sinh (\sqrt 2 t) + 1/2,\\
&h_3 = C_1 \sqrt 2 \sinh (\sqrt 2 t) + C_2 \sqrt 2 \cosh(\sqrt 2 t), \\
&C_1 = \tho_0 - 1/2, \qquad C_2 = h_3^0/\sqrt 2.
\end{align*} 
\twofiglabel
{U1li}{Plot of $\Un(\tho)$: $\ell_\infty$, Case 1}{fig:U1li}
{U2li}{Plot of $\Un(\tho)$: $\ell_\infty$, Case 2}{fig:U2li}

General singular extremals are $(\tho, h_3) = (n, 0)$, $n \in \Z$. There are {neither}  mixed  {nor special singular} extremals.  

\medskip \textbf{Case 2: $a = 0$, $b = 1$.} We have $\Un(\tho {+2k}) = -(\tho -1)^2/2$ {for $\tho\in[0;2]$, $k\in\Z$}, see Fig.~\ref{fig:U2li}. In the strip $(0, 2) \times \R$ system~\eq{Ham3} has the form
$
\dot \tho = h_3$, $\dot h_3 = \tho - 1$,
it has the saddle phase portrait (see Fig.~\ref{fig:phase2li}) and the bang trajectories
\begin{align*}
&\tho = C_1 \cosh t + C_2 \sinh t + 1,\\
&h_3 = C_1 \sinh t + C_2 \sinh t, \\
&C_1 = \tho_0 - 1, \qquad C_2 = h_3^0.
\end{align*} 

\twofiglabels
{phase1li}{Phase portrait of \eq{Ham3}: $\ell_\infty$, Case 1}{fig:phase1li}
{phase2li}{Phase portrait of \eq{Ham3}: $\ell_\infty$, Case 2}{fig:phase2li}{0.39}{0.35}
General singular extremals are $(\tho, h_3) = (2n, 0)$, $n \in \Z$, and special singular extremals are $(\tho, h_3) = (2n+1, 0)$, $n \in \Z$. There are no mixed extremals. 

\medskip \textbf{Case 3: $a = 1$, $b = 1$.} We have $\Un(\tho {+2k}) = 1/2 -|\tho -1|$ {for $\tho\in[0;2]$, $k\in\Z$}, see Fig.~\ref{fig:U3li}. In the strip $(0, 1) \times \R$ system~\eq{Ham3} has the form
$
\dot \tho = h_3$, $\dot h_3 = - 1$,
it has the parabolic phase portrait (see Fig.~\ref{fig:phase3li}) and the bang trajectories
\begin{align*}
&\tho = \tho_0 + h_3^0 t - t^2/2,\\
&h_3 = h_3^0 - t.
\end{align*} 
\twofiglabels
{U3li}{Plot of $\Un(\tho)$: $\ell_\infty$, Case 3}{fig:U3li}
{U4li}{Plot of $\Un(\tho)$: $\ell_\infty$, Case 4}{fig:U4li}{0.35}{0.47}

General singular extremals are $(\tho, h_3) = (n, 0)$, $n \in \Z$.  {There are no special singular extremals.}
The parabolas enter the points $(\tho, h_3) = (1, 0)$ and $(3,0)$ for finite time. Thus
at these points  bang extremals join singular extremals, there appear mixed extremals.

\medskip \textbf{Case 4: $a = 1$, $b = 0$.} We have $\Un(\tho {+2k}) = (1 - |\tho-1|)^2/2$ {for $\tho\in[0;2]$, $k\in\Z$}, see Fig.~\ref{fig:U4li}. In the strip $(0, 1) \times \R$ system~\eq{Ham3} has the form
$
\dot \tho = h_3$, $\dot h_3 = -\tho$,
it has the center phase portrait (see Fig.~\ref{fig:phase4li}) and the bang trajectories --- arcs of circles
\begin{align*}
&\tho = C_1 \cos t + C_2 \sin t,\\
&h_3 = -C_1 \sin t + C_2 \cos t, \\
&C_1 = \tho_0, \qquad C_2 = h_3^0.
\end{align*} 
\twofiglabels
{phase3li}{Phase portrait of \eq{Ham3}: $\ell_\infty$, Case 3}{fig:phase3li}
{phase4li}{Phase portrait of \eq{Ham3}: $\ell_\infty$, Case 4}{fig:phase4li}{0.35}{0.47}

General singular extremals are $(\tho, h_3) = (2n+1, 0)$, $n \in \Z$, and special singular extremals are $(\tho, h_3) = (2n, 0)$, $n \in \Z$. 
The circles enter the points $(\tho, h_3) = (1, 0)$ and $(3,0)$ for finite time. Thus
at these points  bang extremals join {(general)} singular extremals, there appear mixed extremals.

\medskip \textbf{Case 5: $a = 1$, $b = -1$.} We have $\Un(\tho {+k}) = (\tho)^2 - \tho + 1/2$ {for $\tho\in[0;1]$, $k\in\Z$}, see Fig.~\ref{fig:U5li}. In the strip $(0, 1) \times \R$ system~\eq{Ham3} has the form
$
\dot \tho = h_3$, $\dot h_3 = 1-2\tho$,
it has the center phase portrait (see Fig.~\ref{fig:phase5li}) and the bang trajectories --- arcs of ellipses
\begin{align*}
&\tho = C_1 \cos (\sqrt 2 t) + C_2 \sin (\sqrt 2 t) + 1/2,\\
&h_3 = -C_1 \sqrt 2 \sin (\sqrt 2 t) + C_2 \sqrt 2 \cos(\sqrt 2 t), \\
&C_1 = \tho_0-1/2, \qquad C_2 = h_3^0/\sqrt 2.
\end{align*} 

\twofiglabels
{U5li}{Plot of $\Un(\tho)$: $\ell_\infty$, Case 5}{fig:U5li}
{phase5li}{Phase portrait of \eq{Ham3}: $\ell_\infty$, Case 5}{fig:phase5li}{0.47}{0.35}

General singular extremals are $(\tho, h_3) = (n, 0)$, $n \in \Z$. {There are no special singular extremals.}
The ellipses enter the equilibria $(\tho, h_3) = (n,0)$ for finite time. Thus
at these points  bang extremals join singular extremals, there appear mixed extremals.

\subsection{Example: $\Omega = \{ \|u\|_{1} \leq 1\}$}\label{subsec:l1} 
Let $\Omega = \{(u_1, u_2) \in \R^2 \mid |u_1|+  |u_2| \leq 1\}$. 
Then $\Omega^{\polar} = \{(h_1, h_2) \in \R^2 \mid |h_1|,\ |h_2| \leq 1\}$ and $\mathbb S(\Omega^{\polar}) = 4$. 
The functions $\soo \tho$ and $\coo \tho$ are piecewise linear and 8-periodic, they are plotted at Figs.~\ref{fig:sinl1} and~\ref{fig:cosl1}  (these plots were first given in~\cite{CT1}). 

\twofiglabel{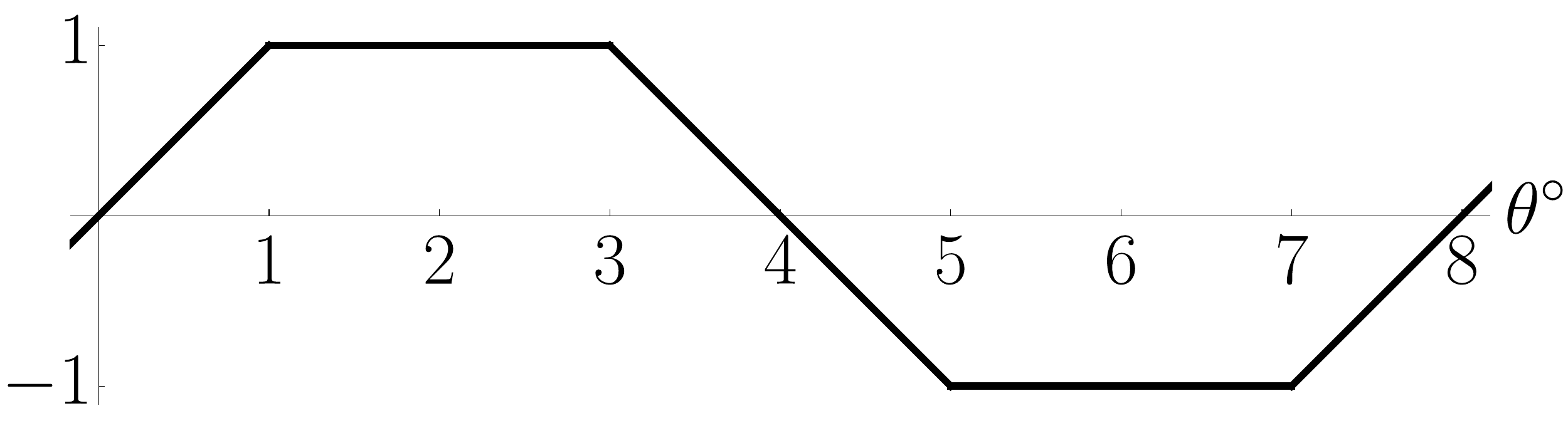}{Plot of $\soo \tho$: $\ell_1$}{fig:sinl1}{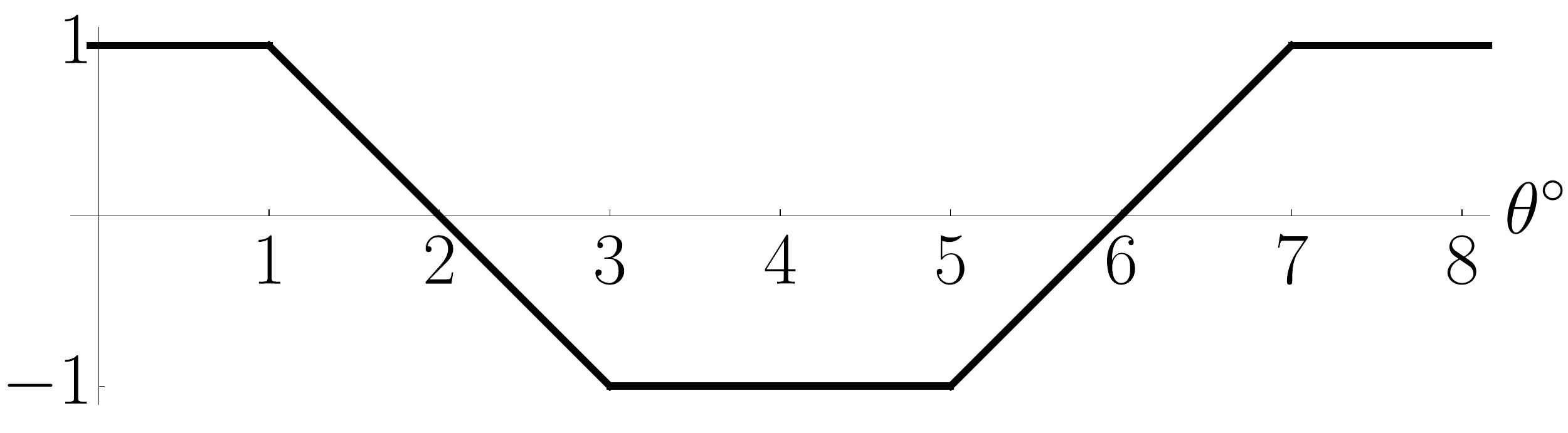}{Plot of $\coo \tho$: $\ell_1$}{fig:cosl1}

The functions $\soo^2 \tho$, $\coo^2 \tho$ are 4-periodic, even with respect to $\tho = 2$, with
$$
\soo^2 \tho = 
\begin{cases}
(\tho)^2, & \tho \in [0, 1], \\
1, & \tho \in [1, 2], 
\end{cases}
\qquad
\coo^2 \tho = 
\begin{cases}
1, & \tho \in [0, 1], \\
 (\tho-2)^2, & \tho \in [1, 2],
\end{cases}
$$
see Fig.~\ref{fig:sin2l1} and   Fig.~\ref{fig:cos2l1}.

\twofiglabel{sin2l1}{Plot of $\soo^2 \tho$: $\ell_1$}{fig:sin2l1}{cos2l1}{Plot of $\coo^2 \tho$: $\ell_1$}{fig:cos2l1}  

{Since the set $\Omega$ has neither horizontal nor vertical edges, all singular extremals are of general type (see Subsec.~\ref{sec:singular_lie}).}

\medskip \textbf{Case 1: $a = -1$, $b = 1$.} The function $\Un(\tho)$ is 2-periodic  { and even}, with
$\Un(\tho) = -(1+(\tho)^2)/2$ for $\tho \in [{-1},1]$,
see Fig.~\ref{fig:U1l1}.  In the strip $(0, 1) \times \R$ system~\eq{Ham3} has the form
$
\dot \tho = h_3$, $\dot h_3 = \tho$,
it has the saddle phase portrait (see Fig.~\ref{fig:phase1l1}) and the bang trajectories
\begin{align*}
&\tho = C_1 \cosh t + C_2 \sinh  t,\\
&h_3 = C_1 \sinh t + C_2 \cosh  t, \\
&C_1 = \tho_0, \qquad C_2 = h_3^0.
\end{align*}

\twofiglabel
{U1l1}{Plot of $\Un(\tho)$: $\ell_1$, Case 1}{fig:U1l1}
{U2l1}{Plot of $\Un(\tho)$: $\ell_1$, Case 2}{fig:U2l1}

General singular extremals are $(\tho, h_3) = (2n+1, 0)$, $n \in \Z$. 
There are no mixed extremals.  

\medskip \textbf{Case 2: $a = 0$, $b = 1$.} The function $\Un(\tho)$ is 4-periodic {and even}, with
$$
\Un(\tho) =  
\begin{cases}
-1/2, & \tho \in [0, 1], \\
 -(\tho-2)^2/2, & \tho \in [1, 2],
\end{cases}
$$
see Fig.~\ref{fig:U2l1}.  
In the strip $(0, 1) \times \R$ the phase portrait of system~\eq{Ham3} consists of bang trajectories --- horizontal segments
$
\tho = \tho_0 + h_3^0t$, $h_3 \equiv h_3^0 \neq 0
$
and equilibria
$
\tho \equiv \const \in (0, 1)$, $h_3 = 0$.
In the strip $(1, 2) \times \R$ system~\eq{Ham3} has the form
$
\dot \tho = h_3$, $\dot h_3 = \tho-2$,
it has the saddle phase portrait (see Fig.~\ref{fig:phase2l1}) and the bang trajectories
\begin{align*}
&\tho = C_1 \cosh t + C_2 \sinh  t + 2,\\
&h_3 = C_1 \sinh t + C_2 \cosh  t, \\
&C_1 = \tho_0 - 2, \qquad C_2 = h_3^0.
\end{align*} 

\twofiglabels
{phase1l1}{Phase portrait of \eq{Ham3}: $\ell_1$, Case 1}{fig:phase1l1}
{phase2l1}{Phase portrait of \eq{Ham3}: $\ell_1$, Case 2}{fig:phase2l1}{0.35}{0.35}

General singular extremals are $(\tho, h_3) = (2n+1, 0)$, $n \in \Z$. 
There are no mixed extremals. 

\medskip \textbf{Case 3: $a = 1$, $b = 1$.} The function $\Un(\tho)$ is 4-periodic, even  and odd w.r.t. $\tho = 1$, with $\Un(\tho) = ((\tho)^2 -1)/2$ for $\tho \in [0, 1]$.
see Fig.~\ref{fig:U3l1}.  
In the strip $(0, 1) \times \R$ system~\eq{Ham3} has the form
$
\dot \tho = h_3$, $\dot h_3 = -\tho$,
it has the center phase portrait  and the bang trajectories --- arcs of circles
\begin{align*}
&\tho = C_1 \cos t + C_2 \sin  t,\\
&h_3 = -C_1 \sin t + C_2 \cos  t, \\
&C_1 = \tho_0, \qquad C_2 = h_3^0.
\end{align*}
The circles enter the point $(\tho, h_3) = (1, 0)$ for finite time. 
In the strip $(1, 2) \times \R$ system~\eq{Ham3} has the form
$
\dot \tho = h_3$, $\dot h_3 = \tho-2$,
it has the saddle phase portrait  (see Fig.~\ref{fig:phase3l1}) and the bang trajectories
\begin{align*}
&\tho = C_1 \cosh t + C_2 \sinh  t + 2,\\
&h_3 = C_1 \sinh t + C_2 \cosh  t, \\
&C_1 = \tho_0 - 2, \qquad C_2 = h_3^0.
\end{align*}

\twofiglabel
{U3l1}{Plot of $\Un(\tho)$: $\ell_1$, Case 3}{fig:U3l1}
{U4l1}{Plot of $\Un(\tho)$: $\ell_1$, Case 4}{fig:U4l1}

There are neither singular nor mixed extremals. 

\medskip \textbf{Case 4: $a = 1$, $b = 0$.} The function $\Un(\tho)$ is 4-periodic {and even}, with
$$
\Un(\tho) =  
\begin{cases}
 (\tho)^2/2, & \tho \in [0, 1], \\
 1/2, & \tho \in [1, 2],
\end{cases}
$$
see Fig.~\ref{fig:U4l1}.  
In the strip $(0, 1) \times \R$ system~\eq{Ham3} has the form
$
\dot \tho = h_3$, $\dot h_3 = -\tho$,
it has the center phase portrait  and the bang trajectories --- arcs of circles
\begin{align*}
&\tho = C_1 \cos t + C_2 \sin  t,\\
&h_3 = -C_1 \sin t + C_2 \cos  t, \\
&C_1 = \tho_0, \qquad C_2 = h_3^0.
\end{align*}
In the strip $(1, 2) \times \R$ the phase portrait of system~\eq{Ham3} consists of horizontal segments
$
\tho = \tho_0 + h_3^0t$, $h_3 \equiv h_3^0\neq 0$
and equilibria
$
\tho = \const \in (1, 2)$, $h_3 \equiv 0$,
see Fig.~\ref{fig:phase4l1}.

\twofiglabels
{phase3l1}{Phase portrait of \eq{Ham3}: $\ell_1$, Case 3}{fig:phase3l1}
{phase4l1}{Phase portrait of \eq{Ham3}: $\ell_1$, Case 4}{fig:phase4l1}{0.35}{0.47}

General singular extremals are $(\tho, h_3) = (2n+1, 0)$, $n \in \Z$. 
The circles enter the points $(\tho, h_3) = (2n+1, 0)$ for finite time. Thus
at these points  bang extremals join singular extremals, there appear mixed extremals.

\medskip \textbf{Case 5: $a = 1$, $b = -1$.} The function $\Un(\tho)$ is 2-periodic { and even}, with
$\Un(\tho) = (1+(\tho)^2)/2$ for $\tho \in [0,1]$,
see Fig.~\ref{fig:U5l1}.  In the strip $(0, 1) \times \R$ system~\eq{Ham3} has the form
$
\dot \tho = h_3$, $\dot h_3 = \tho$,
it has the center phase portrait (see Fig.~\ref{fig:phase5l1}) and the bang trajectories --- arcs of circles
\begin{align*}
&\tho = C_1 \cos t + C_2 \sin  t,\\
&h_3 = -C_1 \sin t + C_2 \cos  t, \\
&C_1 = \tho_0, \qquad C_2 = h_3^0.
\end{align*}

\twofiglabels
{U5l1}{Plot of $\Un(\tho)$: $\ell_1$, Case 5}{fig:U5l1}
{phase5l1}{Phase portrait of \eq{Ham3}: $\ell_1$, Case 5}{fig:phase5l1}{0.47}{0.35}

General singular extremals are $(\tho, h_3) = (2n+1, 0)$, $n \in \Z$. 
The circles enter the points $(\tho, h_3) = (2n+1, 0)$ for finite time. Thus
at these points  bang extremals join singular extremals, there appear mixed extremals.

\subsection{Example: $\Omega = \{ \|u\|_{p} \leq 1\}$, $1 < p < \infty$}\label{subsec:lp} 
Let $\Omega = \{(u_1, u_2) \in \R^2 \mid |u_1|^p+  |u_2|^p \leq 1\}$, and let  $1 < p < \infty$, then $\Omega$ is a strictly convex domain. 
Then $\Omega^{\polar} = \{(h_1, h_2) \in \R^2 \mid |h_1|^q+ |h_2|^q \leq 1\}$, $q = p/(1-p)\in (1, \infty)$, and $\mathbb S^{\polar} = \mathbb S(\Omega^{\polar}) = 4 \Gamma^2(1+1/q)/\Gamma(1+2/q)$. 
The function $\soo \tho$ has period $2 \mathbb S^{\polar}$, is odd w.r.t. $\tho =  \mathbb S^{\polar}$ and even w.r.t. $\tho =  \mathbb S^{\polar}/2$. 
The function $\coo \tho$ has period $2 \mathbb S^{\polar}$, is even w.r.t. $\tho =  \mathbb S^{\polar}$ and odd w.r.t. $\tho =  \mathbb S^{\polar}/2$. 
If $1< q < 2$, then $\soo(\mathbb S^{\polar}/2) < 1/\sqrt{2}$ and $\coo(\mathbb S^{\polar}/2) < 1/\sqrt{2}$. If $2< q < \infty$, then $\soo(\mathbb S^{\polar}/2) > 1/\sqrt{2}$ and $\coo(\mathbb S^{\polar}/2) > 1/\sqrt{2}$.
The functions $\soo \tho$
and $\coo \tho$ are plotted for $q = 5/4$ 
at Figs.~\ref{fig:sinlpm} and~\ref{fig:coslpm}, and for $q = 4$ 
at Figs.~\ref{fig:sinlpb} and~\ref{fig:coslpb}.

In the $\ell_p$ case the functions $\soo \tho$ and $\coo \tho$ are parametrized by hypergeometric functions and system~\eq{Ham3} seems hard to be explicitly integrable\footnote{Nonetheless it is Liouville integrable as we have shown.}; although, the qualitative structure of the phase portrait is easily constructed below. The sub-Riemannian case $\Omega = \{\|u\|_{2} \leq 1\}$ leads to the pendulum equation and is integrable in elliptic functions. 

\twofiglabels{sinlpm}{Plot of $\soo \tho$: $\ell_p$, $q <2$}{fig:sinlpm}{coslpm}{Plot of $\coo \tho$: $\ell_p$, $q <2$}{fig:coslpm}{0.35}{0.35}

\twofiglabel{sinlpb}{Plot of $\soo \tho$: $\ell_p$, $q >2$}{fig:sinlpb}{coslpb}{Plot of $\coo \tho$: $\ell_p$, $q >2$}{fig:coslpb}

The functions $\soo^2 \tho$, $\coo^2 \tho$ have period $\mathbb S^{\polar}$ and are even. 
The functions $\soo^2 \tho$
and $\coo^2 \tho$ are plotted for $q = 5/4$ 
at Figs.~\ref{fig:sin2lpm} and~\ref{fig:cos2lpm}, and for $q = 4$ 
at Figs.~\ref{fig:sin2lpb} and~\ref{fig:cos2lpb}.

\twofiglabels{sin2lpm}{Plot of $\soo^2 \tho$: $\ell_p$, $q <2$}{fig:sin2lpm}{cos2lpm}{Plot of $\coo^2 \tho$: $\ell_p$, $q <2$}{fig:cos2lpm}{0.35}{0.35}

\twofiglabel{sin2lpb}{Plot of $\soo^2 \tho$: $\ell_p$, $q >2$}{fig:sin2lpb}{cos2lpb}{Plot of $\coo^2 \tho$: $\ell_p$, $q >2$}{fig:cos2lpb}

In the case $q \geq 2$ the boundary $\partial \Omega^{\polar}$ and the Hamiltonian $H$ are $C^2$-smooth.
By Th.~\ref{th:Lie-strict}, 
 all extremals are bang and smooth. There is no Cauchy nonuniqueness of extremals.

In the case $q \in (1, 2)$ the boundary $\partial\Omega^{\polar}$ and the Hamiltonian $H$ are $C^2$-smooth if $h_1h_2 \neq 0$, and belong to $C^1\setminus C^2$ if $h_1h_2 = 0$, $h_1^2+h_2^2\neq 0$. In this case there are  singular  {and} mixed extremals. Cauchy nonuniqueness due to nonsmoothness appears in some cases, see Th.~\ref{th:lp} below. 

\medskip \textbf{Case 1: $a=-1$, $b=1$.}
The function $\Un(\tho) = - (\soo^2 \tho + \coo^2 \tho)/2$ has period $\mathbb S^{\polar}/2$ and is even with respect to $\tho = \mathbb S^{\polar}/4$.

Let $1 < q < 2$, then $\Un(\tho)$ has minima at $\tho = 0$ and $ \mathbb S^{\polar}/2$ {(which are singular extremals, since $\Un(\tho)$ is not twice differentiable for $\tho=0, \mathbb S^{\polar}/2$)},  and maximum at $\tho =  \mathbb S^{\polar}/4$ (this follows from the mutual disposition of the sphere $\Omega^{\polar}$ and the circles $h_1^2 + h_2^2 = \const$), see Fig.~\ref{fig:U1lpm}.
Thus system~\eq{Ham3} has center  equilibria at $(\tho, h_3) = (0, 0)$ and $(\tho, h_3) = (\mathbb S^{\polar}/2, 0)$, and  saddle equilibrium at $(\tho, h_3) = (\mathbb S^{\polar}/4, 0)$, see Fig.~\ref{fig:phase1lpm}.  The function $\Un(\tho)$ is $C^2$-smooth when $\tho \neq \So n/2$, thus the integral $I$~\eq{I} diverges, and there is no Cauchy nonuniqueness of extremals. {Hence, there are no mixed extremals.} 

\twofiglabels
{U1lpm}{{\small Plot of $\Un(\tho)$: $\ell_p$, $q<2$, Case 1}}{fig:U1lpm}
{U1lpb}{{\small Plot of $\Un(\tho)$: $\ell_p$, $q>2$, Case 1}}{fig:U1lpb}{0.35}{0.47}

Let $2 \leq q < \infty$, then $\Un(\tho)$ has maxima at $\tho = 0$ and $ \mathbb S^{\polar}/2$,  and minimum at $\tho =  \mathbb S^{\polar}/4$, see~\ref{fig:U1lpb}. 
Thus system~\eq{Ham3} has saddle equilibria at $(\tho, h_3) = (0, 0)$ and $(\tho, h_3) = (\mathbb S^{\polar}/2, 0)$, and center equilibrium at $(\tho, h_3) = (\mathbb S^{\polar}/4, 0)$, see Fig.~\ref{fig:phase1lpb}. 

\twofiglabels
{phase1lpm}{Phase portrait of \eq{Ham3}: $\ell_p$, $q<2$, Case 1}{fig:phase1lpm}
{phase1lpb}{Phase portrait of \eq{Ham3}: $\ell_p$, $q>2$, Case 1}{fig:phase1lpb}{0.35}{0.47}

\medskip \textbf{Case 2: $a=0$, $b=1$.}
The function $\Un(\tho)$ has period $\mathbb S^{\polar}$ and is even with respect to $\tho = \mathbb S^{\polar}/2$.  
It has minima at $\tho = 0$ and $ \mathbb S^{\polar}$,  and maximum at $\tho =  \mathbb S^{\polar}/2$, see Figs.~\ref{fig:U2lpm}, \ref{fig:U2lpb}.
Thus system~\eq{Ham3} has center  equilibria at $(\tho, h_3) = (0, 0)$ and $(\tho, h_3) = (\mathbb S^{\polar}, 0)$, and  saddle equilibrium at $(\tho, h_3) = (\mathbb S^{\polar}/2, 0)$, see Figs.~\ref{fig:phase2lpm}, \ref{fig:phase2lpb}.

In the case $1< q < 2$ the function $\Un(\tho)$ is $C^1 \setminus C^2$ at $\tho = \So/2$ {(which is a general singular extremal). There is no special singular extremals.}.
Since 
\be{ascos}
\cos_{\Omega^{\polar}} (\tho) \sim \So/2-\tho, \qquad
\tho \to \So/2,
\ee
then the integral $I$~\eq{I} diverges, and there is no Cauchy nonuniqueness of extremals. {Hence there are no mixed extremals.}

\twofiglabels
{U2lpm}{{\small Plot of $\Un(\tho)$: $\ell_p$, $q<2$, Case 2}}{fig:U2lpm}
{U2lpb}{{\small Plot of $\Un(\tho)$: $\ell_p$, $q>2$, Case 2}}{fig:U2lpb}{0.35}{0.47}

\twofiglabels
{phase2lpm}{Phase portrait of \eq{Ham3}: $\ell_p$, $q<2$, Case 2}{fig:phase2lpm}
{phase2lpb}{Phase portrait of \eq{Ham3}: $\ell_p$, $q>2$, Case 2}{fig:phase2lpb}{0.35}{0.47}

\medskip \textbf{Case 3: $a=1$, $b=1$.}
The function $\Un(\tho) = (\soo^2 \tho-  \coo^2 \tho)/2$ 
has period $\mathbb S^{\polar}$ and is even with respect to $\tho = \mathbb S^{\polar}/2$.  
It has minima at $\tho = 0$ and $ \mathbb S^{\polar}$,  and maximum at $\tho =  \mathbb S^{\polar}/2$, see Figs.~\ref{fig:U3lpm},  \ref{fig:U3lpb}.
Thus system~\eq{Ham3} has center  equilibria at $(\tho, h_3) = (0, 0)$ and $(\tho, h_3) = (\mathbb S^{\polar}, 0)$, and  saddle equilibrium at $(\tho, h_3) = (\mathbb S^{\polar}/2, 0)$ {(which are singular extremals)}, see Figs.~\ref{fig:phase3lpm}, \ref{fig:phase3lpb}. 

In the case $1< q < 2${,} the function $\Un(\tho)$ is $C^1 \setminus C^2$ at $\tho = \So/2$.
The asymptotics~\eq{ascos} and
$$
\sin_{\Omega^{\polar}} (\tho) = 1 -  \frac{1}{q}(\So/2-\tho)^q + O(\So/2-\tho)^{2q}, \qquad
\tho \to \So/2,
$$
imply that the integral $I$~\eq{I} converges, and there is Cauchy nonuniqueness of extremals due to nonsmoothness. {Hence there are mixed extremals.}

\twofiglabels
{U3lpm}{{\small Plot of $\Un(\tho)$: $\ell_p$, $q<2$, Case 3}}{fig:U3lpm}
{U3lpb}{{\small Plot of $\Un(\tho)$: $\ell_p$, $q>2$, Case 3}}{fig:U3lpb}{0.35}{0.47}

\twofiglabels
{phase3lpm}{Phase portrait of \eq{Ham3}: $\ell_p$, $q<2$, Case 3}{fig:phase3lpm}
{phase3lpb}{Phase portrait of \eq{Ham3}: $\ell_p$, $q>2$, Case 3}{fig:phase3lpb}{0.35}{0.47}

\medskip \textbf{Case 4: $a=1$, $b=0$.}
The function $\Un(\tho) = \soo^2 \tho/2$ 
has period $\mathbb S^{\polar}$ and is even with respect to $\tho = \mathbb S^{\polar}/2$.  
It has minima at $\tho = 0$ and $ \mathbb S^{\polar}$,  and maximum at $\tho =  \mathbb S^{\polar}/2$, see Figs.~\ref{fig:U4lpm}, \ref{fig:U4lpb} .
Thus system~\eq{Ham3} has center  equilibria at $(\tho, h_3) = (0, 0)$ and $(\tho, h_3) = (\mathbb S^{\polar}, 0)$, and  saddle equilibrium at $(\tho, h_3) = (\mathbb S^{\polar}/2, 0)$, see Figs.~\ref{fig:phase4lpm}, \ref{fig:phase4lpb}.

Similarly to Case 3, {if $1 < q < 2$, there are singular extremals, Cauchy nonuniqueness, and hence mixed extremals.}

\twofiglabels
{U4lpm}{{\small Plot of $\Un(\tho)$: $\ell_p$, $q<2$, Case 4}}{fig:U4lpm}
{U4lpb}{{\small Plot of $\Un(\tho)$: $\ell_p$, $q>2$, Case 4}}{fig:U4lpb}{0.35}{0.47}

\twofiglabels
{phase4lpm}{Phase portrait of \eq{Ham3}: $\ell_p$, $q<2$, Case 4}{fig:phase4lpm}
{phase4lpb}{Phase portrait of \eq{Ham3}: $\ell_p$, $q>2$, Case 4}{fig:phase4lpb}{0.35}{0.47}

\medskip \textbf{Case 5: $a=1$, $b=-1$.}
The function $\Un(\tho) = (\soo^2 \tho + \coo^2 \tho)/2$ has period $\mathbb S^{\polar}/2$ and is even with respect to $\tho = \mathbb S^{\polar}/4$. 

Let $1 < q < 2$, then $\Un(\tho)$ has maxima at $\tho = 0$ and $ \mathbb S^{\polar}/2$ {(which are singular extremals)},  and minimum at $\tho =  \mathbb S^{\polar}/4$, see~\ref{fig:U5lpm}. 
Thus system~\eq{Ham3} has saddle equilibria at $(\tho, h_3) = (0, 0)$ and $(\tho, h_3) = (\mathbb S^{\polar}/2, 0)$, and center equilibrium at $(\tho, h_3) = (\mathbb S^{\polar}/4, 0)$, see Fig.~\ref{fig:phase5lpm}.

Similarly to Case 3,  {if $1 < q < 2$, there are Cauchy nonuniqueness, and hence mixed extremals.}

\twofiglabels
{U5lpm}{{\small Plot of $\Un(\tho)$: $\ell_p$, $q<2$, Case 5}}{fig:U5lpm}
{U5lpb}{{\small Plot of $\Un(\tho)$: $\ell_p$, $q>2$, Case 5}}{fig:U5lpb}{0.35}{0.47}

Let $2 < q < \infty$, then $\Un(\tho)$ has minima at $\tho = 0$ and $ \mathbb S^{\polar}/2$,  and maximum at $\tho =  \mathbb S^{\polar}/4$, see~\ref{fig:U5lpb}.
Thus system~\eq{Ham3} has center  equilibria at $(\tho, h_3) = (0, 0)$ and $(\tho, h_3) = (\mathbb S^{\polar}/2, 0)$, and  saddle equilibrium at $(\tho, h_3) = (\mathbb S^{\polar}/4, 0)$, see Fig.~\ref{fig:phase5lpb}.  

\twofiglabels
{phase5lpm}{Phase portrait of \eq{Ham3}: $\ell_p$, $q<2$, Case 5}{fig:phase5lpm}
{phase5lpb}{Phase portrait of \eq{Ham3}: $\ell_p$, $q>2$, Case 5}{fig:phase5lpb}{0.35}{0.47}

Summing up, we proved the following statement.

\begin{thm}\label{th:lp}
Let $\Omega = \{ u \in \R^2 \mid \|u\|_p = 1\}$, $p \in (1, + \infty)$. {If $1<p<2$,} then all extremals in problem~\eq{pr21}--\eq{pr23} are bang. {If $p>2$, there are singular extremals.} There  {are} Cauchy nonuniqueness  {and mixed extremals} iff $(\, 2 < p < \infty$ and $a=1\,)$.

\end{thm}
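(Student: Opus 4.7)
The plan is to split the proof along the smoothness threshold $p=2$ and then, inside the nonsmooth regime, along the value of the parameter $a$. Since $\Omega=\{\|u\|_p\le 1\}$ is strictly convex, we have $\Sing_1=\{0\}$, so the maximized Hamiltonian $H$ is $C^1$ everywhere and the only question is how smooth it is along the four coordinate rays of the $(h_1,h_2)$-plane. A direct local parametrization of $\partial\Omega^\polar=\{\|h\|_q\le 1\}$ near $(1,0)$ via $h_1=(1-|h_2|^q)^{1/q}$ shows that $\partial\Omega^\polar$ is $C^\infty$ off the axes and is globally $C^2$ (indeed $W^2_\infty$) iff $q\ge 2$, equivalently $1<p\le 2$.

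For $1<p<2$ one then has $\partial\Omega^\polar\in W^2_\infty$, so Theorem~\ref{th:Lie-strict} applies and gives the first assertion: every extremal is a smooth bang extremal, no singular or mixed arcs occur, and there is no Cauchy nonuniqueness.

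For $p>2$ we have $q\in(1,2)$, and $\Sing_2$ equals the union of the four coordinate rays, while $\Sing_1=\{0\}$ still holds. Thus the maximizer $u=\nabla H$ is single-valued (so no bang-bang switching ambiguity), but $\Un'(\tho)$ fails to be Lipschitz at the corresponding four angles $\tho_*\in\{0,\So/4,\So/2,3\So/4\}$, which are exactly the critical points of $\Un(\tho)=\tfrac{H}{2}(a\sin^2_{\Omega^\polar}\tho-b\cos^2_{\Omega^\polar}\tho)$: at each of them one of $\cos_{\Omega^\polar}\tho_*$, $\sin_{\Omega^\polar}\tho_*$ vanishes together with its $\Omega^\polar$-derivative, so $0\in\mathcal{U}'(\tho_*)$. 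Each such $\tho_*$ is an equilibrium of~\eqref{Ham3} and, by the classification in Section~\ref{sec:singular_lie}, a singular extremal; this establishes the second assertion.

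It remains to characterize when Cauchy nonuniqueness (and hence mixed extremals) appears, which by Theorem~\ref{thm:absense_of_uniqueness_for_general_ct_ode} is equivalent to convergence of the separatrix integral~\eqref{I} at a saddle $\tho_*$. I would compute the local expansions (for instance at $\tho_*=\So/2$)
\[
\cos_{\Omega^\polar}\tho=(\So/2-\tho)+O((\So/2-\tho)^{q+1}),\qquad
\sin_{\Omega^\polar}\tho=1-\tfrac{1}{q}(\So/2-\tho)^{q}+O((\So/2-\tho)^{2q}),
\]
and substitute them into $\Un_*-\Un(\tho)$ in each of the five $(a,b)$-cases. In Cases~3, 4, 5 (where $a=1$) the leading order is $|\tho-\tho_*|^q$, so $I\sim\int d\tho/|\tho-\tho_*|^{q/2}$ is finite iff $q<2$; in Cases~1 and 2 (where $a\in\{-1,0\}$) the $\sin^2_{\Omega^\polar}$-term is differentiable of order $2q>2$ at $\tho_*$ and only the $\cos^2_{\Omega^\polar}$-term contributes, producing a quadratic $(\tho-\tho_*)^2$ behavior and a logarithmically divergent $I$. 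Therefore Cauchy nonuniqueness occurs iff $a=1$ and $q<2$, i.e. $a=1$ and $p>2$. The main obstacle is the asymptotic bookkeeping of the generalized trigonometric functions near the axis directions and ensuring that the stated leading orders really dominate in each $(a,b)$-regime; this amounts to collecting the case-by-case verdicts of Subsection~\ref{subsec:lp} into a single statement.
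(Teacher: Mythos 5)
Your overall strategy coincides with the paper's: for $1<p<2$ one has $q\ge 2$, $\partial\Omega^\polar\in C^2$, and Theorem~\ref{th:Lie-strict} gives bang extremals with no nonuniqueness; for $p>2$ one decides Cauchy nonuniqueness by testing convergence of the integral~\eqref{I} at the saddles of $\Un$ using the local expansions of $\coo$ and $\soo$ near the axis directions, case by case in $(a,b)$. The final answer you reach is the correct one. However, the asymptotic bookkeeping that carries the $p>2$ half of the argument contains concrete errors.

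First, the nonsmooth locus of $\partial\Omega^\polar$ for $q<2$ consists of the four \emph{axis} points, i.e.\ $\tho_*\in\{0,\So/2,\So,3\So/2\}$, not $\{0,\So/4,\So/2,3\So/4\}$: the angles $\So/4$ and $3\So/4$ are the diagonal points, where $\partial\Omega^\polar$ is analytic, neither coordinate vanishes, and the corresponding equilibria (which do occur in Cases 1 and 5) are \emph{not} singular extremals, since $(h_1,h_2)\notin\Sing_2$ there. Second, the leading singular term of $\soo^2\tho$ near $\tho_*=\So/2$ is $-\tfrac{2}{q}(\So/2-\tho)^{q}$, of order $q\in(1,2)$, not $2q$; so in Case 1 ($a=-1$, $b=1$) the $\soo^2$-term is not negligible --- it \emph{dominates} the quadratic $\coo^2$-contribution and makes the axis points local \emph{minima} of $\Un$ (centers, where the constant solution is automatically the unique one). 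If your Case~1 recipe were applied literally at an axis point you would see the $|\tho-\tho_*|^q$ behaviour and wrongly predict a convergent integral; the actual reason Case 1 exhibits no Cauchy nonuniqueness is that its saddle sits at the smooth diagonal point $\So/4$, where $\Un\in C^2$ and $I$ diverges. Your treatment of Cases 3, 4, 5 (axis saddle with $\mathbb{E}-\Un\sim c\,|\tho-\tho_*|^{q}$, hence $I<\infty$ for $q<2$) and of Case 2 (only $\coo^2$ present, quadratic, $I$ logarithmically divergent) is sound and matches the paper. To close the gap you must (i) correct the list of nonsmooth angles, (ii) determine in each of the five cases where the maxima of $\Un$ actually lie (axis versus diagonal) --- this is exactly what the sign of $a$ controls --- and only then (iii) test convergence of $I$ at those maxima.
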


\section{Rolling of a ball on a plane}
\label{sec:rolling_ball}

Another classic problem is about optimal control of a 3D ball, which rolls on a horizontal plane without slipping or twisting. Let us introduce a convenient coordinate system to describe this motion. Denote by $(x,y)\in\R^2$ coordinates of the contact point on the plane. We will use quaternions
\[
	z = z_0 + z_1\ii + z_2\jj + z_3\kk\in\mathbb{H} \cong \R^4,\quad |z|=1,
\]
\noindent to describe rotation of the ball with respect to its initial orientation. The ball is controlled by velocity of its upper point. So we have the following rotation equation:
\[
	\dot z = \frac12 z (\dot y\ii-\dot x\jj).
\]

We assume that the control $(u_1,u_2)=(\dot x,\dot y)$ belongs to a compact convex set $\Omega\subset\R^2$ and $0\in\mathrm{int}\,\Omega$ as usual. So we have the following time-minimizing problem:
\begin{equation}
\label{problem:ball}
		T\to\min,\qquad\dot x=u_1,\quad \dot y=u_2,\quad \dot z = \frac12z(u_2\ii - u_1\jj),\qquad u=(u_1,u_2)\in\Omega
\end{equation}
\noindent with some given end points (for example $x(0)=y(0)=0$, $z(0)=1$ and $x(T)=x_f$, $y(T)=y_f$, $z(T)=z_f$). Thus we are searching for a motion such that the ball moves from the initial position $(0,0)$ to the final one $(x_f,y_f)$, rotates by $z_f$, the velocity of contact point on the plane belongs to $-\Omega$, and this motion takes minimal possible time. The contact point during this motion draws on the plane a curve~$\gamma$. The motion time is equal to length of $\gamma$ in this Finsler quasimetric on the plane with $-\Omega$ as a unit ball.
So our time-minimizing problem can be reformulated as follows. Let us call a path connecting $(0,0)$ and $(x_f,y_f)$ admissible if the contact point moving along the path rotates the sphere by $z_f$. So the problem is to find an admissible path having minimal length in the Finsler quasimetric on the plane.
The case when $\Omega$ is the unit Euclidean disc (i.e.,\ we have the Euclidean metric on the plane) was integrated for the first time in~\cite{Jurdjevic}. 

Let us write down the Pontryagin maximum principle. From the Hamiltonian point of view the situation is very simple. We have a left-invariant Hamiltonian system on group $\mathfrak{G}=\R^2\times SO(3)$, $\dim\mathfrak{G}=5$. The vertical subsystem is a Hamiltonian system on the Lie coalgebra $\mathfrak{g}^*=\R^{2*}\times so(3)^*$ under the corresponding Lie-Poisson bracket. Since the group $\R^2$ is commutative, the rank of the Lie-Poisson bracket on $\mathfrak{g}^*$ at a general point is~$2$. So we have 3 independent Casimirs on $\mathfrak{g}^*$, general symplectic leaves are 2-dimensional, and any Hamiltonian system on $\mathfrak{g}^*$ is integrable in Liouville sense. On the whole cotangent bundle $T^*\mathfrak{G}$ we have non-degenerate left- and right-invariant symplectic structure. Any left-invariant (smooth) Hamiltonian system is again integrable in Liouville sense, since it always has 5 independent first integrals: the left-invariant Hamiltonian $H$ itself, left (which coincide with right) translations of 3 Casimirs, and right-invariant Hamiltonian $H_R$ that is right translation of~$H|_{\mathfrak{g}^*}$ (remind that Hamiltonian coming from PMP is usually nonsmooth).

Despite the written above ideas, we need to write formulas of explicit integration that are simple and convenient to work with. This is also very important, since the Hamiltonian of Pontryagin maximum principle is not smooth, and there can appear singular extremals and Cauchy non-uniqueness phenomenon.

Denote by $(p,q)\in\R^2$ the conjugate variables to $(x,y)$ and by $r\in\mathbb{H}$ the conjugate variable to $z$. The dot product of $z$ and $r$ is $\Re (z\bar r)$, so
\[
	\mathcal{H} = pu_1 + qu_2 - \frac12\Re(z\jj\bar r)u_1 + \frac12\Re(z\ii\bar r)u_2 =
	\big(p-\frac12\Re(z\jj \bar r)\big)u_1 + \big(q + \frac12\Re(z\ii \bar r)\big)u_2.
\]

The procedure of integration stays the same as in the previous sections. To find formulas for solutions we write maximum of $\mathcal{H}$ in $u\in\Omega$ in terms of the support function $s_\Omega$ of the set~$\Omega$ as usual:
\[
	H=\max_{u\in\Omega}\mathcal{H} = 
	s_\Omega(p-\frac12\Re(z\jj\bar r),q + \frac12\Re(z\ii\bar r)).
\]
\noindent  We denote the arguments of $s_\Omega$ by $h_1=p-\frac12\Re(z\jj\bar r)$ and $h_2=q + \frac12\Re(z\ii\bar r)$. Direct substitution of $\dot p = -\mathcal{H}_x = 0$, $\dot q = -\mathcal{H}_y=0$ and $\dot r = -\mathcal{H}_z = \frac12r(\ii u_2-\jj u_1)$ gives
\[
	\dot h_1 = -h_3 u_2,\quad \dot h_2 = h_3 u_1
\]
\noindent where $h_3 = -\frac12\Re (z\kk\bar r)$. Second differentiation gives
\[
	\dot h_3 = -\frac12\Re(\ii z\bar r) u_1 - \frac12\Re(\jj z\bar r)u_2.
\]

The right-hand side here is equal to $(q-h_2)u_1 + (h_1-p)u_2$. Consequently, equations on $h_1$, $h_2$ and $h_3$ do not depend on the other variables and form (together with $\dot p=\dot q=0$) the vertical subsystem:
\begin{equation}
\label{eq:roling_ball_vertical_subsystem}
	\dot h_1 = -h_3u_2,\quad
	\dot h_2 = h_3 u_1,\quad
	\dot h_3 = (q-h_2)u_1 + (h_1-p)u_2.
\end{equation}

The case $H=0$ leads to abnormal extremals: if $h_1=h_2\equiv0$, then $h_3\equiv0$, $(u_1,u_2)\parallel(p,q)$, and $(u_1,u_2)\in\partial\Omega$, which are straight lines in the plane $(x,y)$ independently on $\Omega$. 

Let us integrate system~\eqref{eq:roling_ball_vertical_subsystem} and find formulas for solutions for the case $H>0$ by using convex trigonometry. Again, since $H=s_{\Omega}(h_1,h_2)=\const$, the point $(h_1,h_2)$ moves along the boundary of the polar set $\Omega^\polar$ stretched by $H$ times:
\[
(h_1,h_2)\in H\partial\Omega^\polar.
\]
\noindent Put
\[
h_1 = H\cos_{\Omega^\polar}\theta^\polar\qquad\mbox{and}\qquad h_2=H\sin_{\Omega^\polar}\theta^\polar.
\]
\noindent If $(u_1,u_2)$ is an optimal control, then $h_1u_1+h_2u_2=H$ and $u\in\partial\Omega$. So again by the generalized Pythagorean identity (see Subsec.~\ref{subsec:CT_properties}) we get
\[
u_1=\cos_\Omega\theta\qquad\mbox{and}\qquad u_2=\sin_\Omega\theta
\]
\noindent for an angle $\theta\leftrightarrow\theta^\polar$. According to the inverse polar change of coordinates (see Subsec.~\ref{subsec:CT_properties}), for a.e.\ $t$, we have
\begin{equation}
\label{ball:dot_theta_polar}
	\dot\theta^\polar = \frac{h_1\dot h_2 - h_2\dot h_1}{H^2} = 
	\frac{h_3(h_1u_1+h_2u_2)}{H^2} = \frac{h_3}{H}.
\end{equation}
\noindent Also from~\eqref{eq:roling_ball_vertical_subsystem} we know that
\begin{equation}
\label{ball:dot_h_3}
	\dot h_3 = (q-H\sin_{\Omega^\polar}\theta^\polar)\cos_\Omega\theta + 
	(H\cos_{\Omega^\polar}\theta^\polar-p)\sin_\Omega\theta.
\end{equation}

It is easy to find a first integral for the last two equations using Theorem~\ref{thm:general_energy_integral}. Let us show the direct way to do it (which leads to the same result). If we multiply the second equation by $\dot{\theta^\polar}$, then, using the differentiation formulae for $\cos_\Omega$ and $\sin_\Omega$ (see Subsec.~\ref{subsec:CT_properties}), we get
\[
	H\ddot{\theta^\polar}\dot{\theta^\polar} = \dot h_3 \dot{\theta^\polar}=  \frac{d}{dt}\Big[
		p\cos_{\Omega^\polar}\theta^\polar + q\sin_{\Omega^\polar}\theta^\polar -
		\frac12 H (\cos_{\Omega^\polar}^2\theta^\polar + \sin_{\Omega^\polar}^2\theta^\polar)
	\Big].
\]
\noindent Thus the first integral has the following form (recall that $p$, $q$, and $H$ are constants):
\begin{equation}
\label{eq:ball_energy}
	\mathbb{E} = \frac12H\big(
		\dot{\theta^\polar}^2 +
		\cos_{\Omega^\polar}^2\theta^\polar + \sin_{\Omega^\polar}^2\theta^\polar
	\big)
	- p\cos_{\Omega^\polar}\theta^\polar - q\sin_{\Omega^\polar}\theta^\polar = 
	\frac{1}{2H}h_3^2 + H\mathcal{U}(\theta^\polar).
\end{equation}
\noindent Let us again emphasize that the control $u=(u_1,u_2)$ can easily be found from $\theta^\polar$ by the relation $\theta\leftrightarrow\theta^\polar$.

If $\Omega$ is the unit disc, then equation~\eqref{eq:ball_energy} becomes the classical pendulum equation. 
In general case, equation~\eqref{eq:ball_energy} can be written in terms of kinetic and potential energy $\mathbb{E}/H=\frac12\dot{\theta^\polar}^2 + \mathcal{U}(\theta^\polar)$, where $\mathcal{U}$ coincides up to the constant $\frac{p^2+q^2}{H^2}$ with the squared distance from the point $(\frac{p}{H},\frac{q}{H})$ to the point $(\cos_{\Omega^\polar}\theta^\polar,\sin_{\Omega^\polar}\theta^\polar)$ on the boundary $\partial \Omega^\polar$. So the topological structure of the phase portrait of equation~\eqref{eq:ball_energy} is determined by minima and maxima points on $\partial\Omega^\polar$ of the distance to the point $(\frac{p}{H},\frac{q}{H})$. At any local minimum we have a fixed point of the center type, and at any local maximum we have a fixed point of the saddle type.

\begin{thm}
	If problem~\eqref{problem:ball} is sub-Riemannian (i.e.,\ the set $\Omega$ is an ellipse centered at the origin), then the vertical subsystem~\eqref{eq:roling_ball_vertical_subsystem} for the normal case $H>0$ is integrated in elliptic functions.
\end{thm}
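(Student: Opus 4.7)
\medskip

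The plan is to exploit the fact that, for a centered ellipse $\Omega$, both $\Omega$ and $\Omega^\polar$ admit global elementary parametrizations, and to reduce the energy first integral~\eqref{eq:ball_energy} to a classical elliptic quadrature.

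First, normalize $\Omega=\{(x/a)^2+(y/b)^2\le 1\}$. A direct computation gives $s_\Omega(p,q)=\sqrt{a^2p^2+b^2q^2}$, hence $\Omega^\polar=\{(p/A)^2+(q/B)^2\le 1\}$ with $A=1/a$, $B=1/b$. Parametrize $\partial\Omega^\polar$ by $(A\cos s,B\sin s)$. By item~4 of Subsec.~\ref{subsec:explicit_sets}, applied to $\Omega^\polar$ with vanishing centre coordinates, the corresponding angle satisfies $\theta^\polar=ABs$ (up to an inessential constant), so that
\begin{equation*}
\cos_{\Omega^\polar}\theta^\polar=A\cos s,\qquad
\sin_{\Omega^\polar}\theta^\polar=B\sin s,\qquad
\dot{\theta^\polar}=AB\,\dot s.
\end{equation*}

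Substituting these identities into~\eqref{eq:ball_energy}, and recalling that $p,q,H,\mathbb{E}$ are constants along an extremal, I obtain
\begin{equation*}
\frac{HA^2B^2}{2}\,\dot s^2
=\mathbb{E}-\frac{H}{2}\bigl(A^2\cos^2 s+B^2\sin^2 s\bigr)+pA\cos s+qB\sin s,
\end{equation*}
which is an equation of the form $\dot s^2=R(\cos s,\sin s)$ with $R$ a polynomial of degree at most~$2$. The Weierstrass substitution $u=\tan(s/2)$ turns $\cos s,\sin s$ into rational functions of $u$ with denominator $1+u^2$ and transforms $ds=2\,du/(1+u^2)$, so after clearing denominators the ODE becomes
\begin{equation*}
4\,\dot u^2=P_4(u),
\end{equation*}
where $P_4$ is a polynomial in $u$ of degree at most~$4$ whose coefficients are explicit affine functions of $\mathbb{E}/H$, $p/H$, $q/H$, $A$ and $B$. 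Separation of variables yields the classical elliptic quadrature $t=\int 2\,du/\sqrt{P_4(u)}$, whose inversion expresses $u(t)$ through Jacobi elliptic functions. Then $s(t)=2\arctan u(t)$, $\theta^\polar(t)=ABs(t)$, and the vertical variables of~\eqref{eq:roling_ball_vertical_subsystem} follow: $(h_1,h_2)=H(A\cos s,B\sin s)$ and $h_3=H\dot{\theta^\polar}$ by~\eqref{ball:dot_theta_polar}.

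The only mildly delicate step is the treatment of the degenerate regimes in which $P_4$ loses degree or acquires multiple roots: these correspond exactly to the equilibria and separatrices of the phase portrait described after~\eqref{eq:ball_energy}, where the quadrature specializes to elementary (rational, trigonometric, or hyperbolic) expressions. In every regime, however, $h_1,h_2,h_3$ belong to the class of (possibly degenerate) elliptic functions, establishing the claim.
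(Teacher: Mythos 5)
Your proposal is correct and follows essentially the same route as the paper: both parametrize $\partial\Omega^\polar$ by the classical angle $s$, use the relation $s=ab\,\theta^\polar$ from Subsec.~\ref{subsec:explicit_sets} to convert the energy integral~\eqref{eq:ball_energy} into an equation $\dot s^2=R(\cos s,\sin s)$ with $R$ quadratic, and conclude integrability in elliptic functions. The only difference is that you carry out explicitly the final ``standard'' step (the Weierstrass substitution reducing the quadrature to $\int du/\sqrt{P_4(u)}$) and the degenerate cases, which the paper deliberately leaves implicit.
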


\begin{proof}
	Let $a$ and $b$ be semi-axes of $\Omega$. The polar set $\Omega^\polar$ is an ellipse too: $\partial\Omega^\polar=\{(\frac1a\cos s,\frac1b\sin s)\}$. Using formulae obtained in Subsec.~\ref{subsec:explicit_sets}, we get $s=ab\,\theta^\polar$, $\cos_{\Omega^\polar}\theta^\polar=\frac1a\cos s$ and $\sin_{\Omega^\polar}\theta^\polar=\frac1b\sin s$. Hence, equation \eqref{eq:ball_energy} leads to the following ODE on the parameter~$s$:
	\[
		\dot s = ab\,\dot{\theta^\polar}=\pm\sqrt{\tilde{\mathbb{E}} - b^2(\cos s-\tilde p)^2 - a^2(\sin s-\tilde q)^2}.
	\]
	\noindent Here $\tilde{\mathbb{E}}=\frac{a^2b^2}{H}(2\mathbb{E}H+p^2+q^2)$, $\tilde p=\frac{ap}{H}$, and $\tilde q=\frac{bq}{H}$ are some constants. This equation can be integrated explicitly by elliptic functions in a standard way (the explicit form is of no importance here).
\end{proof}

Moreover, in the sub-Riemannian case, we immediately find $u_1=\cos_\Omega\theta=\frac{d}{d\theta^\polar}\sin_{\Omega^\polar}\theta^\polar = \frac{ds}{d\theta^\polar}\frac{d}{ds}\big(\frac1b\sin s\big) = a\cos s$ and $u_2=\sin_\Omega\theta=-\frac{d}{d\theta^\polar}\cos_{\Omega^\polar}\theta^\polar = -\frac{ds}{d\theta^\polar}\frac{d}{ds}\big(\frac1a\cos s\big) = b\sin s$.

If $\Omega^\polar$ has $W^2_\infty$ boundary (in this case $\Omega$ is necessarily strictly convex), then there is a solution uniqueness by Proposition~\ref{prop:uniqueness_of_solution_when_Omega_smooth}. Let us now consider the case when $\Omega$ is a polygon.

\begin{thm}
\label{thm:sphere_polygon}
	Suppose that $\Omega$ is a polygon. Then there are two types of normal extremals:
	\begin{enumerate}
		\item[$(1)$] If a control $\hat u$ moves arbitrarily along one fixed edge $e$ in $\Omega$, then the obtained extremal is optimal. In this case, the point $(\frac pH,\frac qH)$ coincides with the vertex of $\Omega^\polar$ corresponding to the edge $e$.
		
		\item[$(2)$] In another case, the control $\hat u$ is bang-bang (i.e.,\ piecewise constant). The set of its possible values is finite\footnote{Only values on a set of positive measure are considered.}: a control $\hat u=(\cos_\Omega\hat\theta,\sin_\Omega\hat \theta)$ is either a vertex of $\Omega$ or a point on an edge $e$ of $\Omega$, given by the condition
		\begin{equation}
		\label{ball_singular_control}
			(\cos_\Omega\hat\theta,\sin_\Omega\hat\theta) \parallel
			\left(\cos_{\Omega^\polar}\theta^\polar_0-\frac{p}{H},\sin_{\Omega^\polar}\theta^\polar_0-\frac{q}{H}\right)
		\end{equation}
		\noindent where $\theta^\polar_0$ determines the vertex of $\Omega^\polar$ corresponding to edge $e$. The last case is allowed by PMP if and only if $\mathbb{E}=H\,\mathcal{U}(\theta^\polar_0)$.
	\end{enumerate}
\end{thm}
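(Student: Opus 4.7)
The strategy is to recognise that equations \eqref{ball:dot_theta_polar}--\eqref{eq:ball_energy} form an instance of the convex-trigonometric second-order ODE \eqref{eq:general_ct_ode} of Section~\ref{sec:second-order}, with
\[
    f(p',q') = \tfrac12 (p')^2 + \tfrac12 (q')^2 - \tfrac{p}{H} p' - \tfrac{q}{H} q',
    \qquad
    df(Q_0) = Q_0 - \tfrac{1}{H}(p,q),
\]
where $Q_0=(\cos_{\Omega^\polar}\theta^\polar_0,\sin_{\Omega^\polar}\theta^\polar_0)$. Geometrically, $\mathcal{U}(\theta^\polar)$ is, up to an additive constant, half the squared distance from $(p/H,q/H)$ to the running point $Q_{\theta^\polar}\in\partial\Omega^\polar$. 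The classification of extremals for polygonal $\Omega$ then follows by combining the structural results of Section~\ref{sec:second-order} with the piecewise-linear structure of $\cos_{\Omega^\polar}$ and $\sin_{\Omega^\polar}$.

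Since $\Omega$ is a polygon, so is $\Omega^\polar$, and vertices of $\Omega^\polar$ correspond to edges of $\Omega$ under the duality $\theta\leftrightarrow\theta^\polar$. For any $\theta^\polar$ strictly inside a polar edge there is a unique $\theta\leftrightarrow\theta^\polar$, namely the angle of the vertex of $\Omega$ dual to that edge. Hence on every open time-interval during which $\theta^\polar(t)$ sweeps the interior of a single polar edge, $\hat u(t)$ is forced to be that constant vertex of $\Omega$, yielding the vertex-valued pieces of case~(2).

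At a moment $t_0$ when $\theta^\polar(t_0)=\theta^\polar_0$ is a vertex of $\Omega^\polar$, the energy integral \eqref{eq:ball_energy} makes $\dot{\theta^\polar}(t_0)=0$ equivalent to $\mathbb{E}=H\,\mathcal{U}(\theta^\polar_0)$. If $\dot{\theta^\polar}(t_0)\neq 0$, Proposition~\ref{prop:unique_solution_if_dot_theta_ne_zero} gives a unique non-stationary continuation and the control simply jumps between two adjacent vertices of $\Omega$; if $\dot{\theta^\polar}(t_0)=0$ but $0\notin\mathcal{U}'(\theta^\polar_0)$, Proposition~\ref{prop:uniqueness_if_zero_not_in_U_derivative} again forces a unique non-stationary continuation. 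A stationary arc $\theta^\polar\equiv\theta^\polar_0$ therefore requires $0\in\mathcal{U}'(\theta^\polar_0)$, and the dichotomy of Subsection~\ref{subsec:theta_in_general_ct_ode} splits into two subcases: the \emph{special singular} subcase $df(Q_0)=0$, equivalent to $(p/H,q/H)=Q_0$, in which every measurable $\hat u(t)$ taking values in the dual edge $e$ of $\Omega$ is admissible --- this is case~(1); and the \emph{general singular} subcase $df(Q_0)\neq 0$ with $0\in\mathcal{U}'(\theta^\polar_0)$, in which the unique admissible $\hat u$ is characterised by $\hat u\parallel df(Q_0)=Q_0-(p/H,q/H)$, giving exactly \eqref{ball_singular_control}.

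The main technical obstacle is to check that the general-singular control produced by the linear system in Subsection~\ref{subsec:theta_in_general_ct_ode} lies inside the closed dual edge $e$ of $\Omega$ rather than on its affine extension; this is precisely the geometric content of the condition $0\in\mathcal{U}'(\theta^\polar_0)$, which says that $df(Q_0)$ belongs to the normal cone of $\Omega^\polar$ at the vertex $Q_0$. Once this is settled, finiteness of the range of $\hat u$ in case~(2) follows from the fact that on each polar edge the reduced system is analytic with only isolated zeros of $\dot{\theta^\polar}$, ruling out Fuller-type accumulation of switchings. The word ``optimal'' in case~(1) is to be read in the sense that every such measurable control does define a genuine normal Pontryagin extremal, hence a candidate minimiser for the boundary conditions it realises.
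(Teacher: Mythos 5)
Your treatment of case $(2)$ follows essentially the same route as the paper: you reduce the vertical subsystem to the differential inclusion of Section~\ref{sec:second-order} with $f$ equal (up to an additive constant) to half the squared distance from $(p/H,q/H)$ to the running point of $\partial\Omega^\polar$, you invoke Propositions~\ref{prop:unique_solution_if_dot_theta_ne_zero} and~\ref{prop:uniqueness_if_zero_not_in_U_derivative} and Theorem~\ref{thm:absense_of_uniqueness_for_general_ct_ode} to force piecewise-constant (vertex-valued) controls away from stationary arcs, and you read off the general-singular condition $\hat u\parallel df(Q_0)=Q_0-(p/H,q/H)$, which is exactly~\eqref{ball_singular_control}, together with the identification $df(Q_0)=0\Leftrightarrow(p/H,q/H)=Q_0$ for the special-singular case. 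This all matches the paper's argument, including the observation that $0\in\mathcal{U}'(\theta^\polar_0)$ is what places the singular control on $\partial\Omega$ rather than on the affine extension of the edge.

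There is, however, a genuine gap in case $(1)$. The theorem asserts that an extremal whose control moves arbitrarily along one fixed edge $e$ of $\Omega$ is \emph{optimal}, and your last sentence explicitly reinterprets ``optimal'' as ``defines a genuine normal Pontryagin extremal, hence a candidate minimiser'' --- that is, you prove extremality but not optimality, which weakens the stated claim. The paper closes this with a short reachability argument that your proposal is missing: if $\hat u(t)$ lies on a single edge $e$ of $\Omega$ for a.e.~$t$, then the planar endpoint $(x(T),y(T))$ lies on the boundary of the time-$T$ attainable set of the projected system $(\dot x,\dot y)\in\Omega$ (the edge is an exposed face of $\Omega$, so the trajectory maximises a fixed linear functional of $(x,y)$ at every time). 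Hence that planar endpoint cannot be reached in shorter time by \emph{any} admissible control, and a fortiori the full boundary-value problem including the rotation $z(T)=z_f$ cannot be solved faster; the trajectory is therefore time-optimal, not merely extremal. Without this observation the assertion of case $(1)$ is not established.
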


\begin{proof}
	The first part is easy to prove. Indeed, if $\hat u(t)$ belongs to a fixed edge $e$ of $\Omega$ for a.e.\ $t$, then $(x(T),y(T))$ belongs to the boundary of the reachable set (in time $T$) for the system $(\dot x,\dot y)\in \Omega$. So there is no way to reach this point faster, and hence the trajectory is optimal.
	
	The second part follows from Propositions~\ref{prop:uniqueness_of_solution_when_Omega_smooth}, \ref{prop:uniqueness_if_zero_not_in_U_derivative} and Theorem~\ref{thm:absense_of_uniqueness_for_general_ct_ode}. Indeed, if an extremal has no singular parts, then the control is piecewise constant. Singular parts are described in Section~\ref{subsec:theta_in_general_ct_ode}. General singular parts are described in item~$\eqref{ct_ode_general_singular}$ in Section~\ref{subsec:theta_in_general_ct_ode}, where it is proved that, in this case, solution must stay at a corner, and singular control is uniquely determined by the condition $df\parallel (\cos_\Omega\theta,\sin_\Omega\theta)$ where $f=\frac{1}{2}(x-\frac{p}{H})^2 + \frac12(y-\frac{q}{H})^2$ for our problem. Special singular controls are described in item~$\eqref{ct_ode_special_singular}$ in Section~\ref{subsec:theta_in_general_ct_ode} and may appear only if $df=0$ at a corner of $\Omega^\polar$ (see Section~\ref{subsec:theta_in_general_ct_ode}). The function $f$ has null derivative $df(x,y)=0$ only at its global minimum $(x,y)=(\frac{p}{H},\frac{q}{H})$. Thus if $\dot\theta^\polar(0)=0$, the point  $Q=(\cos_{\Omega^\polar}\theta^\polar(0),\sin_{\Omega^\polar}\theta^\polar(0))$ is a corner of $\Omega^\polar$, and $Q=(\frac{p}{H},\frac{q}{H})$, then there exists a unique solution $\theta^\polar(t)\equiv\const$, which determines special singular extremals described in the first part.
\end{proof}

Applying Remark~\ref{remark:two_different_singular_controls} to the current setting, we obtain that a control $\hat u=(\cos_\Omega\hat\theta,\sin_\Omega\hat \theta)\in e$ satisfying~\eqref{ball_singular_control} is a general singular control (on the edge~$e$). If a singular control moves arbitrary along an edge $e$, then it is a special singular control (on the edge $e$).

\begin{remark}
	On any interval with constant control, the functions $x(t)$ and $y(t)$ are linear. Moreover, if $\Omega$ is a polygon, then it is easy to determine the length of constancy interval. If a control is singular, then the length is arbitrary. In the other case, the point $Q_{\theta^\polar}$ moves along an edge of $\Omega^\polar$. Since the function $\mathcal{U}$ is quadratic on any edge, the equation $\mathbb{E}/H=\frac12\dot{\theta^\polar}^2+\mathcal{U}(\theta^\polar)$ has the form
	\[
		\dot{\theta^\polar} = \pm\sqrt{a{\theta^\polar}^2 + b\theta^\polar + c} 
	\]
	\noindent where $a$, $b$, and $c$ are some constants. Moreover, $a=-(\cos_\Omega^2 \theta+\sin_\Omega^2 \theta)<0$ for $\theta\leftrightarrow\theta^\polar$, $\theta=\const$. Consequently, this equation can be easily solved in classical trigonometric functions. Hence, the sequence of control jumps can be easily derived from the phase portrait of the equation~\eqref{eq:ball_energy}, which can be constructed similarly to Sec.~\ref{sec:3DLie}.
\end{remark}

\section{Yachts}\label{sec:yachts}

\begin{defn}
Consider an optimal control problem whose differential system is defined by the classic trigonometric functions $\cos$, $\sin$.
An \textit{$\Omega$-modification} of such a problem is the same problem with
$\cos$ changed to $\cos_{\Omega}$ and 
$\sin$ to $\sin_{\Omega}$, where a convex compact set~$\Omega$ with $0\in\mathrm{int}\,\Omega$ defines the modification of the differential system. 
\end{defn}

Let us state the following control system:
\begin{align}
		\dot q &= u_1 X_1 + u_2 X_2, \qquad q = (x,y,\theta)\in \R_{x,y}^2 \times \big(\R_{\theta}/(2\mathbb{S}\Z)\big), \label{sys-se2}
\end{align}
with modified vector fields 
\begin{align}
		X_1 &= (\cos_\Omega \theta, \sin_\Omega \theta, 0), \qquad X_2 = (0,0,1),
\end{align}
we fix boundary conditions and integral cost functional in general form
\begin{align}
		q(0) = q_0, \qquad q(T) = q_1, \label{boundary-se2} \\
J = \int_0^T f(u_1,u_2) dt \to \min. \label{Jmin}
\end{align}
Also, we assume a possible restriction on the control 
\begin{align}
(u_1,u_2) \in U \subset \R^2. \label{restr}
\end{align}

\begin{remark}
One may consider non-factorized domain for the angle parameter $\theta \in \R_{\theta}$ which does not change extremal controls. However, in the~corresponding problem, optimality question can be investigated easier than in the original for some cases. Below, we will refer to such a problem as ``non-factorized problem'' while concerning optimality of obtained extremal controls.
\end{remark}

$\Omega$-modifications of the following four classic optimal control problems appear as specifications of problem~(\ref{sys-se2})--(\ref{restr}):
\begin{enumerate}
\item Euler's elastic problem~\cite{euler} with 
\begin{align}
U = U_{E} = \{(u_1, u_2)\in \R^2 \mid u_1 = 1\}, \qquad f = f_E = \frac{u_2^2}{2}. \label{UfE}
\end{align}
\item Markov-Dubins car~\cite{markov, dubins} with 
\begin{align}
U = U_{MD}=\{(u_1, u_2) \in \R^2 \mid u_1 = 1, |u_2| \leq 1\}. \qquad f = 1, \label{UfMD}
\end{align}
\item Reeds-Shepp car~\cite{reeds-shepp} with 
\begin{align}
U = U_{RS}=\{(u_1, u_2) \in \R^2 \mid |u_1| \leq 1, |u_2| \leq 1\}, \qquad f = 1. \label{UfRS}
\end{align}
Notice, that originally Reeds and Shepp used condition $|u_1| = 1$ instead of $|u_1| \leq 1$, but the corresponding problems are equivalent in the sense of 
convexity of the set of possible velocities~\cite{notes}.
\item sub-Riemannian problem on the group of Euclidean motions of $2$-dimensional plane~\cite{cut_sre2} with 
\begin{align}
U = \R^2, \qquad f = f_{SR} =  \frac{u_1^2 + u_2^2}{2} \label{UfSR1}
\end{align}
or equivalently with 
\begin{align}
U = U_{SR} = \{(u_1, u_2) \in \R^2 \mid u_1^2 + u_2^2 \leq 1\}, \qquad f = 1. \label{UfSR2}
\end{align}
\end{enumerate}

All four classic problems express control models for a car-like robot moving on a horizontal plane. $\Omega$-modifications of those problems can be understood as control models for a yacht in a sea (or car moving on a non-horizontal plane). For the first and the second models with $u_1=1$ a domain $\Omega$ gives a description of velocity vector at $(x,y)$ as $(\cos_\Omega \theta,\sin_\Omega \theta)$. Explicit form of $\Omega$ can be obtained from external disturbances such as wind or water stream (or angle of inclined plane), so $\Omega$ is a disc for no external disturbances. For the third and the forth models we should assume that $\Omega$ is symmetric, i.e., $\Omega = -\Omega$ and yacht can move forward and backward with the same speed. For non-symmetric case with $\Omega \neq -\Omega$ in the third and the forth problems, the physical interpretation fails, however, the mathematical problem follows the same solution presented below. 

\begin{remark}
It is possible to set $q_0 = (0,0,0)$ for all four classic problems. However, in our $\Omega$-modification case we assume $q_0 = (0,0, \theta_0)$ with $\theta_0 \in \R_{\theta}/(2\mathbb{S}\Z)$ (even for ``non-factorized problem'') to emphasize the importance of initial direction $\theta_0$ (one may fix it vanishing through rotating of the domain $\Omega$ by the corresponding angle $\theta_{\Omega}^{-1} (\theta_0)$, see Proposition~\ref{prop:angle-sum}). 
\end{remark}

Let us apply PMP to the general problem~(\ref{sys-se2})--(\ref{restr}) and then proceed with each of its specifications separately. We have the following Hamiltonian function of PMP:
\[
	\mathcal{H} = \psi_0 f(u_1,u_2) + (\psi_1 \cos_{\Omega} \theta + \psi_2 \sin_{\Omega} \theta) u_1 + \psi_3 u_2,
\]
where $\psi = (\psi_0, \psi_1, \psi_2, \psi_3)$ is a vector of adjoint variables, here $\psi_0 \leq 0$ is a constant.

The vertical subsystem of the Hamiltonian system takes the form:
\begin{equation}\label{vert-yacht}
\begin{cases}
\dot{\psi}_1 = \dot{\psi}_2 = 0,\\
\dot{\psi}_3 = (\psi_1 \sin_{\Omega^\polar} \theta^\polar - \psi_2 \cos_{\Omega^\polar} \theta^\polar) u_1.
\end{cases}
\end{equation}  

The maximality condition depends on the domain $U$ and the function $f(u_1,u_2)$ which defines cost functional~(\ref{Jmin}).  
The case $\psi_1^2 + \psi_2^2 = 0$ is considered for each problem separately further in the subsections.
Suppose $\psi_1^2 + \psi_2^2 \neq 0$. We assume $\psi_1^2 + \psi_2^2 = 1$ without loss of generality. 

\begin{prop}[see~\cite{CT1}]
\label{prop:angle-sum}
    Let $e^{\ii \alpha}$ denote the clockwise rotation of $\R^2$ by the angle $\alpha \in S^1$ around the origin. Then
    \begin{align*}
        \cos_\Omega\theta \cos\alpha  +\sin_\Omega\theta\sin\alpha  &= \cos_{e^{\ii\alpha }\Omega}(\theta-\theta_\Omega(\alpha)),\\
        \sin_\Omega\theta\cos\alpha -\cos_\Omega\theta \sin\alpha &= \sin_{e^{\ii\alpha }\Omega}(\theta-\theta_\Omega(\alpha)),
    \end{align*}
where the function $\theta_{\Omega}: S^1 \to \R/(2 \mathbb{S}\Z)$ sets the correspondence between the classic and generalized angles.
\end{prop}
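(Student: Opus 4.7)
The plan is to read off both sides geometrically and use rotation-invariance of area. Observe first that the two left-hand sides are exactly the coordinates of the image $R_\alpha P_\theta$ of the boundary point $P_\theta=(\cos_\Omega\theta,\sin_\Omega\theta)\in\partial\Omega$ under the clockwise rotation $R_\alpha=e^{\ii\alpha}$, whose matrix is $\bigl(\begin{smallmatrix}\cos\alpha & \sin\alpha\\ -\sin\alpha & \cos\alpha\end{smallmatrix}\bigr)$. Since $R_\alpha\partial\Omega=\partial(e^{\ii\alpha}\Omega)$, the point $R_\alpha P_\theta$ must equal $(\cos_{e^{\ii\alpha}\Omega}\phi,\sin_{e^{\ii\alpha}\Omega}\phi)$ for some generalized angle $\phi$ with respect to the rotated set, and the whole claim reduces to identifying $\phi=\theta-\theta_\Omega(\alpha)$.

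To compute $\phi$, recall that by definition $\phi/2$ is the area of the sector of $e^{\ii\alpha}\Omega$ bounded by the ray $Ox$ and the ray through $R_\alpha P_\theta$. Pulling this sector back by $R_\alpha^{-1}=R_{-\alpha}$ preserves area, so it equals the area of the sector of $\Omega$ bounded by $R_{-\alpha}Ox$ (the ray at classic angle $\alpha$ counterclockwise from $Ox$) and the ray through $P_\theta$. Split the latter as the signed difference of two $\Omega$-sectors based at $Ox$: the sector up to the ray through $P_\theta$ has area $\theta/2$ by the very definition of $\cos_\Omega,\sin_\Omega$, and the sector up to the ray $R_{-\alpha}Ox$ has area $\theta_\Omega(\alpha)/2$ by the very definition of $\theta_\Omega$ as the classic-to-generalized angle correspondence. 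Hence $\phi=\theta-\theta_\Omega(\alpha)$, which yields both formulas at once after matching coordinates of $R_\alpha P_\theta$.

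The main subtleties are bookkeeping rather than analysis: one must pin down the sign conventions (the paper's $e^{\ii\alpha}$ is \emph{clockwise}, so $R_{-\alpha}$ sends $Ox$ to the ray at classic angle $+\alpha$ counterclockwise, consistent with $\theta_\Omega$ being monotone increasing with $\theta_\Omega(0)=0$) and verify well-posedness of $\phi$ modulo $2\mathbb{S}(e^{\ii\alpha}\Omega)$; this is automatic since rotations preserve area and therefore $\mathbb{S}(e^{\ii\alpha}\Omega)=\mathbb{S}(\Omega)=\mathbb{S}$, so both identities extend by $2\mathbb{S}$-periodicity from $\theta\in[0,2\mathbb{S})$ to all $\theta\in\R$.
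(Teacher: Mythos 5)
Your argument is correct. Note that the paper itself gives no proof of this proposition -- it is stated with a pointer to the reference \cite{CT1} -- so there is no in-text argument to compare against; what you have written is a valid self-contained substitute. The two key observations are exactly the right ones: the left-hand sides are the coordinates of the clockwise rotation $e^{\ii\alpha}$ applied to $P_\theta=(\cos_\Omega\theta,\sin_\Omega\theta)$, and the generalized angle of the rotated point with respect to the rotated set is computed by pulling the defining sector back under the inverse (area-preserving) rotation and splitting it as a signed difference of two sectors of $\Omega$ based at $Ox$, giving $\phi=\theta-\theta_\Omega(\alpha)$. Your sign bookkeeping checks out: with the clockwise convention for $e^{\ii\alpha}$, the inverse rotation sends $Ox$ to the ray at classic angle $+\alpha$, whose generalized angle is $\theta_\Omega(\alpha)$ by definition, and since rotations preserve area one has $\mathbb{S}(e^{\ii\alpha}\Omega)=\mathbb{S}$, so the identity is well posed modulo $2\mathbb{S}$ and extends to all $\theta\in\R$ by periodicity.
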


Define an angle $\alpha = \const \in S^1$ satisfying $\psi_1 = \cos \alpha, \psi_2 = \sin \alpha$. 
Using Proposition~\ref{prop:angle-sum} we rewrite vertical subsystem~(\ref{vert-yacht}) as follows:
\begin{equation}\label{vert-yacht2}
\dot{\psi}_3 = (\cos \alpha \sin_{\Omega^\polar} \theta^\polar - \sin \alpha \cos_{\Omega^\polar} \theta^\polar) u_1 = \sin_{e^{i \alpha} \Omega^\polar} (\theta^\polar - \theta_{\Omega^\polar}(\alpha)) u_1. 
\end{equation}
With $\tilde{\alpha}^\polar = \theta_{\Omega^\polar}(\alpha), \tilde{\Omega}^\polar = e^{i \alpha} \Omega^\polar, \tilde{\theta}^\polar = \theta^\polar - \tilde{\alpha}^\polar$ we have
\begin{equation}\label{vert-md3}
\dot{\psi}_3 = u_1 \sin_{\tilde{\Omega}^\polar} \tilde{\theta}^\polar. 
\end{equation}

Moreover, with $\tilde{\alpha} = \theta_{\Omega}(\alpha), \tilde{\Omega} = e^{i \alpha} \Omega, \tilde{\theta} = \theta - \tilde{\alpha}$, the Hamiltonian function of PMP takes the following form:
\begin{align}
	\mathcal{H} = \psi_0 f(u_1,u_2) + u_1 \cos_{\tilde{\Omega}} \tilde{\theta} + u_2 \psi_3.
\end{align}
For the time minimization problems with $f = 1$ we will use the short form of the Hamiltonian function 
\begin{align}
\mathcal{H} = u_1 \cos_{\tilde{\Omega}} \tilde{\theta} + u_2 \psi_3. \label{H-short}
\end{align}

Now we proceed with each specification for $U$ and $f$ separately.

\subsection{Euler's elastic problem}\label{ela-subsec}
We start with an $\Omega$-modification of one of the oldest optimal control problem defined by~(\ref{sys-se2})--(\ref{restr}) with~(\ref{UfE}). We have $u_1 \equiv 1$ and we should find $u_2$.



Consider the abnormal case $\psi_0 = 0$. From the maximality condition for $\mathcal{H}$ we have $\psi_3 \equiv 0$, therefore $\psi_1^2 + \psi_2^2 \neq 0$ and $\dot{\psi}_3 \equiv 0$ yields $\sin_{\tilde{\Omega}^\polar} \tilde{\theta}^\polar \equiv 0$ (see~(\ref{vert-md3})). Let us interpret this condition geometrically.
Since $\tilde{\Omega}$ is compact, there are the minimal value and the maximal value of $\cos_{\tilde{\Omega}} \tilde{\theta}$ denoted by $m_1<0$ and $m_2>0$. Notice that $\sin_{\tilde{\Omega}^\polar} \tilde{\theta}^\polar \equiv 0$ iff $\cos_{\tilde{\Omega}} \tilde{\theta} \equiv m_1$ or $\cos_{\tilde{\Omega}} \tilde{\theta}\equiv m_2$.
Therefore, 
if 
$\partial \Omega$  contains no edges, 
then $u_2 \equiv 0, \forall \alpha \in S^1$ with $x(t) = t \cos_{\Omega} \theta_0, \ y(t) = t \sin_{\Omega} \theta_0, \ \theta(t) = \theta_0$. Otherwise, for every edge of $\Omega$ with $\theta(t) \in [\theta_-, \theta_+]$, we have arbitrary $u_2 (t)$ for $\theta(t) \in (\theta_-, \theta_+)$ and $\mp u_2 (t) \geq 0$ when corresponding $\theta(t) = \theta_{\pm}$.

Normal case: $\psi_0 = -1$. From the maximality condition for $\mathcal{H} = - \frac{u_2^2}{2} + \cos_{\tilde{\Omega}} \tilde{\theta} + u_2 \psi_3$ we have $\dot{\tilde{\theta}} = u_2 = \psi_3$. 

Suppose $\psi_1 = \psi_2 = 0$. Then $\dot{\psi}_3\equiv0$, so $u_2 = \psi_3 = c \equiv \const$, i.e., $\theta = c t + \theta_0$. Explicit formulas for $x,y$ can be obtained via integration 
\begin{align}
x(t) = \frac{1}{c} \int_{\theta_0}^{c t + \theta_0} \cos_{\Omega} \theta d \theta, \qquad y(t) = \frac{1}{c} \int_{\theta_0}^{c t + \theta_0} \sin_{\Omega} \theta d \theta. \label{xy-dubins}
\end{align}

\begin{figure}[ht]
	\centering
	\includegraphics[width=0.195\textwidth]{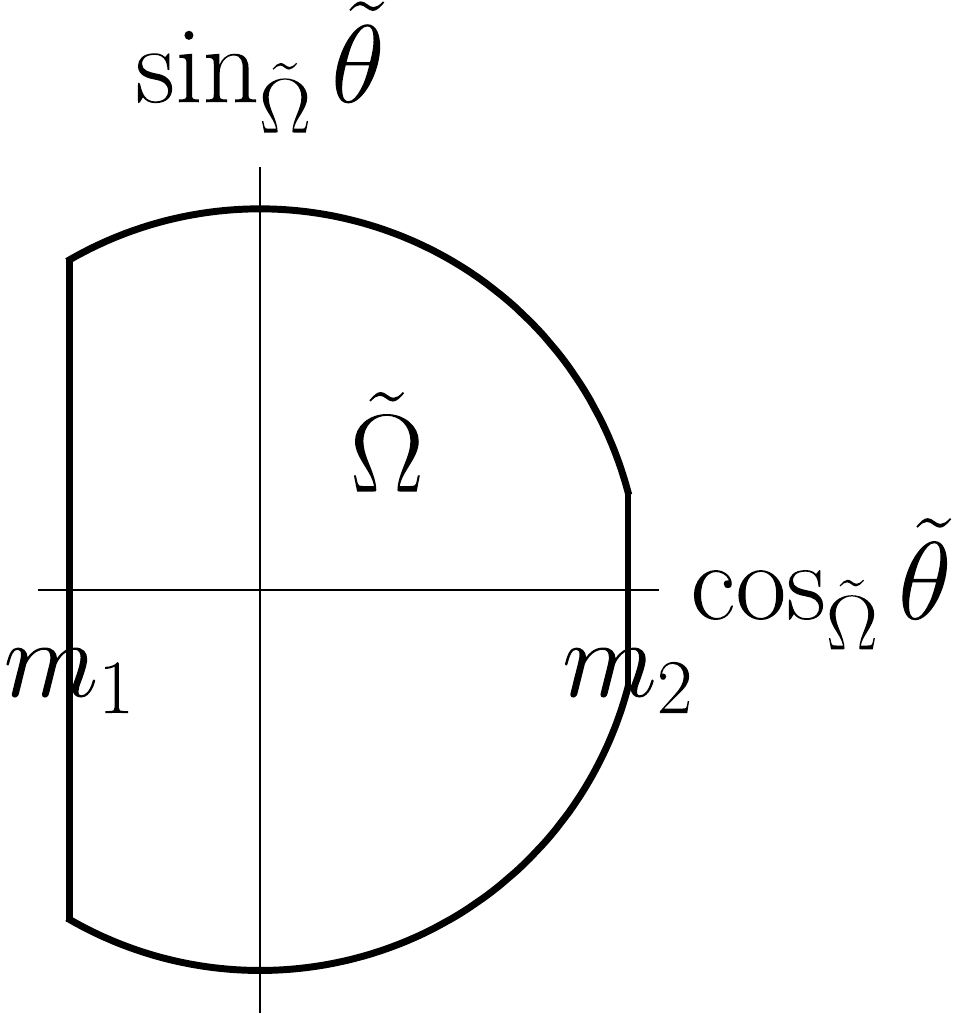} \qquad \includegraphics[width=0.32\textwidth]{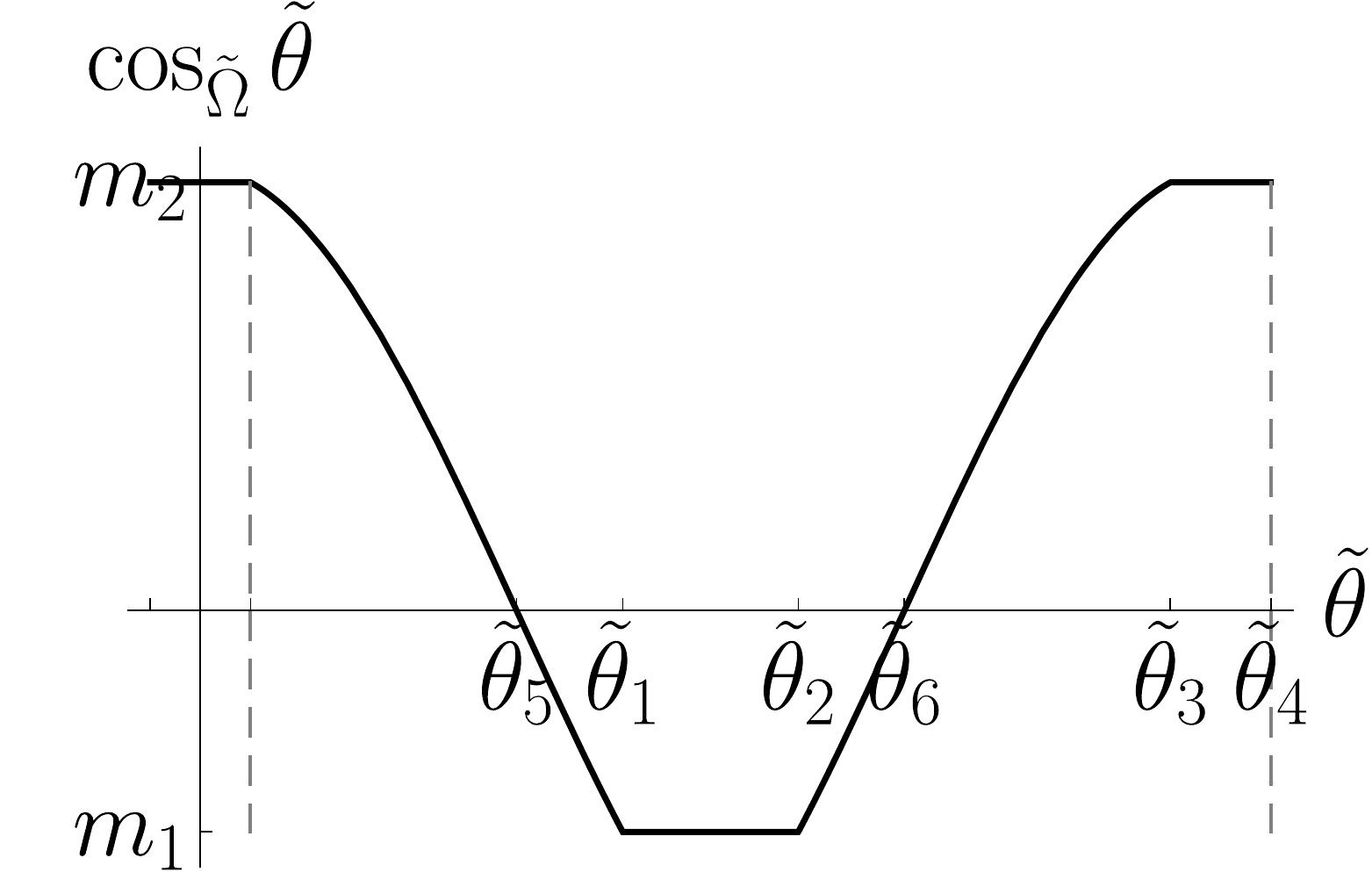} \qquad \includegraphics[width=0.32\textwidth]{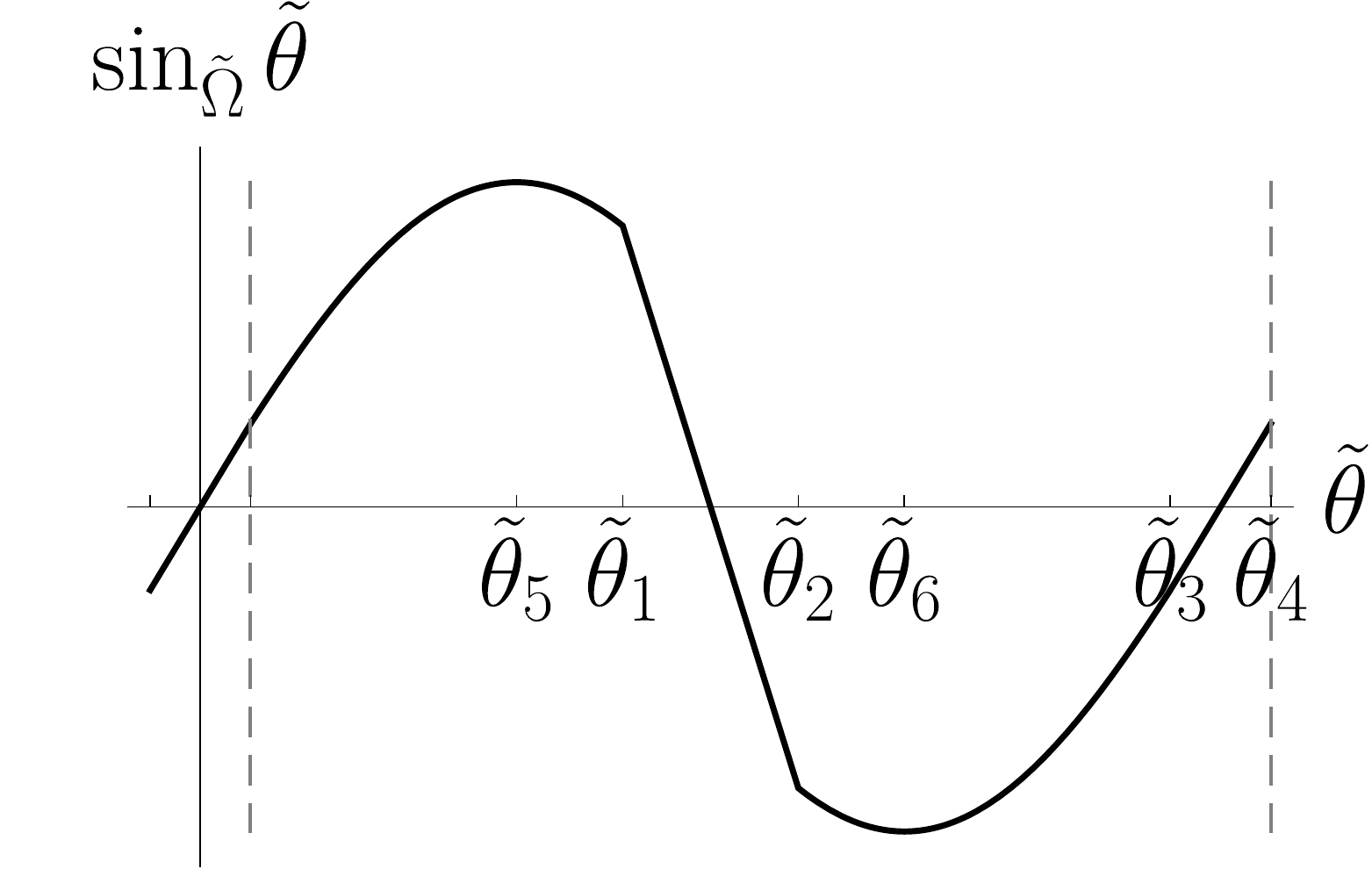} 
\caption{An example of $\tilde{\Omega}$ with the corresponding functions $\cos_{\tilde{\Omega}}, \sin_{\tilde\Omega}$ }
	\label{fig:DomainO}
\end{figure}

If $\psi_1^2 + \psi_2^2 \neq 0$, then we get~(\ref{vert-md3}) with the maximized Hamiltonian $H = \frac{1}{2}\dot{\tilde{\theta}}^2 + \cos_{\tilde{\Omega}} \tilde{\theta}$, which determines the phase portrait of the system (see Fig.~\ref{fig:phase_portrait_ela}).
Since $\tilde{\Omega}$ is compact, there are the minimal value and the maximal value of $\cos_{\tilde{\Omega}} \tilde{\theta}$ denoted by $m_1<0$ and $m_2>0$ (see Fig.~\ref{fig:DomainO}).

The~following cases are possible:
\begin{enumerate}
\item $H < m_1$ --- no solutions.
\item $H = m_1 \Rightarrow u_2 = \dot{\theta} \equiv 0, \cos_{\tilde{\Omega}} \tilde{\theta} \equiv  m_1$. The corresponding points are $\tilde{\theta} \in \cos_{\tilde{\Omega}}^{-1} m_1 \equiv [\tilde{\theta}_1, \tilde{\theta}_2] (\mod 2 \mathbb{S})$, see Fig.~\ref{fig:DomainO} at the center. Such points with condition $\dot{\theta} = u_2 = \psi_3 = 0$ set stable equilibria (see Fig.~\ref{fig:phase_portrait_ela}) and on $(x,y)$ we have straight lines with $\theta(t) \equiv \theta_0 \in [\tilde{\theta}_1 +\tilde{\alpha}, \tilde{\theta}_2+\tilde{\alpha}]$.
\item $H \in (m_1, m_2) \Rightarrow \dot{\theta} \not\equiv 0, \cos_{\tilde{\Omega}}\tilde\theta\leq H$. There exist two points $\tilde\theta_H^-\not\equiv \tilde\theta_H^+ (\mod 2\mathbb{S})$, s.t. $\cos_{\tilde{\Omega}}\tilde\theta_H^- = \cos_{\tilde{\Omega}}\tilde\theta_H^+ =  H$ and $\cos_{\tilde{\Omega}}\tilde\theta < H$ for $\tilde\theta \in (\tilde\theta_H^-,\tilde\theta_H^+)$. Here we have periodic behaviour of $\theta$ and quasiperiodic of $x$ and $y$, the corresponding trajectory is called inflectional with inflection points at $\theta(t) = \tilde\theta_H^-+\tilde{\alpha}$ and $\theta(t) = \tilde\theta_H^+ +\tilde{\alpha}$ satisfying $\dot{\theta}(t)=0$.
\item $H = m_2$. Denote $[\tilde\theta_3, \tilde\theta_4] \equiv \cos_{\tilde{\Omega}}^{-1} m_2 (\mod 2\mathbb{S})$. If $\psi_3 \equiv 0$, then  $u_2 = \dot{\theta} \equiv 0$ and points $\tilde{\theta} \in (\tilde{\theta}_3, \tilde{\theta}_4)$ are equilibria, however points $\tilde{\theta} \in \{\tilde{\theta}_3, \tilde{\theta}_4\}, \psi_3 = 0$ may be not fixed a priori. If $\psi_3 \neq 0$, then the corresponding solutions are two separatrices, which can have two types of approaching to points $\tilde{\theta} \in \{\tilde{\theta}_3, \tilde{\theta}_4\}$ with $\psi_3 = 0$. If $\tilde{\Omega}$ is $C^2$ in the neighborhood of the approaching point with $\tilde{\theta} = \tilde{\theta}_3$ or  $\tilde{\theta} = \tilde{\theta}_4$ then the separatrix is approaching for infinite time to the point, if there is a corner at the corresponding point of $\tilde{\Omega}$ then the separatrix approaches for a finite time (similar to Sec.~\ref{sec:3DLie}). This case provides the main difference for behaviour of the same system~(\ref{vert-md3}) in problems~\ref{ela-subsec} and~\ref{md-subsec}, see further. This case corresponds to the critical (one loop) elastica for classic formulation of the problem, however, $\Omega$-modification of the problem admits for the corresponding trajectory to have an arbitrary number of separatrices with straight segments (of arbitrary length) in between when $\tilde{\Omega}$ is not $C^2$. Detailed analysis of such a situation can be found in Sec.~\ref{sec:second-order} (up to change $\tilde\theta$ to $\theta^\polar$, $H$ to $\mathbb{E}$ and $\cos_{\tilde{\Omega}} \tilde{\theta}$ to $\mathcal{U}(\theta^\polar)$).
\item $H \in (m_2, + \infty) \Rightarrow  \psi_3 \neq 0$, this case provides so-called non-inflectional ($\sgn \dot{\theta} \equiv \pm 1$) solutions with periodic $\theta$ and quasiperiodic $x$ and $y$.
\end{enumerate}

\begin{figure}[htbp]
\includegraphics[width=0.49\textwidth]{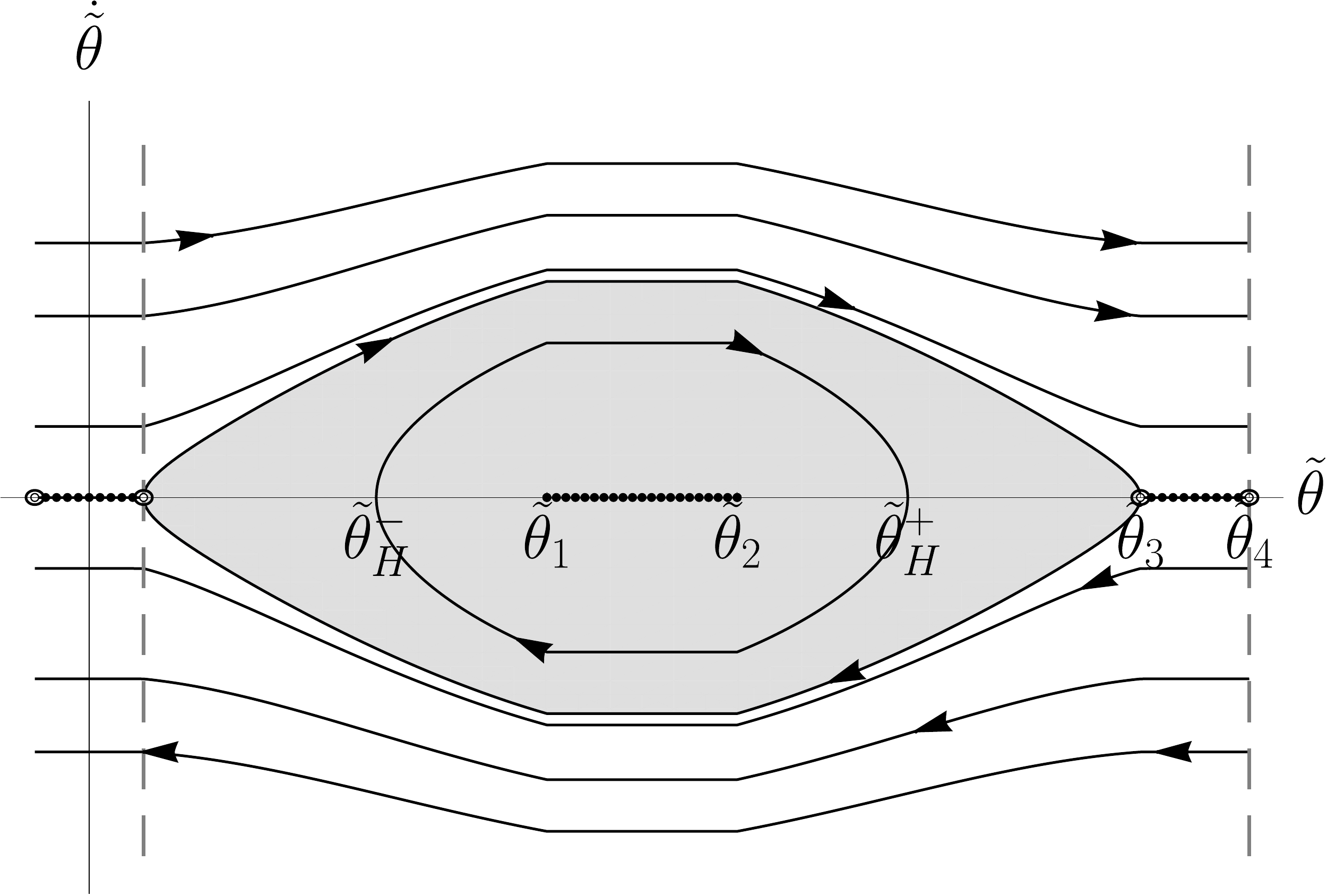}
\hfill
\includegraphics[width=0.49\textwidth]{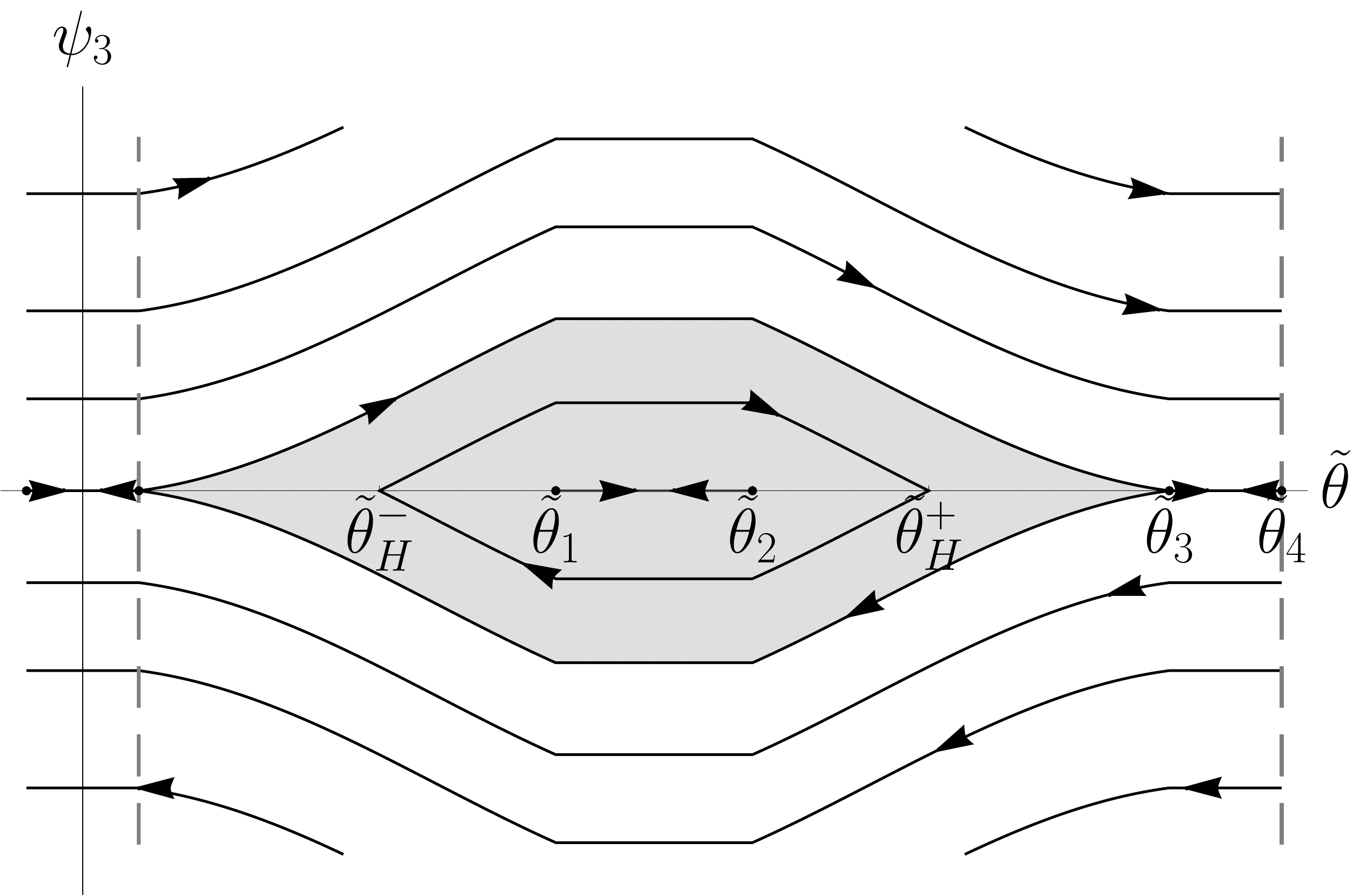}
\\
\parbox[t]{0.495\textwidth}{\caption{Phase portrait for $\Omega$-modifications of Euler elastica problem in the cylinder $(\tilde{\theta}, \dot{\tilde{\theta}})$ for $\tilde{\Omega}$ presented in Fig.~\ref{fig:DomainO}}\label{fig:phase_portrait_ela}}
\hfill
\parbox[t]{0.495\textwidth}{\caption{Phase portrait for $\Omega$-modifications of Markov-Dubins problem in the cylinder $(\tilde{\theta}, \psi_3)$ for $\tilde{\Omega}$ presented in Fig.~\ref{fig:DomainO}}\label{fig:phase_portrait_MD}}
\end{figure}

\subsection{Markov-Dubins problem}\label{md-subsec}
An $\Omega$-modification of Markov-Dubins problem is defined by~(\ref{sys-se2})--(\ref{boundary-se2}) with~(\ref{UfMD}), the problem is to find control $u_2 \in [-1,1]$.

Suppose $\psi_1 = \psi_2 = 0$. Then $\psi_3 \equiv \const \neq 0$ and from the maximum condition for $\mathcal{H} = u_2 \psi_3$ we have $H = |\psi_3|$ and $u_2 = \sgn \psi_3 \equiv \pm 1$. The corresponding trajectories on $(x,y)$ depend on the shape of $\Omega$ and can be obtained via integration as~(\ref{xy-dubins}), an example for $\Omega = \tilde\Omega$ presented on Fig.~\ref{fig:DomainO} is given on Fig.~\ref{fig:xy1}. Those two trajectories always meet at $t=2 \mathbb{S}$ after making one loop for any shape of $\Omega$, an example is given on Fig.~\ref{fig:xy1} at the right. 

\begin{figure}[ht]
	\centering
	\includegraphics[width=0.29\textwidth]{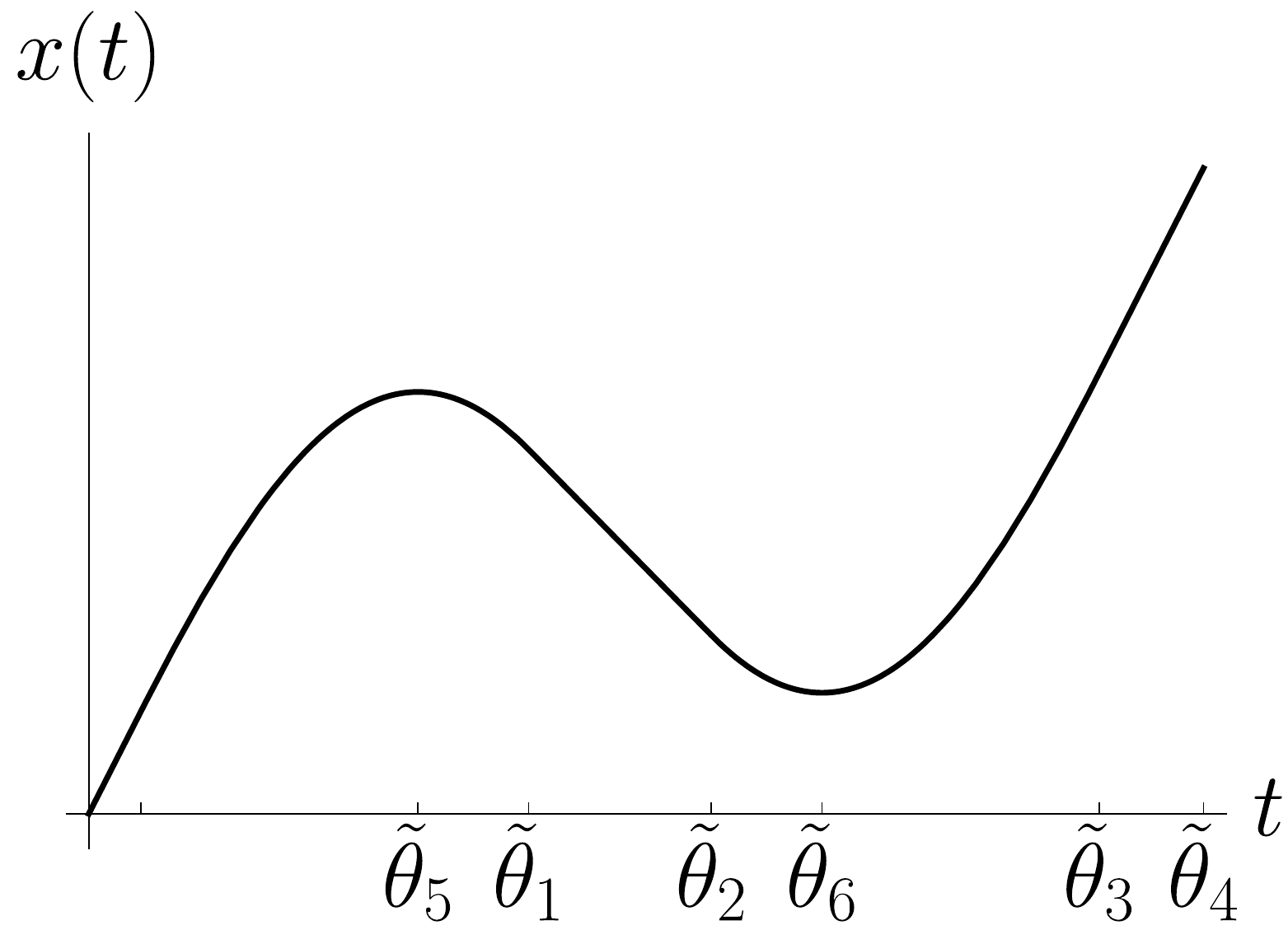} \qquad \qquad  \qquad \includegraphics[width=0.29\textwidth]{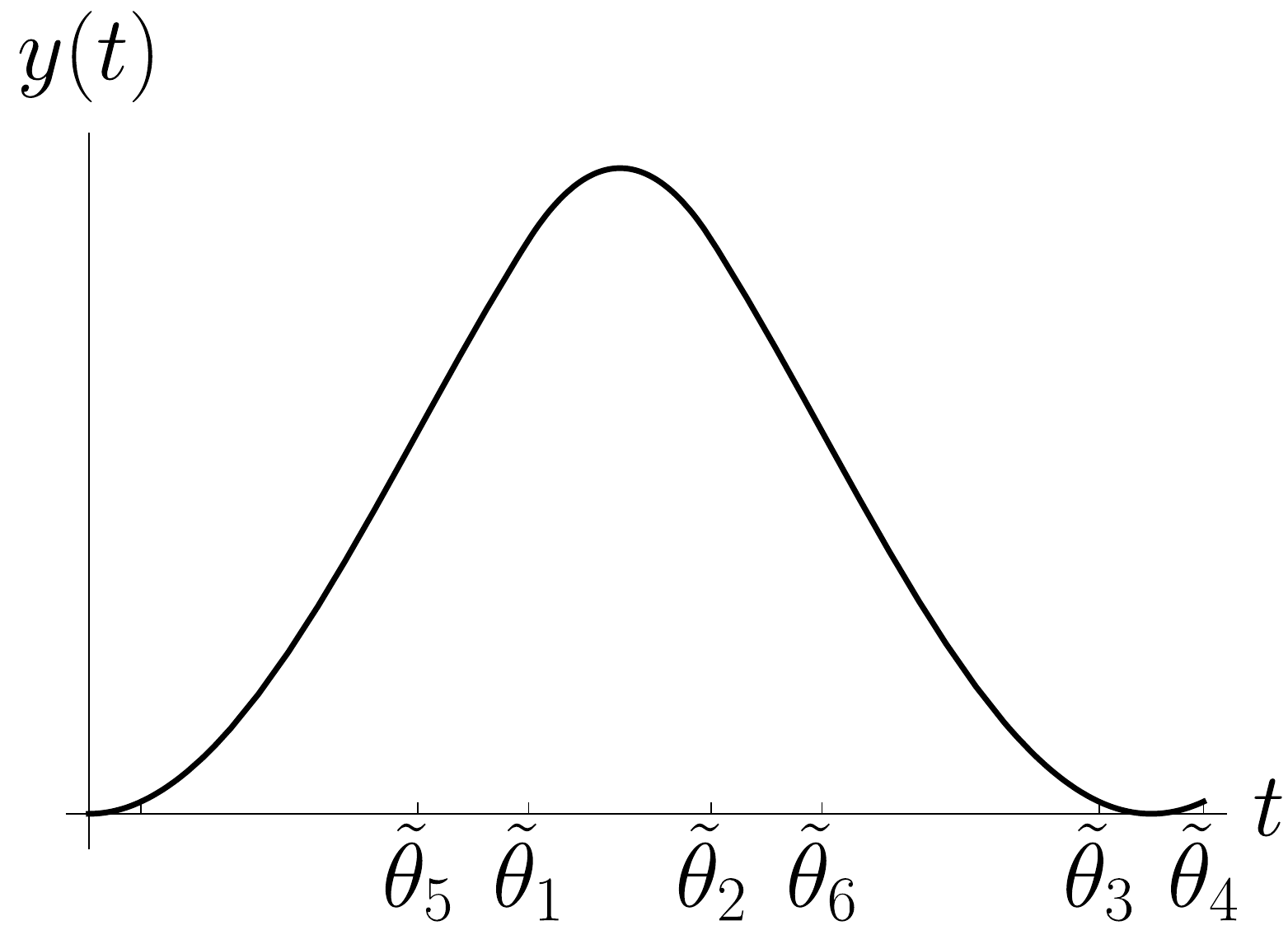} \qquad \qquad \qquad \includegraphics[width=0.105\textwidth]{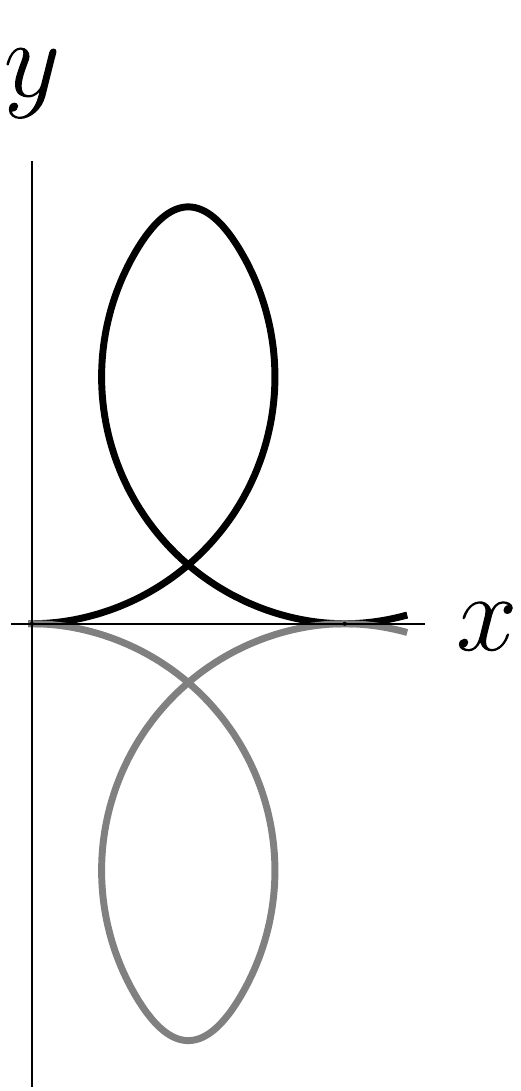}
\caption{Integrated functions $x(t), y(t)$ for $u_2 \equiv 1$ and trajectories on $(x,y)$ for $u_2 \equiv \pm 1$ with $\Omega = \tilde{\Omega}$ given on Fig.~\ref{fig:DomainO}}
	\label{fig:xy1}
\end{figure}

Let $\psi_1^2 + \psi_2^2 \neq 0$. We assume $\psi_1^2 + \psi_2^2 = 1$ w.l.o.g. 
From the maximality condition for $\mathcal{H}$~(\ref{H-short}) we have $$H = \cos_{\tilde{\Omega}} \tilde{\theta} + |\psi_3|, \qquad \dot{\tilde{\theta}}=u_2 = \sgn \psi_3 = \begin{cases}-1, \ &\psi_3<0, \\
[-1,1], \ &\psi_3=0,\\
1, \ &\psi_3>0.\end{cases}$$

The phase portrait of~(\ref{vert-md3}) on the cylinder $(\tilde{\theta}, \psi_3)$ is defined by the maximized Hamiltonian $H$ (see Fig.~\ref{fig:phase_portrait_MD}) and is similar to the one in Euler's elastic problem. Since $\tilde{\Omega}$ is compact, there is a minimal and maximal value of $\cos_{\tilde{\Omega}} \tilde{\theta}$ denoted by $m_1 < 0$, $m_2 > 0$.
The~following cases are possible:
\begin{enumerate}
\item $H < m_1$ --- no solutions.
\item\label{case1} $H = m_1 \Rightarrow \psi_3 \equiv 0, \cos_{\tilde{\Omega}} \tilde{\theta} = m_1$. If $\cos_{\tilde{\Omega}}^{-1} m_1$ is unique up to periodicity, then there is a unique solution defined by $\tilde{\theta} \equiv \cos_{\tilde{\Omega}}^{-1} m_1, u_2 = 0$; otherwise there is a family of solutions $\tilde{\theta}(t) \in \cos_{\tilde{\Omega}}^{-1} m_1 = [\tilde{\theta}_1, \tilde{\theta}_2]$ satisfying $|u_2(t)|\leq 1$ for $\tilde{\theta}(t) \in [\tilde{\theta}_1, \tilde{\theta}_2]$ with $u_2(t) \geq 0$ at $\tilde{\theta}(t) = \tilde{\theta}_1$ and $u_2(t) \leq 0$ at $\tilde{\theta}(t) = \tilde{\theta}_2$ (see Fig.~\ref{fig:phase_portrait_MD}).
\item $H \in (m_1, m_2), \cos_{\tilde{\Omega}} \tilde{\theta} \leq H \Rightarrow \psi_3 \not\equiv 0$, since for $\cos_{\tilde{\Omega}} \tilde{\theta} = H$ there holds	$\sin_{\tilde{\Omega}^\polar} \tilde{\theta}^\polar \neq 0$. Moreover $\cos_{\tilde{\Omega}}^{-1} H \equiv \{\tilde{\theta}_H^-, \tilde{\theta}_H^+\} (\mod 2 \mathbb{S})$, where $\tilde{\theta}_H^- < \tilde{\theta}_1 \leq \tilde{\theta}_2 < \tilde{\theta}_H^+$ (see Fig.~\ref{fig:phase_portrait_MD}). Therefore control $u_2$ is switching between $1$ and $-1$ at points $\tilde{\theta} \in \{\tilde{\theta}_H^-, \tilde{\theta}_H^+\}$, the corresponding time intervals are $T_b, T_1, \dots, T_1, T_e$, where $T_1 = \tilde{\theta}_H^+ - \tilde{\theta}_H^- < 2 \mathbb{S}$ with $T_b \leq T_1$, $T_e \leq T_1$.
\item\label{case3} $H = m_2$.  
When $\psi_3 = 0$ we have $\cos_{\tilde{\Omega}} \tilde{\theta} = m_2$ for $\tilde{\theta}\in [\tilde{\theta}_3, \tilde{\theta}_4] (\mod 2\mathbb{S})$ 
(see Fig.~\ref{fig:DomainO}). In this subcase when $\tilde{\theta} \in [\tilde{\theta}_3, \tilde{\theta}_4] (\mod 2\mathbb{S})$ we have arbitrary $|u_2(t)|\leq 1$ (below, we refer to such a control as uncertain control), it is possible to switch to certain control $u_2 = -1$ at $\tilde{\theta} = \tilde{\theta}_3$ and to certain control $u_2 = 1$ at $\tilde{\theta} = \tilde{\theta}_4$. After passing through the switching points $\tilde{\theta}_3, \tilde{\theta}_4$ we have $\psi_3 \neq 0$ and control $u_2 = \sgn \psi_3 \equiv  \pm 1$ for time $T_1 \equiv (\tilde{\theta}_3-\tilde{\theta}_4) (\mod  2\mathbb{S})$.
\item $H \in (m_2, + \infty), \psi_3 \neq 0$, control $u_2 \in \{-1,1\}$ is constant for arbitrary time similarly to the case $\psi_1=\psi_2=0$.
\end{enumerate}


\begin{thm}\label{Dubins1}
When $\Omega$ is strictly convex, then for any rotated $\tilde{\Omega}$ we have $\tilde{\theta}_1 \equiv \tilde{\theta}_2 (\mod 2\mathbb{S})$ and $\tilde{\theta}_3 \equiv \tilde{\theta}_4 (\mod 2\mathbb{S})$. Optimal control $u_2$ is a piecewise constant function with values $u_2^1, u_2^2, \dots, u_2^n$; corresponding time intervals $T^1, T^2, \dots, T^n$ and is one of two types:
\begin{itemize}
\item $u_2^{2k}=0, u_2^{2 k + 1} \in \{-1,1\}, k \in \N$; $T^{2 k}, k \in \N$ are arbitrary and $T^1 \leq 2 \mathbb{S}, T^n \leq 2 \mathbb{S}, T^{2k+1} = 2 \mathbb{S}$ for $k\neq 0, 2k+1 \neq n$.
\item $u_2^{2k} = \pm 1, u_2^{2 k + 1} = \mp 1, k \in \N$; $T^2 = T^3 = \dots = T^{n-1} = \hat{T} \leq 2 \mathbb{S}$, $T^1 \leq \hat{T}, T^n \leq \hat{T}$. 
\end{itemize}
\end{thm}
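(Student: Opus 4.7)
The plan is to run the case-by-case phase portrait analysis of the vertical subsystem~\eqref{vert-md3} carried out in Subsection~\ref{md-subsec} under the extra hypothesis of strict convexity, and show that the resulting extremal controls collapse into exactly the two families stated. First I would deduce the identifications $\tilde\theta_1 \equiv \tilde\theta_2$ and $\tilde\theta_3 \equiv \tilde\theta_4$ modulo $2\mathbb{S}$. By construction $[\tilde\theta_{2i-1},\tilde\theta_{2i}]$ is the range of generalized angles parametrising the subset of $\partial\tilde\Omega$ where $\cos_{\tilde\Omega}$ attains the extremum $m_i$; a nontrivial such interval would correspond to a vertical edge on $\partial\tilde\Omega$, and strict convexity (which is inherited by every rotation $\tilde\Omega$) forbids edges.

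Next I would treat the Type~2 family. Combining Case~3 of the phase portrait, the branch of Case~\ref{case3} with $\psi_3\neq 0$, and Case~5 (including the degenerate sub-case $\psi_1=\psi_2=0$), on every such extremal the sign of $\psi_3$ is constant between isolated zeros, so $u_2=\sgn\psi_3\in\{-1,+1\}$ is piecewise constant with alternating sign. The autonomous energy relation $|\psi_3|=H-\cos_{\tilde\Omega}\tilde\theta$ then forces every arc between two consecutive switches at $\tilde\theta=\tilde\theta_H^\pm$ to cover the same $\tilde\theta$-range and hence to have a common length~$\hat T$, equal to $\tilde\theta_H^+-\tilde\theta_H^- <2\mathbb{S}$ in Case~3, to $2\mathbb{S}$ in Case~\ref{case3}, and no more than $2\mathbb{S}$ in Case~5; the initial and terminal arcs are possibly truncated and therefore of length at most~$\hat T$.

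For Type~1 I would use Case~\ref{case3} with $\psi_3\equiv 0$. By the first step the saddle in the phase plane reduces to a single point $(\tilde\theta_3,0)$, where the maximum condition of PMP permits any $u_2\in[-1,1]$; choosing $u_2=0$ keeps the system there for an arbitrary time~$T^{2k}$. Between two such rest intervals the extremal may travel along a separatrix branch with $u_2=\pm 1$; since $\dot{\tilde\theta}=u_2=\pm 1$ and closing up the separatrix back at the saddle requires $\tilde\theta$ to advance by exactly one period $2\mathbb{S}$ (here we use $\tilde\theta_3\equiv\tilde\theta_4$), every interior separatrix loop must last $T^{2k+1}=2\mathbb{S}$, whereas the initial and terminal loops are partial, giving $T^1,T^n\le 2\mathbb{S}$.

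The step that I expect to require the most care is justifying the very existence of the nontrivial Type~1 extremals: for an interior separatrix arc to be a legitimate PMP trajectory it must reach the saddle in finite time, which amounts to convergence of the improper integral $\int d\tilde\theta/\sqrt{2(m_2-\cos_{\tilde\Omega}\tilde\theta)}$ at $\tilde\theta_3$. By Proposition~\ref{prop:corner_guarantees_non_uniqueness} and Theorem~\ref{thm:absense_of_uniqueness_for_general_ct_ode} of Section~\ref{sec:second-order} this convergence is governed by the regularity of $\partial\tilde\Omega$ at $P_{\tilde\theta_3}$. Strict convexity alone does not settle the outcome, since it still allows corners; when the integral diverges, only the degenerate Type~1 extremal (pure rest, $n=1$) appears, whereas when it converges the full alternating pattern is realised. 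Either way the classification asserted in the theorem remains valid.
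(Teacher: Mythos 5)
Your first three steps follow the same route as the paper: Theorem~\ref{Dubins1} is in fact just a summary of the case-by-case phase-portrait analysis of Subsection~\ref{md-subsec}, and you reproduce it correctly. Strict convexity forbids vertical supporting segments of $\partial\tilde\Omega$, whence $\tilde\theta_1\equiv\tilde\theta_2$ and $\tilde\theta_3\equiv\tilde\theta_4\ (\mathrm{mod}\ 2\mathbb{S})$; the level sets of $H=\cos_{\tilde\Omega}\tilde\theta+|\psi_3|$ together with $\dot{\tilde\theta}=u_2=\sgn\psi_3$ give the alternating pattern with equal interior durations for $H\in(m_1,m_2)$ and the rest/loop pattern for $H=m_2$. (A minor slip: in Case~5 the function $\psi_3$ never vanishes, so there are no switches at all and the extremal is a single constant arc; speaking of ``arcs between consecutive switches of length at most $2\mathbb{S}$'' there is meaningless.)

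The final paragraph, however, contains a genuine error. You import the finite-time-to-saddle criterion of Section~\ref{sec:second-order} --- convergence of $\int d\tilde\theta/\sqrt{2(m_2-\cos_{\tilde\Omega}\tilde\theta)}$ --- into the Markov--Dubins problem. That criterion governs systems of the form $\ddot\theta^\polar\in-\mathcal{U}'(\theta^\polar)$, i.e., situations where $\dot{\tilde\theta}=\psi_3$, as in the elastica and sub-Riemannian specializations. Here the maximality condition gives $\dot{\tilde\theta}=u_2=\sgn\psi_3=\pm1$ along the separatrix, so the time needed to traverse any $\tilde\theta$-arc equals its length, and the saddle is \emph{always} reached in finite time ($2\mathbb{S}$ per full loop), whatever the regularity of $\partial\tilde\Omega$ at $P_{\tilde\theta_3}$. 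This is precisely the ``main difference'' between the elastica and Markov--Dubins portraits that the paper emphasizes at the end of Case~4 of Subsection~\ref{ela-subsec}. Consequently your conclusion that divergence of the integral would leave only the degenerate $n=1$ rest extremal is false: for the unit disc the integral diverges, yet the circle--segment--circle Dubins paths (controls $\pm1,0,\pm1$) are exactly the nontrivial Type~1 extremals and are classically known to be optimal. The correct justification for the existence of the interior loops of duration $2\mathbb{S}$ is simply that $|\dot{\tilde\theta}|=1$ on them; no improper-integral condition and no appeal to Proposition~\ref{prop:corner_guarantees_non_uniqueness} or Theorem~\ref{thm:absense_of_uniqueness_for_general_ct_ode} is needed.
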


\begin{thm}\label{Dubins2}
 When $\Omega$ is not strictly convex, then $\tilde{\Omega}$ for some angle $\alpha$ has a vertical side at the right or at the left, as a consequence $\tilde{\theta}_3 \not \equiv \tilde{\theta}_4 (\mod 2\mathbb{S})$ or $\tilde{\theta}_1 \not \equiv \tilde{\theta}_2 (\mod 2\mathbb{S})$. This case admits not only optimal piecewise constant control described in Theorem~$\ref{Dubins1}$, but also admits optimal control with uncertain pieces corresponding to the edge $\theta (t) \in [\tilde{\theta}_3 + \tilde{\alpha},\tilde{\theta}_4+\tilde{\alpha}]$ or to the edge $\theta (t) \in [\tilde{\theta}_1 + \tilde{\alpha},\tilde{\theta}_2+\tilde{\alpha}]$, which are described in case~$\ref{case3}$ or in case~$\ref{case1}$ correspondingly.

In such situation case~$\ref{case1}$ defines strictly singular trajectories. Meanwhile, case~$\ref{case3}$ provides mixed (combination of singular and nonsingular) trajectories.
\end{thm}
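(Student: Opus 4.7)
\textbf{Plan for Theorems~\ref{Dubins1} and~\ref{Dubins2}.} My plan is to convert the five-case phase-portrait analysis of the vertical subsystem~\eqref{vert-md3} (carried out in the paragraph preceding the theorems) into a catalogue of admissible control patterns, using only one extra ingredient on the shape of the unit level set of $\cos_{\tilde{\Omega}}$. Throughout, $H=\cos_{\tilde{\Omega}}\tilde\theta+|\psi_3|$ is constant along any trajectory, and since $\dot{\tilde\theta}=u_2=\operatorname{sgn}\psi_3\in\{-1,+1\}$ off the set $\{\psi_3=0\}$, every separatrix and every closed level set is traversed at unit angular speed in $\tilde\theta$. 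In particular, one full circuit of the cylinder in $\tilde\theta$ always costs time exactly $2\mathbb{S}$, which explains the constants appearing in the theorems.

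\textbf{Step~1 (strict convexity $\Rightarrow$ unique extrema of $\cos_{\tilde{\Omega}}$).} The only nontrivial geometric input for Theorem~\ref{Dubins1} is the claim $\tilde\theta_1\equiv\tilde\theta_2$ and $\tilde\theta_3\equiv\tilde\theta_4\pmod{2\mathbb{S}}$. I will argue by contraposition: if $\cos_{\tilde{\Omega}}$ were constant on a nondegenerate arc $[\tilde\theta_-,\tilde\theta_+]$, then the boundary points $P_{\tilde\theta}=(\cos_{\tilde{\Omega}}\tilde\theta,\sin_{\tilde{\Omega}}\tilde\theta)$ for $\tilde\theta$ in that arc would all share the same first coordinate and hence lie on a common vertical segment contained in $\partial\tilde{\Omega}$, contradicting strict convexity. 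This reduces the energy levels $H=m_1$ (center) and $H=m_2$ (saddle) to \emph{single} equilibria of the phase flow on the cylinder.

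\textbf{Step~2 (enumeration of extremal patterns).} I will then go through the cases $H>m_2$, $H\in(m_1,m_2)$, $H=m_2$, $H=m_1$ and recognise them in the statements. Cases $H>m_2$ and $H=m_1$ are the degenerate ``one-piece'' members of the two families. Case $H\in(m_1,m_2)$ gives closed orbits encircling the center; on such an orbit $\psi_3$ changes sign precisely at $\tilde\theta=\tilde\theta_H^\pm$, so $u_2$ switches between $\pm1$ with common interior segment length $\hat T=\tilde\theta_H^+-\tilde\theta_H^-\le 2\mathbb{S}$, yielding Type~2. Case $H=m_2$ is the heart of Type~1: the saddle $(\tilde\theta_3,0)=(\tilde\theta_4,0)$ is reached in finite time by both separatrices (because $\dot{\tilde\theta}=\pm1$ does not degenerate near the saddle, unlike in Sec.~\ref{ela-subsec}), and once there we have Cauchy non-uniqueness, since $\sin_{\tilde{\Omega}^\polar}\tilde\theta^\polar=0$ forces $\dot\psi_3=0$ so the trajectory may linger with any $u_2\in[-1,1]$ for arbitrary time before leaving along a separatrix. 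Each interior separatrix segment goes once around the cylinder and consumes time $2\mathbb{S}$; only the first and last can be shorter. The fact that these are the \emph{only} patterns is the uniqueness content of Propositions~\ref{prop:unique_solution_if_dot_theta_ne_zero}--\ref{prop:uniqueness_if_zero_not_in_U_derivative}, applied off the saddle set.

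\textbf{Step~3 (non-strictly convex case).} For Theorem~\ref{Dubins2}, failure of strict convexity produces a straight segment in $\partial\Omega$; a rotation aligns it vertically, so for some $\alpha$ the set $\tilde{\Omega}=e^{\ii\alpha}\Omega$ carries a vertical edge on the right or on the left, which is precisely the statement $\tilde\theta_3\not\equiv\tilde\theta_4$ or $\tilde\theta_1\not\equiv\tilde\theta_2$. On such an edge $\cos_{\tilde{\Omega}}\tilde\theta\equiv m_2$ (resp.\ $m_1$) and the maximum of the Hamiltonian is attained simultaneously by every $u_2\in[-1,1]$, producing the uncertain-control pieces. Matching this with cases~\ref{case1} and~\ref{case3} of the phase-portrait list gives the strictly singular (case~\ref{case1}: pure $H=m_1$ arc with $u_2$ uncertain) and the mixed (case~\ref{case3}: uncertain arc in $[\tilde\theta_3,\tilde\theta_4]$ concatenated with certain $u_2=\pm1$ separatrix arcs of length $2\mathbb{S}$) trajectories listed in the statement. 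The patterns of Theorem~\ref{Dubins1} persist as well, since one can still have $H\neq m_1,m_2$.

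\textbf{Main obstacle.} The calculations above are essentially bookkeeping once the phase portrait is in hand; the delicate point is ruling out spurious switching patterns at $H=m_2$. Concretely, I must show that (i)~after leaving the saddle, the trajectory cannot re-enter and re-exit in less than $2\mathbb{S}$ units of time, and (ii)~two consecutive bang arcs of opposite sign cannot occur away from the saddle, i.e.\ $\psi_3$ cannot vanish on the separatrix except at the saddle itself. Both follow from $|\dot{\tilde\theta}|=1$ on any bang arc together with $|\psi_3|=H-\cos_{\tilde{\Omega}}\tilde\theta$ vanishing only at the unique saddle $\tilde\theta_3=\tilde\theta_4$ (Step~1), but this is the step that distinguishes the Markov--Dubins setting from the superficially similar Euler elastica setting of Subsec.~\ref{ela-subsec}, and it is where strict convexity is genuinely used.
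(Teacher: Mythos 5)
Your proposal follows essentially the same route as the paper: Theorem~\ref{Dubins2} is there just a summary of the case-by-case phase-portrait analysis of~\eqref{vert-md3} preceding it, and your Steps~1 and~3 reconstruct exactly that analysis (an edge of $\Omega$ rotated to the vertical gives a nondegenerate maximiser or minimiser set of $\cos_{\tilde{\Omega}}$, hence singular arcs with $\psi_3\equiv 0$ and uncertain $u_2\in[-1,1]$, which the separatrices reach in finite time because $|\dot{\tilde{\theta}}|=1$; the $H=m_1$ level is the pure singular segment, the $H=m_2$ level additionally carries bang separatrix arcs, whence mixed trajectories). Two small corrections. First, in the non-strictly convex case the certain separatrix arcs inside the mixed trajectories of case~\ref{case3} have duration $T_1\equiv(\tilde{\theta}_3-\tilde{\theta}_4)\ (\mathrm{mod}\ 2\mathbb{S})$, i.e.\ $2\mathbb{S}-(\tilde{\theta}_4-\tilde{\theta}_3)<2\mathbb{S}$, not $2\mathbb{S}$ as written in your Step~3; the full period $2\mathbb{S}$ occurs only when the edge degenerates to a point (the setting of Theorem~\ref{Dubins1}). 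Second, Propositions~\ref{prop:unique_solution_if_dot_theta_ne_zero}--\ref{prop:uniqueness_if_zero_not_in_U_derivative} concern the second-order inclusion~\eqref{eq:general_ct_ode} and do not literally apply to the bang dynamics $\dot{\tilde{\theta}}=\sgn\psi_3$, $\dot{\psi}_3=\sin_{\tilde{\Omega}^\polar}\tilde{\theta}^\polar$ of the Markov--Dubins case; uniqueness off the set $\{\psi_3=0\}$ is here immediate, since $\tilde{\theta}$ moves at unit speed and $|\psi_3|=H-\cos_{\tilde{\Omega}}\tilde{\theta}$ is then determined by the energy level. Neither point affects the validity of the classification claimed in Theorem~\ref{Dubins2}.
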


\subsection{Reeds-Shepp problem}\label{subsec:R-S}
An $\Omega$-modification of Reeds-Shepp problem is defined by~(\ref{sys-se2})--(\ref{boundary-se2}) with~(\ref{UfRS}).

Suppose $\psi_1 = \psi_2 = 0$. Then $\psi_3 \equiv \const \neq 0$ and  from the maximum condition for $\mathcal{H} = u_2 \psi_3$ we have $H = |\psi_3|$ with $u_2 \equiv \sgn \psi_3$. For any choice of $u_1 \equiv \pm 1$, we get an optimal trajectory since it is not possible to turn on such an angle faster (up to infinity for ``non-factorized problem''), explicit formulas for $(x,y)$ can be obtained via integration as in (\ref{xy-dubins}) with $c=1$ for $u_1\equiv 1$ and inverse for $u_1\equiv -1$. 

Let $\psi_1^2 + \psi_2^2 \neq 0$. We assume $\psi_1^2 + \psi_2^2 = 1$ w.l.g. 
From the maximality condition for $\mathcal{H}$ we have $$H = |\cos_{\tilde{\Omega}} \tilde{\theta}| + |\psi_3|, \qquad u_1= \sgn \cos_{\tilde{\Omega}} \tilde{\theta}, \qquad \dot{\tilde{\theta}} = u_2 = \sgn \psi_3 = \begin{cases}-1, \ &\psi_3<0, \\
[-1,1], \ &\psi_3=0,\\
1, \ &\psi_3>0.\end{cases}$$

The phase portrait of~(\ref{vert-md3}) on the cylinder $(\tilde{\theta}, \psi_3)$ is defined by the 	maximized Hamiltonian $H$. 
There are only two points $\tilde{\theta}_5 = \theta_{\tilde{\Omega}} (\pi/2)$ and $\tilde{\theta}_6 = \theta_{\tilde{\Omega}} (3 \pi/2)$ satisfying the condition $\cos_{\tilde{\Omega}} \tilde{\theta} = 0$ on $\tilde{\Omega}$ (see Fig.~\ref{fig:DomainO}). 
Since $\tilde{\Omega}$ is compact, there is a minimal and maximal value of $\cos_{\tilde{\Omega}} \tilde{\theta}$ denoted by $m_1 < 0$, $m_2 > 0$. We assume $|m_1| \leq m_2$ without loss of generality. Suppose $\cos_{\tilde{\Omega}}^{-1} m_1 = [\tilde\theta_1, \tilde\theta_2]$ and $\cos_{\tilde{\Omega}}^{-1} m_2 = [\tilde\theta_3, \tilde\theta_4]$.

The~following cases are possible for $H \geq 0$:
\begin{enumerate}
\item $H = 0$, then $\dot{\theta}=\psi_3 \equiv 0$ and $\tilde{\theta} \in \{\tilde{\theta}_5,\tilde{\theta}_6\}$.
We have two fixed points with $u_1 = \pm 1, u_2 = 0$, the corresponding trajectory on $(x,y)$ is a straight line. 
\item\label{case2} $H \in (0, |m_1|)$. From convexity of $\tilde{\Omega}$ we have $\cos_{\tilde{\Omega}}^{-1} H = \{\tilde\theta_{+H}^+,\tilde\theta_{+H}^-\}$ and $\cos_{\tilde{\Omega}}^{-1} (-H) = \{\tilde\theta_{-H}^+,\tilde\theta_{-H}^-\}$. This subcase gives us two families of trajectories with $\tilde\theta \in [\tilde\theta_{+H}^+, \tilde\theta_{-H}^+]$ and $\tilde\theta \in [\tilde\theta_{-H}^-, \tilde\theta_{+H}^-]$. Consider the first one without loss of generality. The control $(u_1,u_2)$ is switching as follows: $\dots, (1,1), (-1,1), (-1,-1), (1,-1), \dots$. The corresponding time intervals $T_{(1,1)}, T_{(-1,1)}, T_{(-1,-1)}, T_{(1,-1)}$ satisfy $T_{(1,1)}=T_{(1,-1)} = \tilde\theta_5 - \tilde{\theta}_{+H}^+, T_{(-1,1)}=T_{(-1,-1)} = \tilde{\theta}_{-H}^+ - \tilde\theta_5$ (see Fig.~\ref{fig:phase_portrait_RS}). The first and the last control can be one of the four types with shortened  time intervals $T_b, T_e$.
\item\label{case3c} $H = |m_1|$. If $\tilde{\theta}\in [\tilde{\theta}_1, \tilde{\theta}_2] (\mod 2\mathbb{S})$ we have $u_1(t) = -1$ and arbitrary $|u_2(t)|\leq 1$ (an uncertain control), it is possible to switch to certain control $u_2 = -1$ while passing through $\tilde{\theta} = \tilde{\theta}_1$ and to certain control $u_2 = 1$ while passing through $\tilde{\theta} = \tilde{\theta}_2$, the time intervals corresponding to these certain controls are equal to $\tilde{\theta}_1-\tilde\theta_5$ and $\tilde{\theta}_6-\tilde{\theta}_2$, then we have $3$ more switchings to a certain controls according to Fig.~\ref{fig:phase_portrait_RS} similarly to case~\ref{case2}. After making full loop we go back to the points $\tilde\theta_1, \tilde\theta_2$, where we can switch to uncertain control again staying in $[\tilde\theta_1, \tilde\theta_2]$ or go for another full loop with certain controls.
\item\label{case4c} $H \in (|m_1|, m_2)$, this case can be treated in the same way as case~\ref{case2}. The control $(u_1, u_2)$ is switching in a sequence as follows: $\dots, (1,1), (-1,1), (1,1), (1,-1), (-1,-1), (1,-1), \dots$ (further repeating with the same pattern). The corresponding time intervals in the pattern are $\dots, T_1, T_2, T_3, T_3, T_2, T_1, \dots$, where $T_3= \tilde\theta_6 - \tilde\theta_5$ and values of $T_1, T_2$ depend on points $\cos_{\tilde\Omega}^{-1} H$.
\item\label{case5c} $H = m_2$, this case can be treated in the same way as case~\ref{case3c}. Here we have an uncertain control $u_2 (t) \in [-1,1]$ for interval $\tilde\theta (t) \in [\tilde\theta_3, \tilde\theta_4]$ with $u_1(t) = 1$. Passing through the point $\tilde\theta_3$ the control $u$ is switching according to certain pattern $(1,-1), (-1,-1), (1,-1)$ with the corresponding time intervals $\ T_1 = \tilde\theta_3-\tilde\theta_6, \ T_3 = \tilde{\theta}_6 - \tilde\theta_5, \ T_2 = \tilde{\theta}_5 - \tilde{\theta}_4 (\mod 2\mathbb{S})$. Passing through the point $\tilde\theta_4$ the control $u$ is switching to certain pattern $(1,1), (-1,1), (1,1)$ with the time intervals $T_2, T_3, T_1$.
\item $H \in (m_2, + \infty)$ with $\psi_3 \neq 0$ we have $u_2 \equiv \pm 1$, the other control $u_1$ is switching between $1$ and $-1$ with the corresponding time intervals $T_+ = \tilde\theta_6 - \tilde\theta_5, T_- = 2\mathbb {S} - T_+$. 
\end{enumerate}

\begin{remark}
When $|m_1| = m_2$ we are missing case~\ref{case4c} and cases~\ref{case3c},~\ref{case5c} join in the one. It provides two uncertain intervals $\tilde\theta \in [\tilde\theta_1, \tilde\theta_2] \cup [\tilde\theta_3, \tilde\theta_4]$ with $\psi_3 = 0$. It is possible to switch to certain controls at points $\tilde\theta_1; \tilde\theta_2; \tilde\theta_3; \tilde\theta_4$ as it is described in cases~\ref{case3c},~\ref{case5c}, however, after passing two certain intervals with controls $(-1,-1),(1,-1); (-1,1),(1,1); (1,-1),(-1,-1); (1,1),(-1,1)$ correspondingly, we approach to the corresponding points $\tilde\theta_4; \tilde\theta_3; \tilde\theta_2; \tilde\theta_1$, where we have can switch to uncertain control or continue with certain control according to the same pattern.
\end{remark}

\begin{figure}[htbp]
\includegraphics[width=0.49\textwidth]{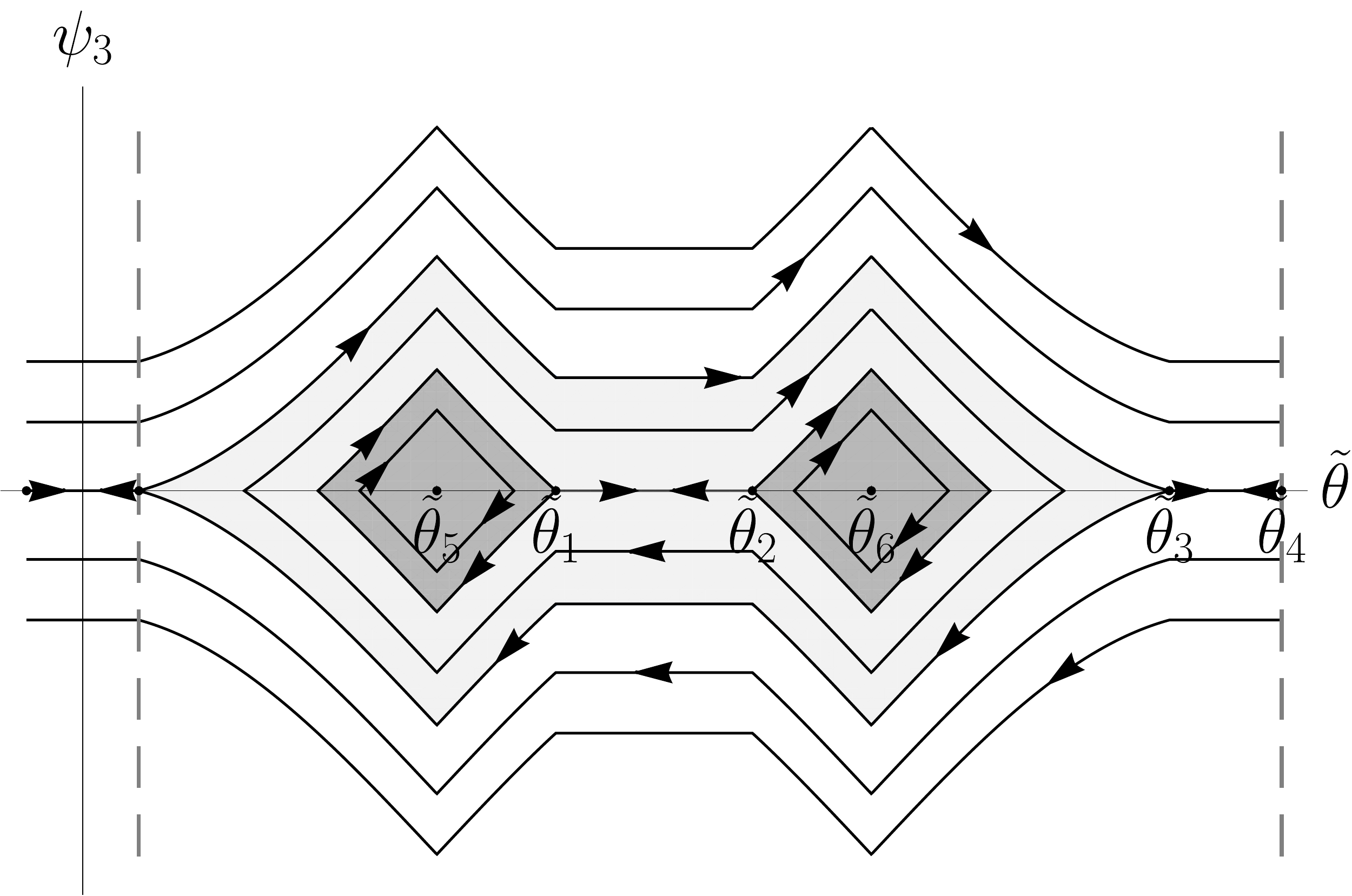}
\hfill
\includegraphics[width=0.49\textwidth]{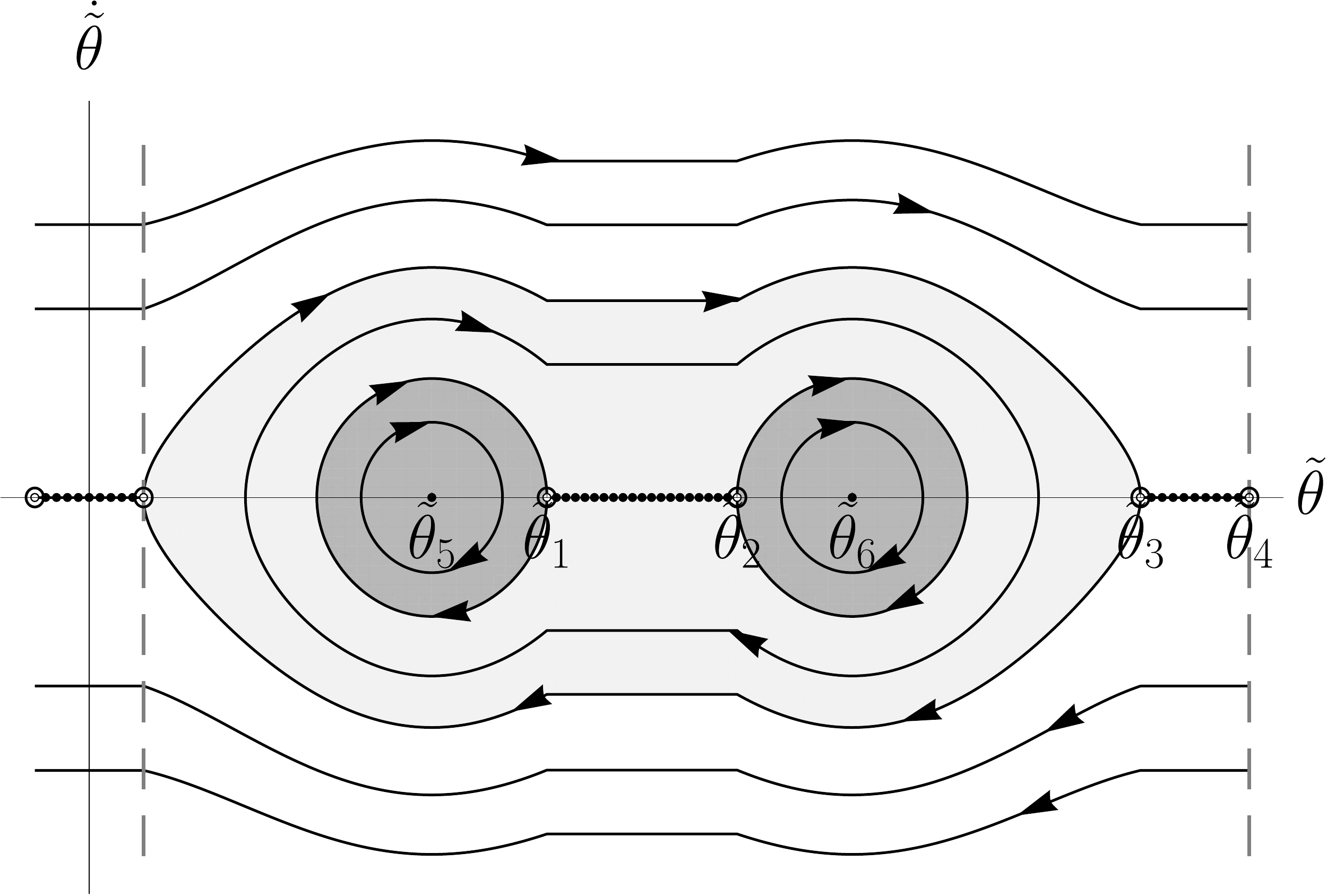}
\\
\parbox[t]{0.495\textwidth}{\caption{Phase portrait for $\Omega$-modifications of Reeds-Shepp problem in the cylinder $(\tilde{\theta}, \psi_3)$ for $\tilde{\Omega}$ presented in Fig.~\ref{fig:DomainO}}\label{fig:phase_portrait_RS}}
\hfill
\parbox[t]{0.495\textwidth}{\caption{Phase portrait for $\Omega$-modifications of Sub-Riemannian problem on $\SE(2)$ in the cylinder $(\tilde{\theta}, \dot{\tilde{\theta}})$ for $\tilde{\Omega}$ presented in Fig.~\ref{fig:DomainO}}\label{fig:phase_portrait_SR}}
\end{figure}

\subsection{Sub-Riemannian problem on $\SE(2)$}
An $\Omega$-modification of the sub-Riemannian problem on $\SE(2)$ is defined by~(\ref{sys-se2})--(\ref{boundary-se2}) with~(\ref{UfSR1}).

The abnormal case $\psi_0 = 0$ is possible only as a trivial solution $u_1 = u_2 = 0$.

Normal case: $\psi_0 = -1$. From the maximality condition for $\mathcal{H}$ we have 
\[
u_1 = \psi_1 \cos_{\Omega} \theta + \psi_2 \sin_{\Omega} \theta, \qquad \dot{\tilde{\theta}} = u_2 = \psi_3.
\] 

Suppose $\psi_1 = \psi_2 = 0$, then $u_1 \equiv 0$ and $\dot{\psi}_3 \equiv 0$, therefore $u_2  \equiv \const$, moreover we can assume w.l.o.g. $u_2 \equiv \pm 1$. The corresponding trajectory is optimal up to infinity for ``non-factorized problem''.

If $\psi_1^2 + \psi_2^2 \neq 0$, then we assume $\psi_1^2 + \psi_2^2 = 1$ w.l.g. 
Using the same notation as in~(\ref{vert-md3}) we get
\begin{equation}\label{vert-rs2}
\dot{\psi}_3 = \cos_{\tilde{\Omega}} \tilde{\theta} \sin_{\tilde{\Omega}^\polar} \tilde{\theta}^\polar 
\end{equation}
with the maximized Hamiltonian $H = \frac{1}{2} (\cos_{\tilde{\Omega}}^2 \tilde{\theta} + \dot{\tilde{\theta}}^2)$, which determines the phase portrait of system~(\ref{vert-rs2}) (see Fig.~\ref{fig:phase_portrait_SR}). This Hamiltonian system (up to change $\tilde{\theta}$ for $\theta^\circ$ and $\tilde{\Omega}$ for $\Omega^\circ$) was considered in details in Sec.~\ref{sec:3DLie} and at the same time it is similar to the one considered in the previous Subsection~\ref{subsec:R-S}, further we remark the main difference between them.	


Since $\tilde{\Omega}$ is compact, we denote the minimal and maximal values of $\cos_{\tilde{\Omega}} \tilde{\theta}$ by $m_1$ and $m_2$ and assume $m_1^2 \leq m_2^2$ without loss of generality. The following cases are possible for $H \geq 0$:
\begin{enumerate}
\item $H = 0$, then we have only a trivial solution $u_1 = u_2 = 0$. 
\item $H \in (0,m_1^2)$, using similar to case~\ref{case2} (see Subsec.~\ref{subsec:R-S}) notation $\cos_{\tilde{\Omega}}^{-1} H = \{\tilde\theta_{+H}^+,\tilde\theta_{+H}^-\}$, $\cos_{\tilde{\Omega}}^{-1} (-H) = \{\tilde\theta_{-H}^+,\tilde\theta_{-H}^-\}$
we get inflectional solutions with small amplitudes for the angle parameter $\tilde\theta \in [\tilde\theta_{+H}^+, \tilde\theta_{-H}^+]$ and $\tilde\theta \in [\tilde\theta_{-H}^-, \tilde\theta_{+H}^-]$.
\item\label{case3-d} $H = m_1^2$. If $\psi_3 = 0$, then we have fixed points for $\tilde\theta \in (\tilde\theta_1,\tilde\theta_2)$, points $\tilde\theta \in \{\tilde\theta_1,\tilde\theta_2\}$ are also fixed when $\Omega$ is $C^2$ in the neighborhood. If $\psi_3 \neq 0$, then we have two small separatrices approaching to points $\tilde\theta \in \{\tilde\theta_1,\tilde\theta_2\}$ (for infinite time when $\Omega$ is $C^2$ in the neighborhood of the approaching points). 
\item $H \in (m_1^2, m_2^2)$, this case corresponds to inflectional solutions with big amplitude for angle $\tilde\theta$.
\item $H = m_2^2$, similarly to case~\ref{case3-d} we have fixed points at $\tilde\theta \in (\tilde\theta_3, \tilde\theta_4)$, two big separatrices for $\tilde\theta \in (\tilde\theta_4, \tilde\theta_3) (\mod 2\mathbb{S})$, which approach to points $\tilde\theta \in \{\tilde\theta_3, \tilde\theta_4\}$ for infinite time when $\Omega$ is $C^2$ in the neighborhood.
\item $H \in (m_2^2, + \infty)$; corresponding trajectories on $(x,y)$ are non-inflectional ones, since $\dot\theta = \psi_3 \neq 0$. 
\end{enumerate}
Detailed analysis for cases $H = m_1^2$ and $H = m_2^2$ can be found in Sec.~\ref{sec:second-order} (up to change $\tilde\theta$ to $\theta^\polar$, $H$ to $\mathbb{E}$ and $\cos_{\tilde{\Omega}}^2 \tilde{\theta}$ to $\mathcal{U}(\theta^\polar)$).


\section{Plane dynamic motion}
\label{sec:plane_dynamic}

In this section, we obtain exact formulae for extremals in the following problem with drift:
\[
T\to\min,\qquad\ddot x=u\in\Omega,
\]
\noindent with some given initial conditions. Here $x=(x_1,x_2)$, $u=(u_1,u_2)$, and $\Omega\subset\R^2$ is a compact convex set with $0\in\mathrm{int}\,\Omega$. Put $\dot x=y$. The Pontryagin maximum principle gives $\mathcal{H}=p_1y_1 + p_2y_2 + q_1u_1 + q_2u_2$, where $p$ and $q$ are conjugate variables to $x$ and $y$ correspondingly. Since $\dot p=-\mathcal{H}_x$ and $\dot q=-\mathcal{H}_y$, we have $p=p_0=\const$ and $q=-p_0t + q_0$. Hence, for a.e.\ $t$, the control $u(t)$ moves along $\partial\Omega$ and the supporting half-plane to $\Omega$ at $u(t)$ is determined by $q(t)$. The main difficulty here is to describe this motion conveniently.

From the integrability point of view, we have a Hamiltonian system with 4 degrees of freedom, and it is easy to find 4 independent first integrals: $p_1$, $p_2$, $E=[p\times q]=p_1q_2-p_2q_1$, and the following nonsmooth Hamiltonian:
\[
	H=\max_{u\in\Omega}\mathcal H = p_1y_2 + p_2y_2 + s_\Omega(q_1,q_2).
\]
\noindent Obviously, the first integrals $p_1$, $p_2$, and $E$ are in involution. So, if $H$ is a smooth function outside $q_1=q_2=0$, then the system is integrable by Liouville-Arnold theorem. Unfortunately, $H$ is nonsmooth outside $q_1=q_2=0$ if $\Omega$ is not strongly convex. Moreover, to construct angle-momentum coordinates one should compute some explicit integration along basic loops on common level surfaces of the listed first integrals, which is not very simple procedure. Nonetheless, we are able to compute convenient formulae for extremals in terms of convex trigonometry. Put
\[
	q_1 = R\cos_{\Omega^\polar}\theta^\polar
	\quad\mbox{and}\quad
	q_2 = R\sin_{\Omega^\polar}\theta^\polar,
\]
\noindent where $R=s_\Omega(q_1,q_2)=H-p_1y_1-p_2y_2\ge 0$ is not constant along an extremal (in contrary to all previously considered problems). Nonetheless, the angle $\theta^\polar$ depends (locally) Lipschitz continuously on $t$ while $R\ne 0$. Using the generalized Pythagorean identity (see Subsec.~\ref{subsec:CT_properties}), we obtain $u_1=\cos_\Omega\theta$ and	$u_2=\sin_\Omega\theta$ for some $\theta\leftrightarrow\theta^\polar$ if $R\ne 0$, since $q_1u_1+q_2u_2=R$. So we need to compute $\theta^\polar(t)$ to find the control~$u$. Obviously,
\[
	E = [p\times q] = R(p_1\sin_{\Omega^\polar}\theta^\polar - p_2\cos_{\Omega^\polar}\theta^\polar).
\]

We start with the simplest case $E=0$, i.e.,\ when $p\parallel q$. If $p=0$, then $q=\const$ and $\theta^\polar=\const$. Suppose $p\ne 0$. Then there exists a unique instant $t_0$, such that $q(t_0)=0$. Hence, $\theta^\polar(t)$ takes only two possible values -- one for $t<t_0$ and another for $t>t_0$. Both are solutions of the equation $p_2\cos_{\Omega^\polar}\theta^\polar-p_1\sin_{\Omega^\polar}\theta^\polar=0$.

Now suppose that $E\ne 0$. In this case, let us express $R$ in $\theta^\polar$ by computing $\dot\theta^\polar$. The angle derivative formula in polar change of coordinates is the same in convex and classical trigonometries (see the inverse polar change of coordinates in Subsec.~\ref{subsec:CT_properties}). Hence, $\dot\theta^\polar = (q_1\dot q_2-\dot q_1q_2)/R^2 = E/R^2$, and we obtain the following form of first integral $E$ for the Pontryagin system:
\begin{equation}
\label{eq:plane_dynamic_integral}
	E\dot\theta^\polar = (p_2\cos_{\Omega^\polar}\theta^\polar - p_1\sin_{\Omega^\polar}\theta^\polar)^2.
\end{equation}
\noindent Hence, if $\Omega^\polar$ has $C^k$ boundary, then $\theta^\polar(t)\in C^{k+1}$ by Proposition~\ref{prop1}. Integrating equation~\eqref{eq:plane_dynamic_integral}, we obtain
\[
	t/E = \int \frac{d\theta^\polar}{(p_2\cos_{\Omega^\polar}\theta^\polar - p_1\sin_{\Omega^\polar}\theta^\polar)^2}.
\]

Let us consider in detail two important shapes of $\Omega^\polar$. If $\Omega^\polar$ is an ellipse (see Subsec.~\ref{subsec:explicit_sets}), then $\cos_{\Omega^\polar}\theta^\polar = a\cos s + x_1^0$ and $\sin_{\Omega^\polar}\theta^\polar = b\sin s + x_2^0$ where ${\theta^\polar}'_s = ab + ax_2^0\sin s + b x_1^0\cos s$. Thus,
\[
	t/E = \int
	\frac{ab + ay_0\sin s + b x_0\cos s}
	{\bigl(ap_2\cos s  - bp_1\sin s + p_2x_1^0 - p_1x_2^0\bigr)^2}
	ds.
\]
\noindent The last integral can be easily taken in elementary functions. Moreover,
\[
	u_1=\cos_\Omega\theta = \frac{d\sin_{\Omega^\polar}\theta^\polar}{d\theta^\polar}=
	\frac{d(b\sin s + x_2^0)}{ds}\frac{ds}{d\theta^\polar}=
	\frac{b\cos s}{ab + ax_2^0\sin s + b x_1^0\cos s},
\]
\[
	u_2=\sin_\Omega\theta = \frac{-d\cos_{\Omega^\polar}\theta^\polar}{d\theta^\polar}=
	\frac{d(-a\cos s - x_1^0)}{ds}\frac{ds}{d\theta^\polar}=
	\frac{a\sin s}{ab + ax_2^0\sin s + b x_1^0\cos s}.
\]

If $\Omega^\polar$ is a polygon, then $\cos_{\Omega^\polar}\theta^\polar$ and $\sin_{\Omega^\polar}\theta^\polar$ depend piecewise linearly on $\theta^\polar$, so equation~\eqref{eq:plane_dynamic_integral} can be easily integrated in elementary functions as well.

\section*{Conclusion}

Summarizing, we have derived formulae for extremals in a series of optimal control problems with two-dimensional control lying in an arbitrary compact convex set $\Omega\subset \R^2$ with $0\in\mathrm{int}\,\Omega$. 
Until now, formulae for extremals in these problems were known only for the case $\Omega$ being ellipse cetered at the origin (there is a small amount of exceptions mentioned in the Introduction).
Investigation of optimal control problems involves a lot of work in addition to deriving formulae for extremals (such as verification of second order conditions, computing conjugate and cut times, finding closed trajectories and etc.). Nonetheless, precise investigations of optimal syntheses are almost always based on handy solutions to the Pontryagin maximum principle. Thus, we believe that the results obtained in the present paper open up a wide opportunity for precise investigations in considered optimal control problems, which were previously impossible.

\section*{Acknowledgement}
The authors thank anonymous reviewers whose comments on presentation were taken into account.


\end{document}